\pgfplotsset{compat=1.5}
\newtheorem{theorem}{Theorem}
\newtheorem{proposition}{Proposition}[section]
\newtheorem{corollary}[proposition]{Corollary}
\newtheorem{lemma}[proposition]{Lemma}
\theoremstyle{remark}
\newtheorem*{remark}{Remark}
\newtheorem*{notation}{Notation}
\theoremstyle{definition}
\newtheorem*{definition}{Definition}
\newtheorem{example}[proposition]{Example}
\renewcommand{\MR}[1]{}
\newcommand{\TODO}[1]%
{\par\fbox{\begin{minipage}{0.9\linewidth}\textbf{TODO:} #1\end{minipage}}\par}
\let\Re\relax
\DeclareSymbolFont{cmcal}{OMS}{cmsy}{m}{n}
\DeclareSymbolFontAlphabet{\mathcal}{cmcal}
\newcommand{\phiop}{\mathop{\phi}\nolimits}
\newcommand{\PathGF}{\mathop{S}\nolimits}
\newcommand{\myBigl}{\biggl}
\newcommand{\myBigr}{\biggr}
\newcommand{\R}[0]{\mathbb{R}}
\newcommand{\Z}[0]{\mathbb{Z}}
\renewcommand{\P}[0]{\mathbb{P}}
\newcommand{\E}[0]{\mathbb{E}}
\newcommand{\V}[0]{\mathbb{V}}
\newcommand{\N}[0]{\mathbb{N}}
\DeclareMathOperator{\Re}{Re}
\DeclarePairedDelimiter{\abs}{\lvert}{\rvert}
\DeclarePairedDelimiter{\iverson}{\llbracket}{\rrbracket}
\title[]
{Ascents in non-negative Lattice Paths}
\author[B.~Hackl]{Benjamin Hackl}
\author[C.~Heuberger]{Clemens Heuberger}
\address[Benjamin Hackl, Clemens Heuberger]
{Institut f\"ur Mathematik,
  Alpen-Adria-Uni\-ver\-si\-t\"at Klagenfurt, Universit\"atsstra\ss e
  65--67, 9020 Klagenfurt, Austria}
\email{\href{mailto:benjamin.hackl@aau.at}{benjamin.hackl@aau.at}}
\email{\href{mailto:clemens.heuberger@aau.at}{clemens.heuberger@aau.at}}
\thanks{B.~Hackl and C.~Heuberger are supported by the Austrian
  Science Fund (FWF): P~24644-N26 and by the Karl Popper Kolleg
  ``Modeling-Simulation-Optimization'' funded by Alpen-Adria-Universit\"at Klagenfurt
  and by the Carinthian Economic Promotion Fund (KWF)}
\author[H.~Prodinger]{Helmut Prodinger}
\address[Helmut Prodinger]{Department of Mathematical
  Sciences, Stellenbosch University, 7602 Stellenbosch,
 South Africa}
\email{\href{mailto:hproding@sun.ac.za}{hproding@sun.ac.za}}
\thanks{H.~Prodinger is supported by an incentive grant of the
  National Research Foundation of South Africa}
\keywords{Lattice path, Łukasiewicz path, ascent, asymptotic analysis, implicit function, singular inversion}
\subjclass[2010]{05A16; 05A15, 68R05, 60C05}
\begin{document}

\maketitle
\begin{abstract}
  Non-negative Łukasiewicz paths are special two-dimensional lattice paths never passing
  below their starting altitude which have only one single special type of down
  step. They are well-known and -studied combinatorial objects, in particular
  due to their bijective relation to trees with given node degrees.

  We study the asymptotic behavior of the number of ascents (i.e., the number of maximal
  sequences of consecutive up steps) of given length for classical
  subfamilies of general non-negative Łukasiewicz paths: those with arbitrary ending altitude, those
  ending on their starting altitude, and a variation thereof. Our results include precise
  asymptotic expansions for the expected number of such ascents as well as for the
  corresponding variance.
\end{abstract}

\section{Introduction}\label{sec:introduction}

Two-dimensional lattice paths can be defined as sequences of points in the plane $\R^{2}$
where for any point, the vector pointing to the succeeding point (``step'') is from a
predefined finite set, the \emph{step set}. In general, lattice paths are very
classical combinatorial objects with a variety of applications in, amongst others,
Biology, Physics, and Chemistry.

In this paper, our focus lies on a special class of two-dimensional lattice paths:
\emph{non-negative simple \L ukasiewicz paths}. A lattice path is said to be \emph{simple}
if the horizontal coordinate is the same (e.g.\ is~1) for all
possible steps. In case of a simple path family, we define the \emph{step set}
$\mathcal{S}$ as the set of allowed height differences, i.e., the respective
$y$-coordinates between the points of the path. If, additionally, the step set
$\mathcal{S} \subseteq \Z$ is integer-valued and contains $-1$ as the single negative
value (meaning that all other values in $\mathcal{S}$ are non-negative), then the
corresponding paths are called \emph{simple \L ukasiewicz paths}.

If a lattice path starts at the origin and never passes below the horizontal axis, then
the path is said to be a \emph{meander} (or non-negative path). And in case such a
non-negative path ends on the horizontal axis, it is called an \emph{excursion}.

We are interested in analyzing the number of \emph{ascents} in these paths. An
\emph{ascent} is an inclusion-wise maximal sequence of up steps (i.e., steps in
$\mathcal{S}\setminus\{-1\}$; this might also include the horizontal step corresponding to
$0$). For an integer $r \geq 1$, if an ascent consists of precisely $r$
steps, then the ascent is said to be an $r$-ascent. As an example,
Figure~\ref{fig:paths-and-ascents} depicts some non-negative \L ukasiewicz excursion with
emphasized $2$-ascents.

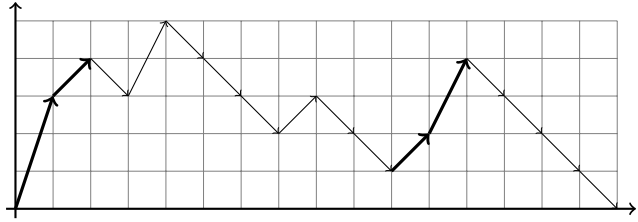
\begin{figure}[ht]
  \centering
  \begin{tikzpicture}[scale=0.5]
    \draw[help lines] (0,0) grid (16,5);
    \draw[thick, ->] (-0.25, 0) -- (16.5, 0);
    \draw[thick, ->] (0, -0.25) -- (0, 5.5);
    \draw[->, very thick] (0,0) -- (1, 3);
    \draw[->, very thick] (1,3) -- (2,4);
    \draw[->] (2,4) -- (3,3);
    \draw[->] (3,3) -- (4,5);
    \draw[->] (4,5) -- (5,4);
    \draw[->] (5,4) -- (6,3);
    \draw[->] (6,3) -- (7,2);
    \draw[->] (7,2) -- (8,3);
    \draw[->] (8,3) -- (9,2);
    \draw[->] (9,2) -- (10,1);
    \draw[->, very thick] (10,1) -- (11,2);
    \draw[->, very thick] (11,2) -- (12,4);
    \draw[->] (12,4) -- (13,3);
    \draw[->] (13,3) -- (14,2);
    \draw[->] (14,2) -- (15,1);
    \draw[->] (15,1) -- (16,0);
  \end{tikzpicture}
  \caption{Simple \L ukasiewicz excursion of length $16$ with emphasized $2$-ascents where
  $\mathcal{S} = \{-1, 1, 2, 3\}$}
  \label{fig:paths-and-ascents}
\end{figure}

In this paper, we give a precise analysis of the number of $r$-ascents for non-negative
simple \L ukasiewicz paths of given length, as well as of variants of this class of
lattice paths. Our investigation is motivated by
\cite{Kangro-Pourmoradnasseri-Theis:2016:ascents-dispersed-dyck}, where the number
of $1$-ascents in a special lattice path class related to the classic Dyck paths was analyzed
explicitly by elementary methods.

\subsection*{Main Results}
Within this paper, three special classes of non-negative \L ukasiewicz paths are of interest:
\begin{itemize}
\item \emph{excursions}, i.e., paths that end on the horizontal axis,
\item \emph{dispersed excursions}, i.e., excursions where horizontal steps are not allowed
  except on the horizontal axis,
\item \emph{meanders}, i.e., general non-negative \L ukasiewicz paths without additional
  restrictions.
\end{itemize}
Formally, we conduct our analysis by investigating random variables $E_{n,r}$, $D_{n,r}$,
$M_{n,r}$ which model the number of $r$-ascents in a random excursion, dispersed excursion, and
meander of length~$n$, respectively. The underlying probability models are based on 
equidistribution: within a family, all paths of length $n$ are assumed to be equally likely.

Given $r\in \N$ and considering $n\in\N_{0}$ with $n\to\infty$, we prove that for
excursions we have
\[ \E E_{n,r} = \mu n + c_{0} + O(n^{-1/2}) \qquad\text{ and }\qquad \V E_{n,r} =
  \sigma^{2}n + O(n^{1/2}),  \]
for some constants $\mu$, $c_{0}$, $\sigma^{2}$ depending on the chosen step set
$\mathcal{S}$. The constants are given explicitly in Theorem~\ref{thm:ascents:excursions}. 
Additionally, if $n$ is not a multiple of the so-called \emph{period} of
the step set, then the random variable degenerates and we have $E_{n,r} = 0$; see
Theorem~\ref{thm:ascents:excursions} for details.

For dispersed excursions, the corresponding computations get rather messy, which is why we
restrict ourselves to the investigation of $d_{n}$, the number of dispersed excursions of length~$n$,
as well as the expected value $\E D_{n,r}$. In particular, for all step sets
$\mathcal{S}$ (except for the special case of dispersed Dyck paths with $\mathcal{S} = \{-1,
1\}$), $d_{n}$ satisfies
\[ d_{n} = c_{0} \kappa^{n} n^{-3/2} + O(\kappa^{n} n^{-5/2}), \]
with constants $c_{0}$ and $\kappa$ depending on the chosen step set. For the
expected number of ascents in this particular lattice path family, we find
\[ \E D_{n,r} = \mu n + O(1)  \]
for some constant $\mu$ depending on $\mathcal{S}$. Explicit values for these constants
and more details are given in
Theorem~\ref{thm:dispersed:tau-not-1}.

In the context of meanders we are able to show that for all step sets (with two
special exceptions: Dyck meanders with $\mathcal{S} = \{-1, 1\}$, and Motzkin meanders with
$\mathcal{S} = \{-1, 0, 1\}$) we have
\[ \E M_{n,r} = \mu n + c_{0} + O(n^{5/2} \kappa^{n}) \qquad\text{ and }\qquad \V M_{n,r}
  = \sigma^{2}n + O(1), \]
for constants $\mu$, $c_{0}$, $\kappa\in (0,1)$, $\sigma^{2}$ depending on
$\mathcal{S}$. Also, the random variable $M_{n,r}$ is asymptotically normally
distributed; see Theorem~\ref{thm:meanders:tau-not-1} for explicit formulas for the
constants and more details.

In theory, our approach can be used to obtain arbitrarily precise asymptotic expansions
for all the quantities above. For the sake of readability we have chosen to only give the
main term as well as one additional term, wherever possible.

On a more technical note, in order to deal with general \L ukasiewicz step sets in our
setting, we make use of a
generating function approach (see~\cite{Flajolet-Sedgewick:ta:analy}). In particular, we
heavily rely on the technique of \emph{singular inversion}
\cite[Chapter VI.7]{Flajolet-Sedgewick:ta:analy}, which deals with finding an
asymptotic expansion for the growth of the coefficients of generating functions $y(z)$ satisfying
a functional equation of the type
\[ y = z \phiop(y) \]
with a suitable function $\phiop$.

\subsection*{Notation and Special Cases}
Throughout this paper, the step set will be denoted as $\mathcal{S} = \{-1, b_{1}, \dots,
b_{m-1}\}$ with integers $b_{j} \geq 0$ for all $j$ and $m\geq 1$. The $b_{j}$ are referred to as
up steps---even if the step is a horizontal one.

The so-called characteristic polynomial of the lattice path class,
i.e., the generating function corresponding to the set $\mathcal{S}$, is denoted by $\PathGF(u) :=
\sum_{s\in\mathcal{S}} u^{s}$. The strongly related generating function of the non-negative
steps is denoted by $\PathGF_{+}(u) := \sum_{\substack{s\in\mathcal{S}\\ s \geq 0}} u^{s}$.

In this context, observe that the particular step set $\mathcal{S} = \{-1, 0\}$ corresponds
to a, in some sense, pathological family of \L ukasiewicz paths. In this case, there is only
precisely one non-negative \L ukasiewicz path of any given length. The family of meanders
and excursions coincides, and also the random variables degenerate in the sense that we 
have\footnote{We make use of the Iversonian notation popularized 
in~\cite[Chapter 2]{Graham-Knuth-Patashnik:1994}: $\iverson{\mathit{expr}}$ takes value $1$
if $\mathit{expr}$ is true, and $0$ otherwise.}
$E_{n,r} = M_{n,r} = \iverson{n = r}$. Thus, further investigation of this case is not
required---which is why we exclude the case $\mathcal{S} = \{-1, 0\}$ from now on.

While in the case of a general step set $\mathcal{S}$ we are forced to deal with
implicitly given quantities, for special cases like $\mathcal{S} = \{-1, 1\}$ (Dyck
paths), everything can be made completely explicit as we will demonstrate in the course of
our investigations.

Finally, we make use of the following well-established notation: For a generating function
$f(z) = \sum_{n\geq 0} f_{n} z^{n}$, the coefficient belonging to $z^{n}$ is
denoted as $f_{n} = [z^{n}] f(z)$.

\subsection*{Structure of this Article}
In Sections~\ref{sec:gf-analytic} and~\ref{sec:gf-combinatorial}, we demonstrate two
different approaches to determine suitable generating functions required to analyze the
number of ascents. The approach in Section~\ref{sec:gf-analytic} is fully analytic and
fueled by the kernel method and the ``adding a new slice''-technique, see~\cite[Section
2.5]{Bona:Prodinger:2015:analyt}. The other approach in Section~\ref{sec:gf-combinatorial}
is a more combinatorial approach based on the inherent relation between \L ukasiewicz
paths and plane trees with given vertex degrees.
Formulas for the respective generating functions are given in
Propositions~\ref{prop:lukasiewicz-ascents} and~\ref{prop:gf-combinatorial}.

Then, in Section~\ref{sec:SA-inverse} we give a rigorous description of the singular
structure of a fundamental quantity, namely a particular inverse function derived in
Sections~\ref{sec:gf-analytic}
and~\ref{sec:gf-combinatorial}. Important tools for giving this description are provided
in Propositions~\ref{prop:singular-inversion-expansion}
and~\ref{prop:singularities-roots-of-unity}, which are extensions
of~\cite[Theorem VI.6; Remark VI.17]{Flajolet-Sedgewick:ta:analy}.

Section~\ref{sec:ascents} contains the actual analysis of ascents for the different
lattice path families mentioned above. In particular, in
Section~\ref{sec:ascents:excursions} we investigate excursions; the main result is stated
in Theorem~\ref{thm:ascents:excursions}. Section~\ref{sec:ascents:dispersed} deals with
the analysis of ascents in dispersed excursions. In this case, the expected number of
$r$-ascents for all but one given step sets is analyzed within
Theorem~\ref{thm:dispersed:tau-not-1}, and the analysis for the remaining one
is conducted in Proposition~\ref{prop:ascents-dispersed-dyck}.
Finally, Section~\ref{sec:ascents:meanders} contains our results for ascents in
meanders. Similarly to the previous section, the analysis for most step sets is given in
Theorem~\ref{thm:meanders:tau-not-1}, and the remaining cases are investigated in
Propositions~\ref{prop:meanders:dyck} and~\ref{prop:meanders:motzkin}.

Many of the calculations in this article are computationally
quite involved. To this end, we make use of the package for asymptotic
expansions~\cite{Hackl-Heuberger-Krenn:2016:asy-sagemath} contained in the open-source
mathematics software system
SageMath~\cite{SageMath:2017:8.1}. The worksheets used to produce our results can be found
at
\begin{center}
\url{https://benjamin-hackl.at/publications/lukasiewicz-ascents/},
\end{center}
in particular:
\begin{itemize}
\item \href{https://benjamin-hackl.at/downloads/lukasiewicz-ascents/lukasiewicz-excursions.ipynb}{\texttt{lukasiewicz-excursions.ipynb}} contains the calculations from Section~\ref{sec:ascents:excursions},
\item \href{https://benjamin-hackl.at/downloads/lukasiewicz-ascents/lukasiewicz-dispersed-excursions.ipynb}{\texttt{lukasiewicz-dispersed-excursions.ipynb}} for
  Section~\ref{sec:ascents:dispersed},
\item \href{https://benjamin-hackl.at/downloads/lukasiewicz-ascents/lukasiewicz-meanders.ipynb}{\texttt{lukasiewicz-meanders.ipynb}}
  for Section~\ref{sec:ascents:meanders}, and
\item
  \href{https://benjamin-hackl.at/downloads/lukasiewicz-ascents/utilities.py}{\texttt{utilities.py}},
  which has to be copied into the same folder as the others. This file contains utility
  code that is used in the notebook files.
\end{itemize}

\section{Generating Functions: An Analytic Approach}\label{sec:gf-analytic}
In this section we will introduce and discuss the preliminaries required in order to carry
out the asymptotic analysis of ascents in the different path classes. We begin by taking a
closer look at the structure of \L ukasiewicz paths.

Of course, the number of excursions of given length $n$ strongly depends on
the structure of the step set $\mathcal{S}$. For example, in the case of Dyck paths, i.e., $\mathcal{S} = \{-1,
1\}$, there cannot be any excursions of odd length---Dyck paths are said to be periodic
lattice paths.

\begin{definition}[Periodicity of lattice paths]
  Let $\mathcal{S}$ be a \L ukasiewicz step set with corresponding characteristic
  polynomial $\PathGF(u) = \sum_{s\in \mathcal{S}} u^{s}$. Then the period of $\mathcal{S}$ (and the
  associated lattice path family) is the largest integer $p$ for which a
  polynomial $Q$ satisfying
  \[ u\PathGF(u) = Q(u^{p})  \]
  exists. If $p = 1$, then $\mathcal{S}$ is said to be aperiodic, otherwise $\mathcal{S}$
  is said to be $p$-periodic.
\end{definition}
\begin{remark}
  Observe that if a step set $\mathcal{S}$ has period $p$, then there are only excursions
  of length $n$ where $n\equiv 0\pmod p$. This can be seen by considering the
  generating function enumerating unrestricted paths of length $n$ with respect to their
  height, i.e., $\PathGF(u)^{n}$. Obviously, the number of excursions of length $n$ is at most
  the number of unrestricted paths ending at altitude $0$, and the
  latter one can be written as
  \[ [u^{0}] \PathGF(u)^{n} = [u^{n}] (u \PathGF(u))^{n} = [u^{n}] Q(u^{p})^{n}.  \]
  Hence, if $n \not\equiv 0\pmod p$, there are no unrestricted paths ending on the
  horizontal axis---and thus also no excursions.
\end{remark}

With these elementary observations in mind, we can now focus on our main problem: determining a
suitable generating function in order to enumerate $r$-ascents in different classes of
nonnegative \L ukasiewicz paths. In this context, the well-known \emph{kernel method} will
prove to be an appropriate approach.

\begin{proposition}\label{prop:lukasiewicz-ascents}
  Let $F(z,t,v)$ be the trivariate ordinary generating function counting non-negative
  \L ukasiewicz paths with step set $\mathcal{S}$ starting at $0$, where $z$ marks the
  length of the path, $t$ marks the number of $r$-ascents, and $v$ marks the final
  altitude of the path. Then $F(z,t,v)$ can be expressed as
  \begin{equation}\label{eq:lukasiewicz-ascents:p-plus}
    F(z,t,v)  = \frac{v - V(z,t)}{v - z L(z,t,v)} L(z,t,v),
  \end{equation}
  where
  \[ L(z,t,v) = \frac{1}{1 - z\PathGF_{+}(v)} + (t-1) (z\PathGF_{+}(v))^{r},  \]
  and where $v = V(z,t)$ is the unique solution of the polynomial equation
  \begin{equation}\label{eq:lukasiewicz-v-equation}
    \big(v - z - z(t-1)(z\PathGF_{+}(v))^{r}\big)(1 - z\PathGF_{+}(v)) - z^{2}\PathGF_{+}(v) = 0
  \end{equation}
  that satisfies $V(0,1) = 0$. The function $V(z,t)$ is holomorphic in a neighborhood of
  $(z,t) = (0,1)$.
\end{proposition}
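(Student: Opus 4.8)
The plan is to obtain a functional equation for $F(z,t,v)$ via an ``adding a slice'' decomposition and then to solve it by the kernel method, the function $V(z,t)$ arising as the kernel root singled out by the implicit function theorem.

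First I would decompose a non-negative \L ukasiewicz path starting at $0$ uniquely as $U_{0}\,D\,U_{1}\,D\cdots D\,U_{k}$, where $D$ denotes a down step, $k\ge 0$ is the number of down steps, and each $U_{i}$ is a (possibly empty) maximal block of up steps; intermediate and terminal blocks may be empty (corresponding to consecutive down steps, resp.\ to a path ending with a down step), while $U_{0}$ can be empty only for the empty path, the non-negativity constraint being precisely what forbids a down step in the first position. Marking a block of $\ell$ up steps by $t$ exactly when $\ell=r$ (so that, since $r\ge 1$, the empty block remains unmarked) yields $\sum_{\ell\ge 0}(z\PathGF_{+}(v))^{\ell}+(t-1)(z\PathGF_{+}(v))^{r}=L(z,t,v)$ as the generating function of a single block. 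Reading the path from its end, it is either a single block $U_{0}$, or a non-negative path ending at altitude $\ge 1$, followed by one down step, followed by a final block; since a down step contributes $z/v$ and the intermediate part is counted by $F(z,t,v)-F(z,t,0)$, clearing the denominator in $v$ gives
\[ F(z,t,v)\bigl(v-zL(z,t,v)\bigr)=L(z,t,v)\bigl(v-zF(z,t,0)\bigr). \]

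Next I would study the kernel $v-zL(z,t,v)$. Writing $L(z,t,v)$ over the common denominator $1-z\PathGF_{+}(v)$ and multiplying through shows that $v=zL(z,t,v)$ is exactly the polynomial equation~\eqref{eq:lukasiewicz-v-equation} (recall that $\PathGF_{+}$ is a polynomial, so this is genuinely polynomial in $v$). Denoting its left-hand side by $\Phi(z,t,v)$, one checks $\Phi(0,1,0)=0$ and $\partial_{v}\Phi(0,1,0)=1\neq 0$, so the implicit function theorem supplies a unique $V(z,t)$, holomorphic in a neighborhood of $(z,t)=(0,1)$, with $V(0,1)=0$ and $\Phi(z,t,V(z,t))=0$; this establishes the claimed holomorphy. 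Since $\Phi(0,t,v)=v$ identically, $v=0$ is the only root of $\Phi(0,t,\cdot)$ near the origin while the remaining roots of $\Phi(z,t,\cdot)$ tend to infinity as $z\to 0$, so $V(z,t)$ is unambiguously ``the'' solution with $V(0,1)=0$. In particular $V(0,t)=0$, so $V(z,t)/z$ is a genuine power series.

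Finally I would apply the kernel method. As $V(z,t)$ has no constant term and $F(z,t,v)$ is a power series in $z$ whose coefficients are polynomials in $v$ and $t$, substituting $v=V(z,t)$ into the functional equation is legitimate as an identity of formal power series in $z$; it annihilates the kernel and leaves $L(z,t,V(z,t))\bigl(V(z,t)-zF(z,t,0)\bigr)=0$. Because $L(0,1,0)=1$, the factor $L(z,t,V(z,t))$ is a unit near $(0,1)$, whence $F(z,t,0)=V(z,t)/z$ (which, as a byproduct, identifies $V(z,t)/z$ as the generating function of excursions). Substituting this back and dividing by the kernel yields~\eqref{eq:lukasiewicz-ascents:p-plus}. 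I expect the algebra identifying $v=zL(z,t,v)$ with~\eqref{eq:lukasiewicz-v-equation} and the kernel-method bookkeeping to be routine; the genuinely delicate points are getting the combinatorial decomposition right---in particular the treatment of empty blocks and of maximality, so that $L(z,t,v)$ really is the generating function of one block and no path is counted twice---and the (standard) justification that substituting the kernel root $v=V(z,t)$ is admissible.
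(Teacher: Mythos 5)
Your proof is correct, but it reaches the functional equation by a genuinely different decomposition than the paper. The paper stratifies paths ending in a down step by their number of mountains, builds $\Phi^{(k+1)}$ from $\Phi^{(k)}$ via the ``adding a new slice'' substitution $v^{j}\mapsto\frac{z/v}{1-z/v}(v^{j}-z^{j})$ (which encodes a non-empty run of down steps and enforces non-negativity), and sums over $k$; the resulting equation has the boundary unknown $\Phi(z,t,z)$, i.e.\ an evaluation at $v=z$. You instead decompose an arbitrary meander at its \emph{last} down step, $P=Q\,D\,U_{k}$ with $Q$ non-negative ending at altitude $\ge 1$, which yields a single functional equation for $F$ itself with boundary unknown $F(z,t,0)$. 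Both routes produce the same kernel $v-zL(z,t,v)$ and the same application of the implicit function theorem, but yours is shorter: it avoids the auxiliary family $\Phi^{(k)}$, the summation over $k$, and the final step $F=\Phi L$, and it delivers $F(z,t,0)=V(z,t)/z$ as an immediate byproduct rather than as a separate corollary (the paper's Corollary~\ref{cor:V-combinatorial-interpretation}). Your bookkeeping of the ascent marking is sound: every maximal up-run in $U_{0}D U_{1}D\cdots D U_{k}$ is delimited by down steps or by the ends of the path, so the product of the blockwise factors $L$ marks exactly the $r$-ascents, including the trailing block of $Q$ (which stays maximal in $P$ because a down step follows). The one point you pass over quickly is the equivalence of $v=zL(z,t,v)$ with the cleared polynomial equation~\eqref{eq:lukasiewicz-v-equation}: this needs $1-z\PathGF_{+}(V(z,t))\neq 0$, which the paper verifies explicitly; in your setup it follows at once by continuity from the value $1$ at $(z,t,v)=(0,1,0)$, so the gap is cosmetic rather than substantive.
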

\begin{proof}
  Let $\Phi^{(k)}(z,t,v)$ denote the trivariate generating function enumerating
  non-negative \L ukasiewicz paths with respect to the step set $\mathcal{S}$
  with precisely $k$ mountains (i.e., with $k$ occurrences of
  the pattern $\nearrow\searrow$, where $\nearrow$ represents any of the allowed
  up steps) that end in a down step. The variables $z$, $t$, and $v$ mark path
  length, number of $r$-ascents, and the final altitude of the path,
  respectively.

  By definition of $\Phi^{(k)}(z,t,v)$, we have $\Phi^{(0)}(z,t,v) = 1$. We now construct
  $\Phi^{(k+1)}(z,t,v)$ from $\Phi^{(k)}(z,t,v)$ in order to establish a functional
  identity. Observe that an $r$-ascent occurs precisely if a sequence of $r$ upsteps
  followed by at least one downstep is added after a downstep. Then, observe that
  $L(z,t,v)$ as defined in the statement above
  is the generating function corresponding to a sequence of up steps (the first
  summand enumerates sequences of up steps without considering $r$-ascents; the
  second summand marks sequences of length $r$ with the variable $t$). Thus, $L(z,t,v) -
  1$ enumerates non-empty sequences of up steps. From there it
  is easy to see that the generating function $\Phi^{(k)}(z,t,v) (L(z,t,v) - 1)$
  enumerates paths with $k$ mountains and an appended non-empty sequence of up steps. The
  generating function $\Phi^{(k+1)}(z,t,v)$ can then be obtained by attaching another non-empty sequence of
  down steps. Formally, this can be achieved by substituting $v^{j}$ (which corresponds to
  a path ending at altitude $j$) with
  \[ v^{j} \mapsto \sum_{\ell=1}^{j} z^{\ell} v^{j-\ell} = \frac{z/v}{1 - z/v} (v^{j} -
    z^{j}).  \]
  In particular, this substitution is also applied for $j=0$, i.e., the paths
  with altitude $0$ (which correspond to $v^{0}$). In this case the substitution reads
  $v^{0} \mapsto 0$, which means that these paths get eliminated as we are
  not allowed to take a subsequent down step.
  Altogether, this gives the recurrence relation
  \begin{multline}\label{eq:phi-recurrence}
    \Phi^{(k+1)}(z,t,v) = \frac{z/v}{1 - z/v} \big(\Phi^{(k)}(z,t,v)(L(z,t,v) - 1) -
    \Phi^{(k)}(z,t,z)(L(z,t,z) - 1)\big),
  \end{multline}
  because carrying out the substitution gives the difference
  between $\Phi^{(k)}(z,t,v) (L(z,t,v) - 1)$ and the same term with $v$ replaced by $z$,
  multiplied with the factor $\frac{z/v}{1 - z/v}$.
  Observe that summing $\Phi^{(k)}(z,t,v)$ over $k \geq 0$ yields the generating function
  \[ \Phi(z,t,v) := \sum_{k\geq 0} \Phi^{(k)}(z,t,v)  \]
  enumerating paths with an arbitrary number of mountains that end on a down step. Thus, summation over $k\geq 0$
  in~\eqref{eq:phi-recurrence} proves that $\Phi(z,t,v)$ satisfies the functional equation
  \begin{equation}\label{eq:phi-functional}
    \Phi(z,t,v) \big(1 - \frac{z}{v-z} (L(z,t,v) - 1)\big) =
    1 - \frac{z}{v-z} \Phi(z,t,z)(L(z,t,z) - 1),
  \end{equation}
  where the summand $1$ on the right-hand side actually comes from $\Phi^{(0)}(z,t,v) = 1$.

  Rewriting the left-hand side of this equation after plugging in the definition of $L(z,t,v)$ yields
  \[ \Phi(z,t,v) \frac{(v-z - z(t-1)(z\PathGF_{+}(v))^{r})(1 - z\PathGF_{+}(v)) -
      z^{2}\PathGF_{+}(v)}{(v-z)(1 - z\PathGF_{+}(v))}.  \]
  We proceed in the spirit of the kernel method
  (see~\cite{Banderier-et-al:2002:generating-functions-trees,
    Prodinger:2003:kernel-examples}), which basically revolves
  around the idea of setting $v$ to a suitable root $V(z,t)$ of the polynomial in
  the numerator such that the numerator~\eqref{eq:lukasiewicz-v-equation} of the left-hand side disappears.

  The existence of such a function is guaranteed by means of the holomorphic implicit
  function theorem (see, e.g., \cite[Section
  0.8]{Kaup-Kaup:1983:holomorphic-several-variables}). In our case this theorem allows to
  conclude that in a sufficiently small neighborhood of $(z,t) = (0,1)$ there has to be a
  unique holomorphic function $V(z,t)$ satisfying $V(0,1) = 0$ such that the numerator in
  the left-hand side of~\eqref{eq:phi-functional} disappears by setting $v = V(z,t)$.

  At the same time, the denominator cannot vanish: For $t = 1$
  Equation~\eqref{eq:lukasiewicz-v-equation} defining $V(z,t)$ can be rewritten as $V(z,1)
  = z V(z,1) \PathGF(V(z,1))$, which allows us to obtain a power series expansion for $V(z,1)$ in the
  neighborhood of $z = 0$. As $V(z,t)$ is holomorphic and $V(z,1) \neq z$, we find $V(z,t)
  \neq z$ in a small neighborhood of $(z,t) = (0,1)$ by continuity. Thus, the first factor
  in the denominator does not vanish there. Simultaneously, concerning the second factor, if we had
  $z\PathGF_{+}(V(z,t)) = 1$, then the kernel equation~\eqref{eq:lukasiewicz-v-equation} would
  simplify to $-z$ (which is not identically $0$ in a neighborhood of $z=0$), meaning that $v = V(z,t)$ would no longer be a solution, in
  contradiction to its definition. Thus, the denominator does not vanish simultaneously
  with the numerator in a small neighborhood of $(z,t) = (0,1)$.

  Thus, we can use this implicitly defined function to find an expression for
  $\Phi(z,t,z)$---and then, after plugging that into~\eqref{eq:phi-functional} and
  doing some simplifications, we arrive at
  \[ \Phi(z,t,v) = \frac{v - V(z,t)}{v - z L(z,t,v)}. \]
  Then, in order to prove~\eqref{eq:lukasiewicz-ascents:p-plus}, recall that $\Phi(z,t,v)$
  enumerates all non-negative \L ukasiewicz paths ending on a down step
  $\searrow$. Thus, the generating function enumerating all non-negative \L ukasiewicz
  paths $F(z,t,v)$ can be obtained from $\Phi(z,t,v)$ by appending another
  (possibly empty) sequence of upsteps, and accounting for another possible $r$-ascent. This yields
  \[  F(z,t,v) = \Phi(z,t,v) L(z,t,v) \]
  and proves the statement.
\end{proof}

The combinatorial nature of $F(z,t,v)$ allows us to draw an interesting conclusion
with respect to the implicitly defined function $V(z,t)$.

\begin{corollary}\label{cor:V-combinatorial-interpretation}
  Let $V(z,t)$ be the implicitly defined function
  solving~\eqref{eq:lukasiewicz-v-equation} from
  Proposition~\ref{prop:lukasiewicz-ascents}. Assume that the underlying step set
  $\mathcal{S}$ has period $p$. Then $V(z,t)$ is analytic around the origin
  $(z,t) = (0,0)$ with power series representation
  \begin{equation}\label{eq:V-power-series}
    V(z,t) = \sum_{j\geq 0} g_{j}(t) z^{jp+1},
  \end{equation}
  where the $g_{j}(t)$ are polynomials with integer coefficients. Combinatorially,
  $V(z,t)/z$ is the bivariate generating function enumerating \L ukasiewicz excursions with
  respect to $\mathcal{S}$, where $z$ and $t$ mark the length of the path and the number
  of $r$-ascents within, respectively.
\end{corollary}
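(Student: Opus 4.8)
The plan is to establish the combinatorial interpretation first and then deduce the analyticity and the power series shape from it. Concretely, I would start from the functional equation satisfied by $V(z,1)$ that already appeared in the proof of Proposition~\ref{prop:lukasiewicz-ascents}, namely $V(z,1) = z V(z,1)\PathGF(V(z,1))$, and generalize it to arbitrary~$t$. Setting $t=1$ in the kernel equation~\eqref{eq:lukasiewicz-v-equation} collapses the $(t-1)$-term; for general~$t$ one has to keep it, and I would rewrite~\eqref{eq:lukasiewicz-v-equation} in the suggestive form $V = z\, \widetilde\Phi(V)$ where $\widetilde\Phi$ is an appropriate power series in~$V$ (and a polynomial in~$t$). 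The point is that this is exactly the Łukasiewicz correspondence: setting $y := V(z,t)/z$, the relation becomes a smooth functional equation $y = \phiop(zy)$-type recursion whose unique formal power series solution has nonnegative integer coefficients, because it is built from the combinatorial class of excursions decomposed by first return (or equivalently, by the classical bijection with plane trees having out-degrees in $\mathcal S$, with $z$ tracking the number of vertices and $t$ tracking the number of vertices all of whose relevant configuration is an $r$-ascent pattern).

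The second step is to pin down the exact combinatorial meaning of~$t$. Here I would invoke the trivariate generating function $F(z,t,v)$ from Proposition~\ref{prop:lukasiewicz-ascents}: since $F$ counts all meanders with $z,t,v$ marking length, number of $r$-ascents, and final altitude, the specialization $v \to$ (the relevant substitution extracting excursions) must return the excursion generating function with $z$ and $t$ retaining their meaning. In fact the cleaner route is to observe directly that $V(z,t)$ is characterized as the kernel root, and that the standard kernel-method identity forces $V(z,t)/z$ to equal $F$ restricted appropriately; alternatively, one argues purely combinatorially that excursions satisfy the first-return decomposition $\mathcal E = \{\varepsilon\} \cup \bigcup_{s\in\mathcal S, s\geq 0}(\text{ascent of height } s)\times \mathcal E^{s+1}\times\{\searrow\}$ with the $r$-ascent statistic behaving additively, and that the generating function of this decomposition satisfies precisely~\eqref{eq:lukasiewicz-v-equation} after dividing by~$z$. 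Either way, uniqueness of the formal solution with $V(0,t)=0$ (which holds because $\partial_v$ of the defining polynomial is nonzero at $v=0$, $z=0$, as already used via the implicit function theorem) identifies $V(z,t)/z$ with the excursion generating function.

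The third step handles analyticity and the power-series shape~\eqref{eq:V-power-series}. Analyticity at $(0,0)$ follows because $V(z,t)/z$ is a combinatorial generating function whose coefficients $[z^n t^k]$ are bounded by the total number $e_n$ of excursions of length~$n$, which grows at most exponentially (indeed $e_n \le \PathGF(1)^n$); hence the series converges in a polydisc, and since the $g_j(t)$ collect finitely many monomials in~$t$ for each~$j$, they are polynomials with integer (in fact nonnegative integer) coefficients. The period statement is then immediate from the Remark following the Definition: an excursion of length $n$ can exist only when $n \equiv 0 \pmod p$, and since $V(z,t)/z$ enumerates excursions by length, only powers $z^{jp}$ occur in $V(z,t)/z$, i.e.\ only powers $z^{jp+1}$ in $V(z,t)$ itself. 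I expect the main obstacle to be the bookkeeping in the second step: verifying that the $r$-ascent statistic tracked by~$t$ in the kernel equation~\eqref{eq:lukasiewicz-v-equation} really coincides with the $r$-ascent statistic in the first-return decomposition of excursions, since the $(t-1)(z\PathGF_{+}(v))^r$ correction term must be matched carefully against "an initial ascent of length exactly~$r$ contributes one extra marked ascent." Once that identification is pinned down, the rest is routine.
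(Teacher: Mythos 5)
Your ``cleaner route'' is exactly the paper's proof: set $v=0$ in $F(z,t,v)$ from Proposition~\ref{prop:lukasiewicz-ascents}, observe that the factor $L(z,t,v)$ cancels against the denominator so that $F(z,t,0)=V(z,t)/z$, and then read off the combinatorial interpretation from that of $F$; the periodicity of the exponents follows from the remark that no excursion of length $n$ exists unless $p\mid n$, and analyticity at $(0,0)$ plus integrality of the $g_j$ follow from the coefficient bound by the number of excursions, just as you argue. So the proposal is correct in substance and essentially coincides with the paper's argument, with the paper merely making explicit the one small computation you leave implicit (the cancellation of $L$ at $v=0$).

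One caveat on your alternative, ``purely combinatorial'' branch: the first-return decomposition as you wrote it, $(\text{ascent of height }s)\times\mathcal{E}^{s+1}\times\{\searrow\}$, is wrong for any step set other than $\{-1,1\}$ --- an initial up step of height $s$ must be compensated by $s+1$ down steps (one terminating each of the $s+1$ sub-pieces), so the correct factor is $(\mathcal{E}\times\{\searrow\})^{s+1}$, giving $A=\sum_{s\ge 0}zA^{s+1}+z\iverson{-1\in\mathcal{S}}=zA\PathGF(A)$ for the arch generating function $A=zE$; as written your version miscounts lengths (e.g.\ for $\mathcal{S}=\{-1,2\}$ it produces excursions of length $2$). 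That corrected combinatorial route is precisely what the paper carries out independently in Section~\ref{sec:gf-combinatorial} via the bijection with plane trees, so it is a viable alternative, but it needs the bookkeeping you yourself flagged as the main obstacle.
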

\begin{proof}
  Setting $v = 0$, i.e., ignoring all \L ukasiewicz paths not ending on the starting
  altitude, yields $F(z,t,0) = V(z,t)/z$ as the factor $L(z,t,v)$ in~\eqref{eq:lukasiewicz-ascents:p-plus} then cancels against
  the denominator. The combinatorial interpretation of the
  trivariate generating function $F(z,t,v)$ together with the fact that for a
  $p$-periodic step set $\mathcal{S}$ there are no \L ukasiewicz excursions of length $n$
  for $p\nmid n$ proves all the statements above.
\end{proof}

Now, with an appropriate generating function at hand let us discuss our approach
for the asymptotic analysis of the number of ascents in a nutshell.

Basically, we set $v=0$ to obtain a bivariate generating function enumerating \L
ukasiewicz excursions, and we set $v = 1$ to obtain a generating function enumerating \L
ukasiewicz meanders. The appropriate generating functions for the factorial moments of $E_{n,r}$
and $M_{n,r}$ (from which expected value and variance can be computed) are then obtained
by first differentiating the corresponding generating function with respect to $t$
(possibly more often than once) and then setting $t=1$ in this partial derivative. The
growth of the coefficients of this
function can then be extracted by means of singularity analysis.

In particular, this means that in order to compute the asymptotic expansions for the
quantities we are interested in, we only need more information on $V(z,1)$ as well as
the partial derivatives $\frac{\partial^{\nu}}{\partial t^{\nu}}V(z,t)\big|_{t=1}$.
\begin{notation}
  For the sake of simplicity, and because we will deal with these expressions throughout
  the entire paper, we omit the second argument in $V(z,t)$ in case $t = 1$, i.e., we set $V(z)
  := V(z,1)$, $V_{t}(z) := V_{t}(z,1) = \frac{\partial}{\partial t} V(z,t)|_{t=1}$, $V_{z}(z) := V_{z}(z,1) = \frac{\partial}{\partial z} V(z,t)|_{t=1}$, and so on.
\end{notation}

\begin{example}[Explicit $F(z,t,v)$]
  In the case of $\mathcal{S} = \{-1, 1\}$ and $r = 1$ the generating function
  $F(z,t,v)$ can be computed explicitly and we find
  \begin{multline}\label{eq:F-explicit-dyck}
    F(z,t,v) \\ = \frac{(1 + (t-1)vz(1-vz))((1 - 2vz)(1 - (t-1)z^{2}) - \sqrt{(1 -
        (t+3)z^{2})(1 - (t-1)z^{2})}}{2z(1 - (t-1)z^{2})(z - v + v^{2}z +
      vz^{2}(t-1)(1-z))}.
  \end{multline}
  In particular, for $v = 0$ (i.e., when we want to get the generating function for
  $1$-ascents in Dyck paths), we find
  \[ F(z,t,0) = \frac{1 - (t-1)z^{2} - \sqrt{(1 - (t+3)z^{2})(1 - (t-1)z^{2})}}{2z^{2}
    (1 - (t-1)z^{2})}.  \]
\end{example}

\section{Generating Functions: A Combinatorial Approach}\label{sec:gf-combinatorial}

The combinatorial interpretation of the implicitly defined function $V(z,t)$ solving the
kernel equation~\eqref{eq:lukasiewicz-v-equation} motivates the question whether there is a construction of $F(z,t,v)$ that is
derived from the underlying combinatorial structure, instead of finding this structure as
a side effect.

The following proposition describes an integral relation enabling a purely combinatorial
derivation of the generating function $F(z,t,v)$.

\begin{proposition}\label{prop:lukasiewicz-bijection}
  The excursions of \L ukasiewicz paths of length $n$ with respect to some step set
  $\mathcal{S}$ correspond to rooted plane
  trees with $n+1$ nodes and node degrees contained in the set $1 + \mathcal{S}$.

  An $r$-ascent in a \L ukasiewicz excursion with respect to the step set $\mathcal{S}$ corresponds to a rooted
  subtree such that the leftmost leaf in this subtree has height $r$, and additionally the
  root node of the subtree is not a leftmost child itself (in the original tree).
\end{proposition}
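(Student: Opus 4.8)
The plan is to derive the first assertion from the classical depth‑first encoding of plane trees by Łukasiewicz words, and then to push the definition of an ascent through this encoding, translating ``maximal block of up steps'' into a statement about preorder traversals.

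For the tree--path correspondence I would take a rooted plane tree $T$ with $n+1$ nodes, list its nodes $v_{1},\dots,v_{n+1}$ in preorder (children explored from left to right), and record for a node $v$ a step of size $d(v)-1$, where $d(v)$ is the outdegree of $v$. The requirement that all outdegrees lie in $1+\mathcal{S}$ is exactly what makes every recorded step lie in $\mathcal{S}$; moreover every strict prefix of the resulting word has non‑negative partial sum, and the full word sums to $(\text{edges})-(\text{nodes})=-1$. This is the standard Łukasiewicz encoding (see, e.g., \cite{Flajolet-Sedgewick:ta:analy}). Since the last node $v_{n+1}$ visited in preorder is always a leaf, the final recorded step is $-1$; discarding it produces a length‑$n$ path that stays non‑negative and returns to $0$, i.e.\ a Łukasiewicz excursion with step set $\mathcal{S}$. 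The inverse map appends a step $-1$ to such an excursion and reads off the unique plane tree it encodes; checking that these two maps are mutually inverse is routine.

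For the ascent statement I would first observe that, under this encoding, up steps (steps in $\mathcal{S}\setminus\{-1\}$) correspond exactly to internal nodes and the down step $-1$ to leaves, so an ascent corresponds to a maximal block $v_{i},v_{i+1},\dots,v_{i+r-1}$ of consecutive internal nodes in preorder. The two facts I would use about preorder traversals are: the node immediately after an internal node $u$ is the leftmost child of $u$; and the node immediately before a node $w$ is the (internal) parent of $w$ if $w$ is a leftmost child, and is otherwise the last preorder node---hence a leaf---of the subtree rooted at the sibling immediately preceding $w$. Iterating the first fact, $v_{i+1}$ is the leftmost child of $v_{i}$, $v_{i+2}$ the leftmost child of $v_{i+1}$, and so on, and right‑maximality forces $v_{i+r}$ (the leftmost child of $v_{i+r-1}$) to be a leaf; thus $v_{i+r}$ is precisely the leftmost leaf of the subtree rooted at $v_{i}$ and lies at height $r$ in it. Left‑maximality says that $v_{i-1}$, if present, is a leaf, which by the second fact is equivalent to $v_{i}$ not being a leftmost child; and if $v_{i-1}$ is absent then $v_{i}$ is the root of $T$, which is (vacuously) not a leftmost child. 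For the converse I would start from a subtree rooted at a node $w$ that is the root of $T$ or not a leftmost child, with leftmost leaf at height $r\ge 1$ (so $w$ is internal), and check that its leftmost branch gives a left‑maximal block of $r$ consecutive internal nodes followed by a leaf. This yields the desired bijection between the $r$‑ascents of the excursion and the rooted subtrees of $T$ described in the statement, transporting the length $r$ to the height of the leftmost leaf.

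The routine part is the first assertion. The main obstacle is the bookkeeping in the second part: correctly matching the left end of a maximal block of up steps---i.e.\ ``the preceding step is a down step, or there is no preceding step''---with the tree‑side condition ``the root of the subtree is not a leftmost child (or is the root of $T$)'', treating the root of $T$ as a boundary case, and verifying that the map between ascents and subtrees is genuinely a bijection preserving~$r$.
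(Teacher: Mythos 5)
Your proposal is correct and follows essentially the same route as the paper: the preorder (depth-first) Łukasiewicz encoding recording outdegree minus one at each node, dropping the final forced down step, and then reading off ascents as maximal runs of internal nodes along leftmost-child chains. If anything, your treatment of the ascent--subtree correspondence (including the left-maximality versus ``not a leftmost child'' matching and the root boundary case) is more explicit than the paper's, which settles that part by an illustrative figure.
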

\begin{proof}
  As pointed out in e.g.~\cite[Example 3]{Banderier-Flajolet:2002:lat-path}, this
  bijection between rooted plane trees with given node degrees and \L ukasiewicz
  excursions is well known. See~\cite[Section 11.3]{Lothaire:1997:comb-words} for an
  approach using words. However, as this bijection and its consequences makes up an
  integral part of the argumentation within this paper, we present a short proof
  ourselves. Furthermore, proving the bijection allows us to find the substructure in the
  tree corresponding to an $r$-ascent.

  Given a rooted plane tree $T$ consisting of $n$ nodes whose outdegrees are contained in $1 +
  \mathcal{S}$, we construct a lattice path as follows: when traversing the tree in
  preorder\footnote{Traversing a tree in preorder corresponds to the order in which the
    nodes are visited when carrying out a depth-first search on it.}, if passing a node
  with outdegree $d$, take a step of height $d-1$. The
  resulting lattice path thus consists of $n$ steps, and always ends on altitude $-1$,
  which follows from
  \[ \sum_{v\in T} (\deg(v) - 1) = \sum_{v\in T} \deg(v) - n = (n-1) - n = -1, \]
  where $\deg(v)$ denotes the outdegree (i.e., the number of children) of a node $v$ in the tree $T$.
  In particular, observe that by taking the first $n-1$ steps of the lattice path, we
  actually end up with a \L ukasiewicz excursion using the steps from $\mathcal{S}$. To
  see this, first observe that as the last node traversed in preorder certainly is a leaf,
  meaning that the $n$th step in the corresponding lattice path is a down step. As the
  path ends on altitude $-1$ after $n$ steps, we have to arrive at the starting altitude
  after $n-1$ steps.

  Furthermore, as illustrated in
  Figure~\ref{fig:lukasiewicz-bijection}, adding one to the current height of the
  constructed lattice path gives the size of the stack remembering the children that still
  have to be visited while traversing the tree in preorder. Combining the two previous
  arguments proves that the first $n-1$ steps in the constructed lattice path form a
  \L ukasiewicz excursion.

  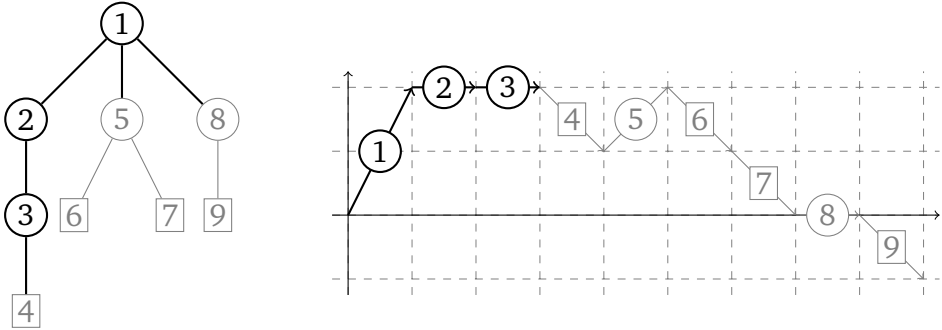
\begin{figure}[ht]
    \centering
    \begin{tikzpicture}[scale=0.85, inner sep=2pt]
      \node[draw, circle, thick] {1}
      child[thick] {node[draw, circle] {2} child {node[draw, circle] {3}
          child {node[draw, rectangle, thin, gray] {4}}}}
      child[thick] {node[draw, circle, thin, gray] {5}
        child[thin, gray] {node[draw, rectangle] {6}}
        child[thin, gray] {node[draw, rectangle] {7}}
      }
      child[thick] {node[draw, circle, thin, gray] {8} child[thin, gray] {node[draw, rectangle] {9}}};
    \end{tikzpicture}
    \hspace{1cm}
    \begin{tikzpicture}[scale=0.85, baseline=-1.5cm, inner sep=2pt]
      \draw[help lines, very thin, dashed] (-0.25, -1.25) grid (9.25, 2.25);
      \draw[->] (-0.25,0) -- (9.25, 0);
      \draw[->] (0, -1.25) -- (0, 2.25);
      \draw[->, thick] (0,0) to node[fill=white, draw, circle] {1} (1,2);
      \draw[->, thick] (1,2) to node[fill=white, draw, circle] {2} (2,2);
      \draw[->, thick] (2,2) to node[fill=white, draw, circle] {3} (3,2);
      \draw[->, gray] (3,2) to node[fill=white, draw, rectangle] {4} (4,1);
      \draw[->, gray] (4,1) to node[fill=white, draw, circle] {5} (5,2);
      \draw[->, gray] (5,2) to node[fill=white, draw, rectangle] {6} (6,1);
      \draw[->, gray] (6,1) to node[fill=white, draw, rectangle] {7} (7,0);
      \draw[->, gray] (7,0) to node[fill=white, draw, circle] {8} (8,0);
      \draw[->, gray] (8,0) to node[fill=white, draw, rectangle] {9} (9, -1);
    \end{tikzpicture}
    \caption[Bijection between \L ukasiewicz paths and trees with given node
    degrees]{Bijection between \L ukasiewicz paths and trees with given node degrees. The
      emphasized nodes and edges indicate the construction of the tree after the first three
    steps, which illustrates that the height of the \L ukasiewicz path is one less than
    the number of available node positions in the tree.}
    \label{fig:lukasiewicz-bijection}
  \end{figure}

  Similarly, by simply reversing the lattice path construction, a rooted plane tree of
  size $n+1$ with node degrees in $1 + \mathcal{S}$ can be constructed from any \L
  ukasiewicz excursion of length $n$ with respect to $\mathcal{S}$. This establishes the bijection between the two
  combinatorial families.

  Finally, Figure~\ref{fig:lukasiewicz-bijection-ascents} illustrates what $r$-ascents in
  \L ukasiewicz paths are mapped to by means of the bijection above.
  \begin{figure}[ht]
    \centering
    \begin{tikzpicture}[scale=0.6, inner sep=3pt,
      level 1/.style={sibling distance=3.25cm},
      level 2/.style={sibling distance=1.25cm},
      gray]
      \node[draw, circle] {}
      child[thick, black] {node[draw, circle] {}
        child {node[draw, rectangle] {}}
        child[thin, gray] {node[draw, circle] {}
          child[thick, black] {node[draw, circle] {}
            child {node[draw, rectangle] {}}
            child[thin, gray] {node[draw, rectangle] {}}
          }
        }
      }
      child {node[draw, circle] {}
        child[thick, black] {node[draw, circle] {}
          child {node[draw, rectangle] {}}
        }
        child {node[draw, rectangle] {}}
        child {node[draw, rectangle] {}}
      }
      child {node[draw, circle] {}
        child {node[draw, circle] {}
          child {node[draw, circle] {}
            child {node[draw, rectangle] {}}
            child {node[draw, circle] {}
              child[thick, black] {node[draw, circle] {}
                child {node[draw, rectangle] {}}
                child[thin, gray] {node[draw, rectangle] {}}
                child[thin, gray] {node[draw, rectangle] {}}
              }
            }
            child {node[draw, rectangle] {}}
            child {node[draw, circle] {}
              child[thick, black] {node[draw, circle] {}
                child {node [draw, rectangle] {}}
              }
              child {node[draw, rectangle] {}}
            }
          }
        }
      }
      child {node[draw, circle] {}
        child[thick, black] {node[draw, circle] {}
          child {node[draw, rectangle] {}}
          child[thin, gray] {node[draw, rectangle] {}}
        }
      };
    \end{tikzpicture}
    \caption[Emphasized $2$-ascents in a plane tree with 30 nodes]{Plane tree with $30$
      nodes bijective to some \L ukasiewicz excursion with respect to the step set
      $\mathcal{S} = \{-1, 0, 1, 2, 3\}$ whose number of $2$-ascents is $6$. The edges and
      nodes corresponding to the $2$-ascents are emphasized.}
    \label{fig:lukasiewicz-bijection-ascents}
  \end{figure}
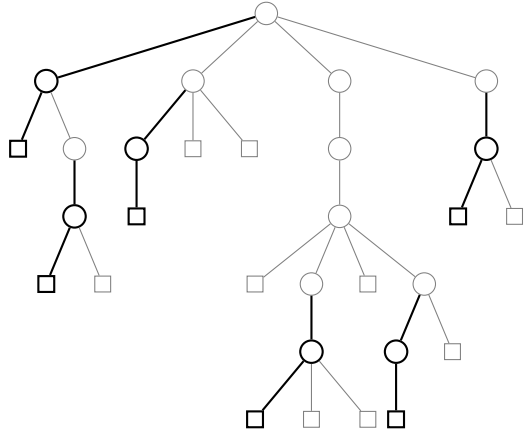
\end{proof}

In some sense, the bijection from Proposition~\ref{prop:lukasiewicz-bijection} can be seen as a generalization of the
well-known bijection between Dyck paths and binary trees where the tree is traversed in
preorder, internal nodes correspond to up steps and leaves to down steps.

The fact that there is this bijection between \L ukasiewicz excursions and these special
trees with given node degrees allows us to draw an immediate conclusion regarding the
corresponding generating functions.

\begin{corollary}\label{cor:lukasiewicz-gf-implicit}
  Let $V(z,t)$ be the generating function enumerating rooted plane trees with node degrees
  in $1 + \mathcal{S}$ where $z$ marks the number of nodes and $t$ marks the number of
  $r$-ascents in the corresponding \L ukasiewicz excursion. Then $V(z,t)/z$ enumerates
  \L ukasiewicz excursions with respect to $\mathcal{S}$ based on their length (marked by $z$) and the number of
  $r$-ascents (marked by $t$).

  Additionally, $V(z,t)$ satisfies the equations
  \begin{equation}\label{eq:V-combinatorial-func}
    V(0,t) = 0,\qquad V(z,t) = z L(z,t,V(z,t)),
  \end{equation}
  where
  \[ L(z,t,v) = \frac{1}{1 - z\PathGF_{+}(v)} + (t-1) (z\PathGF_{+}(v))^{r} \]
  enumerates sequences of up steps.
  In particular, $V(z) \coloneqq V(z,1)$, the ordinary generating function enumerating
  plane trees with node degrees in $1 + \mathcal{S}$ with respect to their size, satisfies
  \begin{equation}\label{eq:lukasiewicz-gf-implicit}
    V(0) = 0,\qquad V(z) = z V(z)\, \PathGF(V(z)).
  \end{equation}
\end{corollary}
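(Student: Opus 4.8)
The plan is to establish Corollary~\ref{cor:lukasiewicz-gf-implicit} in three steps: first, transport the combinatorial interpretation of $V(z,t)$ through the bijection of Proposition~\ref{prop:lukasiewicz-bijection}; second, derive the functional equation~\eqref{eq:V-combinatorial-func} by a symbolic decomposition of the tree family; and third, specialize to $t=1$ to recover~\eqref{eq:lukasiewicz-gf-implicit}. The first step is essentially immediate: Proposition~\ref{prop:lukasiewicz-bijection} gives a size-preserving bijection between \L ukasiewicz excursions of length~$n$ and rooted plane trees with $n+1$ nodes and outdegrees in $1+\mathcal{S}$, and it identifies $r$-ascents in the path with a specific type of rooted subtree. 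Hence if $V(z,t)$ counts such trees by number of nodes ($z$) and number of these distinguished subtrees ($t$), then $V(z,t)/z$ is exactly the bivariate generating function for excursions by length and $r$-ascents. This is also consistent with Corollary~\ref{cor:V-combinatorial-interpretation}, where the same object $V(z,t)/z$ was already shown (via the kernel method) to enumerate \L ukasiewicz excursions with those two statistics; so the two characterizations of $V(z,t)$ agree.

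For the second step I would decompose a tree by looking at the \emph{spine of leftmost children} emanating from the root. Reading the corresponding \L ukasiewicz excursion, the first maximal block of up steps of the path is exactly the sequence of outdegrees (minus one) encountered along this leftmost path, and by Proposition~\ref{prop:lukasiewicz-bijection} an $r$-ascent is precisely a maximal run of up steps of length $r$ whose starting point is \emph{not} at the very beginning of a such a block attached at a leftmost child. Translating this: a rooted plane tree with outdegrees in $1+\mathcal{S}$ is either a single node, or a root with some ordered children, where each child is itself such a tree but the leftmost child is ``special''. The factor $L(z,t,v)$ is designed to count exactly the contribution of one up-step sequence: the term $1/(1-z\PathGF_{+}(v))$ is a sequence of up steps where each up step $b_j\in\mathcal{S}$, $b_j\ge 0$, contributes $z\,v^{b_j}$ (the node contributes $z$; each of its $b_j$ ``non-leftmost'' children slots contributes $v$, to be filled by a subtree counted by $V$), and the correction term $(t-1)(z\PathGF_{+}(v))^{r}$ marks those runs of length exactly~$r$ with the extra variable $t$. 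Substituting $v = V(z,t)$ plugs the subtree generating function into each open slot, and multiplying by the outer $z$ accounts for the root, yielding $V(z,t) = z\,L(z,t,V(z,t))$; the boundary condition $V(0,t)=0$ holds since every nonempty tree has at least one node. I would present this carefully as a symbolic-method argument, matching it against the recursive ``adding a slice'' construction of $\Phi^{(k)}$ in the proof of Proposition~\ref{prop:lukasiewicz-ascents}.

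The third step is a short specialization: setting $t=1$ kills the correction term, so $L(z,1,v) = 1/(1-z\PathGF_{+}(v))$, and the equation becomes $V(z)\bigl(1 - z\PathGF_{+}(V(z))\bigr) = z$. Since $\PathGF(u) = u^{-1} + \PathGF_{+}(u)$, we have $u\PathGF(u) = 1 + u\PathGF_{+}(u)$, hence $z + zV(z)\PathGF_{+}(V(z)) = z\,V(z)\PathGF(V(z))$, which rearranges to $V(z) = z\,V(z)\,\PathGF(V(z))$ together with $V(0)=0$; alternatively this is just Corollary~\ref{cor:V-combinatorial-interpretation} evaluated at $t=1$ combined with the known tree equation. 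The main obstacle is not any computation but the \emph{bookkeeping} in the second step: making precise why exactly the leftmost-child slot must be excluded when marking $r$-ascents (so that no $r$-ascent is double-counted at a block boundary), and checking that the algebraic shape of $L(z,t,v)$ faithfully records ``a run of up steps, with runs of length exactly $r$ marked''. Once the correspondence between the $v$-slots and the non-leftmost children is stated cleanly, the functional equation falls out of the standard symbolic dictionary, and the rest is routine algebra.
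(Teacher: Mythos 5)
Your proposal is correct and follows essentially the same route as the paper: the first claim is read off from the bijection of Proposition~\ref{prop:lukasiewicz-bijection}, the functional equation comes from decomposing a tree along the spine of leftmost children (the paper's symbolic equation $\mathcal{V} = \circ \times \operatorname{SEQ}(\circ \times \sum_{s\geq 0}\mathcal{V}^{s})$, with the $(t-1)(z\PathGF_{+}(v))^{r}$ term marking spines of length exactly $r$), and the case $t=1$ follows from $u\PathGF(u) = 1 + u\PathGF_{+}(u)$. The only quibble is bookkeeping you already flagged yourself: the standalone factor $z$ in $V = z\,L(z,t,V)$ is most naturally the \emph{leftmost leaf} rather than the root (the leaf's outdegree $0 = 1+(-1)$ is not captured by $\PathGF_{+}$, so it cannot be one of the $\operatorname{SEQ}$ elements), but this does not change the resulting equation.
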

\begin{proof}
  The first part of this statement is an immediate consequence of the bijection
  from Proposition~\ref{prop:lukasiewicz-bijection}. In order to
  prove~\eqref{eq:V-combinatorial-func}, we observe that $\mathcal{V}$, the 
  combinatorial class of plane trees with vertex outdegrees in $1 + \mathcal{S}$, can be
  constructed combinatorially by means of the symbolic equation
  \[ \mathcal{V} = \tikz{\node[draw, circle, inner sep=3pt, baseline=0] {};} \times \operatorname{SEQ}\Bigl(\tikz{\node[draw, circle, inner sep=3pt,
      baseline=0] {};} \times \sum_{\substack{s\in \mathcal{S}\\ s \geq 0}}
    \mathcal{V}^{s} \Bigr). \]
  In a nutshell, this constructs trees in $\mathcal{V}$ by explicitly building the path to
  the leftmost leaf (the first factor in the equation above) in the tree as a sequence of
  nodes. Apart from a leftmost child, these nodes also
  have an additional $s \in \mathcal{S}$ branches, $s\geq 0$, where again a tree from
  $\mathcal{V}$ is attached. Considering that we obtain an $r$-ascent when using this
  construction with a sequence of length $r$, this is precisely what is enumerated by
  $L(z,t,V(z,t))$. Thus, the symbolic equation directly translates
  into the functional equation in~\eqref{eq:V-combinatorial-func}. The condition $V(0,t) =
  0$ is a consequence of the fact that there are no rooted trees without nodes.

  Setting $t = 1$ in~\eqref{eq:V-combinatorial-func} leads
  to~\eqref{eq:lukasiewicz-gf-implicit}. We also want to give a combinatorial proof
  of~\eqref{eq:lukasiewicz-gf-implicit}:

  \begin{figure}[ht]
    \centering
        \begin{tikzpicture}[scale=0.75]
      \node (add) {$\mathcal{V}\quad=\quad\sum\limits_{s\in\mathcal{S}}$};
      \node[right of=add, circle, draw,  xshift=4em, yshift=2em, inner sep=4pt] (right-V) {};
      \node[below of=right-V, yshift=-1.5em, xshift=-3em] (2) {$\mathcal{V}$};
      \node[below of=right-V, yshift=-1.5em, xshift=-1em]  (3) {$\mathcal{V}$};
      \node[below of=right-V, yshift=-1.5em, xshift=1em, gray] (4) {$\cdots$};
      \node[below of=right-V, yshift=-1.5em, xshift=3em] (5) {$\mathcal{V}$};
      \draw (right-V) -- (2) (right-V) -- (3) (right-V) -- (5);
      \draw[dotted, gray] (right-V) -- (4);
      \draw [thick, decoration={brace, mirror, raise=1em},
             decorate] (2.center) to node (h) {} (5.center);
      \node [below of=h] {$1+s$};
    \end{tikzpicture}
    \caption[Symbolic equation for the family of plane trees with given
    outdegrees]{Symbolic equation for the family of plane trees $\mathcal{V}$ with
      outdegrees in $1 + \mathcal{S}$. The generating function for $\mathcal{V}$ is
      $V(z)$, and the root node is enumerated by $z$.}
    \label{fig:functional-equation}
  \end{figure}

  The implicit equation follows from the observation that a tree with node degrees
  from $1 + \mathcal{S}$ can be seen as a root node (enumerated by $z$) where $1 + s$ for
  $s\in \mathcal{S}$ such trees are attached. Translating this into the language of
  generating functions via the symbolic equation illustrated in Figure~\ref{fig:functional-equation}, yields
  \[ V(z) = z \sum_{s\in \mathcal{S}} V(z)^{1 + s} = z V(z) \PathGF(V(z)).\qedhere \]
\end{proof}

The shape of the functional equation~\eqref{eq:lukasiewicz-gf-implicit}, which is an
immediate consequence of the recursive structure of the underlying trees, is rather
special. While it is tempting to cancel $V(z)$ on both sides of this equation, it is
better to leave it in the present form: on the one hand, $\PathGF(u)$ starts with the summand
$1/u$---and on the other hand, we require~\eqref{eq:lukasiewicz-gf-implicit} to be in this
special form $y = z \phiop(y)$ such that we can use singular inversion to obtain
the asymptotic behavior of the coefficients of the generating function $V(z)$. This is
investigated in detail in Section~\ref{sec:SA-inverse}.

The following proposition is the combinatorial counterpart to
Proposition~\ref{prop:lukasiewicz-ascents}.

\begin{proposition}\label{prop:gf-combinatorial}
  Let $F(z,t,v)$ be the trivariate ordinary generating function counting non-negative
  \L ukasiewicz paths with step set $\mathcal{S}$ starting at $0$, where $z$ marks the
  length of the path, $t$ marks the number of $r$-ascents, and $v$ marks the final altitude
  of the path. Then $F(z,t,v)$ can be expressed as
  \begin{equation*}
    F(z,t,v) = \frac{v - V(z,t)}{v - z L(z,t,v)} L(z,t,v),
  \end{equation*}
  where $V(z,t)$ and $L(z,t,v)$ are defined as in
  Corollary~\ref{cor:lukasiewicz-gf-implicit}. 
\end{proposition}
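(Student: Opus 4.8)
The plan is to deduce this from Proposition~\ref{prop:lukasiewicz-ascents} by showing that the function $V(z,t)$ introduced combinatorially in Corollary~\ref{cor:lukasiewicz-gf-implicit} (characterised by $V(0,t)=0$ and $V(z,t)=zL(z,t,V(z,t))$) is \emph{the same} function as the kernel root appearing in Proposition~\ref{prop:lukasiewicz-ascents} (the solution of \eqref{eq:lukasiewicz-v-equation} with $V(0,1)=0$). Since $L(z,t,v)$ is literally given by the same formula in both places, once the two incarnations of $V(z,t)$ are identified, the expression $\frac{v-V(z,t)}{v-zL(z,t,v)}\,L(z,t,v)$ is exactly the one already proven in Proposition~\ref{prop:lukasiewicz-ascents} to enumerate non-negative \L ukasiewicz paths, and nothing further is needed.

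Concretely, the first step is a short algebraic check: multiplying the equation $v = zL(z,t,v)$ by $1-z\PathGF_{+}(v)$ and rearranging gives
\[
  \bigl(1-z\PathGF_{+}(v)\bigr)\bigl(v - z(t-1)(z\PathGF_{+}(v))^{r}\bigr) - z = 0,
\]
which, after expanding, is precisely \eqref{eq:lukasiewicz-v-equation}; conversely, since $1-z\PathGF_{+}(v)$ is a unit near $(z,t,v)=(0,1,0)$, equation~\eqref{eq:lukasiewicz-v-equation} is equivalent there to $v=zL(z,t,v)$. The second step is to observe that the combinatorial $V(z,t)$ is holomorphic in a neighbourhood of $(0,1)$: this follows from the holomorphic implicit function theorem applied to $v - zL(z,t,v)$, whose partial derivative with respect to $v$ at $(0,1,0)$ equals $1$ (and matches the power-series description in Corollary~\ref{cor:V-combinatorial-interpretation}). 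Thus the combinatorial $V(z,t)$ is a holomorphic solution of \eqref{eq:lukasiewicz-v-equation} vanishing at $(0,1)$, and the uniqueness asserted in Proposition~\ref{prop:lukasiewicz-ascents} forces it to equal the kernel root; plugging this into \eqref{eq:lukasiewicz-ascents:p-plus} yields the claimed formula. The whole argument is essentially routine — there is no real obstacle, only the bookkeeping of the algebraic identity above.

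Alternatively, and more in keeping with the purely combinatorial spirit of this section, one can avoid any appeal to Section~\ref{sec:gf-analytic} and re-derive the formula directly: a last-passage (or ``adding a slice'') decomposition of a non-negative \L ukasiewicz path at the last visits to its successive altitudes expresses it in terms of \L ukasiewicz excursions (enumerated by $V(z,t)/z$ according to Corollary~\ref{cor:lukasiewicz-gf-implicit}), runs of up steps (enumerated by $L(z,t,v)$, with a possible extra $r$-ascent created at each splice), and a final run of up steps, and tracking the altitude variable $v$ reassembles $F(z,t,v)=\frac{v-V(z,t)}{v-zL(z,t,v)}\,L(z,t,v)$. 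The delicate point in that route is the correct accounting of the $r$-ascent statistic across the splice points and the degenerate boundary cases (in particular the excursion case $v=0$, where the leftmost piece of the decomposition supplies exactly the factor that cancels the geometric-series denominator); because the algebraic identification of the two versions of $V$ is cleaner and shorter, I would present that as the proof and merely remark that this combinatorial decomposition is available.
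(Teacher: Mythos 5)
Your proposal is correct, but it takes a genuinely different route from the paper. The paper proves Proposition~\ref{prop:gf-combinatorial} directly and combinatorially: it builds all unrestricted \L ukasiewicz paths ending in a down step as $\frac{1}{1-L(z,t,v)z/v}$, subtracts those that dip below the starting altitude by factoring them at the first illegal down step into (excursion $V(z,t)/z$) $\times$ (step $z/v$) $\times$ (unrestricted tail), obtains $\Phi(z,t,v)=\frac{v-V(z,t)}{v-zL(z,t,v)}$, and appends a final run of up steps — this is essentially your sketched ``alternative'' decomposition, and it is what makes Section~\ref{sec:gf-combinatorial} an independent derivation of $F(z,t,v)$. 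Your main route instead reduces everything to Proposition~\ref{prop:lukasiewicz-ascents} by identifying the combinatorial $V(z,t)$ of Corollary~\ref{cor:lukasiewicz-gf-implicit} with the kernel root of~\eqref{eq:lukasiewicz-v-equation}; the algebra is right (multiplying $v=zL(z,t,v)$ by $1-z\PathGF_{+}(v)$ does reproduce~\eqref{eq:lukasiewicz-v-equation} after expanding the $-z(1-z\PathGF_{+}(v))-z^{2}\PathGF_{+}(v)=-z$ cancellation), the implicit-function-theorem step is sound since $1-z\PathGF_{+}(v)$ is a unit near $(0,1,0)$, and the identification could even be had more cheaply by noting that Corollaries~\ref{cor:V-combinatorial-interpretation} and~\ref{cor:lukasiewicz-gf-implicit} both assert that $V(z,t)/z$ enumerates excursions by length and number of $r$-ascents, so the two power series coincide coefficientwise. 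What your route buys is brevity and a clean uniqueness argument; what it costs is the point of the section: the proposition becomes a corollary of the analytic kernel-method derivation rather than an independent combinatorial construction, whereas the paper's subtraction argument needs no appeal to Section~\ref{sec:gf-analytic} at all.
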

\begin{proof}
  It is not hard to see that by considering a sequence of paths enumerated by $L(z,t,v)$ followed by a single
  down step (the corresponding 
  generating function for this class is $\frac{1}{1 - L(z,t,v)\, z/v}$), any unrestricted
  \L ukasiewicz path with respect to $\mathcal{S}$ ending on a down step can be constructed.

  We want to subtract all paths that pass below the starting altitude in order to obtain
  the trivariate generating function $\Phi(z,t,v)$ enumerating just the non-negative
  \L ukasiewicz paths. The paths passing below the axis can be decomposed into an
  excursion enumerated by $V(z,t)/z$ (see Corollary~\ref{cor:lukasiewicz-gf-implicit}),
  followed by an (illegal) down step enumerated by $z/v$, and ending with an unrestricted
  path again. Thus, the paths to be subtracted are enumerated by
  \[ \frac{V(z,t)}{z} \frac{z}{v} \frac{1}{1 - L(z,t,v)\frac{z}{v}}. \]
  Therefore, we find
  \[ \Phi(z,t,v) = \frac{v - V(z,t)}{v - zL(z,t,v)}.  \]
  Keeping in mind that $\Phi(z,t,v)$ only enumerates those non-negative \L ukasiewicz paths
  ending on a down step $\searrow$, the generating function $F(z,t,v)$ enumerating all
  such paths
  can be obtained by appending another sequence of upsteps, i.e.,
  \[ F(z,t,v) = \Phi(z,t,v) L(z,t,v).  \]
  This proves the statement.\qedhere
\end{proof}

Now, as we have derived a suitable generating function both via an analytic as well as via
a combinatorial approach, we are interested in extracting information like, for example, asymptotic
growth rates from
$F(z,t,v)$. In order to do so, we need to have a closer look at the function $V(z,t)$,
which, as we have already seen in both of the previous approaches, plays a fundamental
role in the analysis of ascents.

\section{Singularity Analysis of Inverse Functions}\label{sec:SA-inverse}
The aim of this section is, on the one hand, to state and prove an extension of
\cite[Remark~VI.17]{Flajolet-Sedgewick:ta:analy}. In fact, we simply confirm
what is announced in the footnote in
\cite[p.~405]{Flajolet-Sedgewick:ta:analy} and give more details. Then, we use these
results in order to derive relevant information on the generating function $V(z,t)$ from
before.

For the following two propositions, we borrow the notation used in~\cite[Chapter
VI.7]{Flajolet-Sedgewick:ta:analy}.

\begin{proposition}\label{prop:singular-inversion-expansion}
  Let $\phiop(u)$ be analytic with radius of convergence $0<R\le\infty$,
  $\phiop(0)\neq 0$, $[u^n]\phiop(u)\ge 0$ for all $n\ge 0$ and
  $\phiop(u)$ not affine linear. Assume that there is a positive $\tau\in (0,
  R)$ such that $\tau\phiop'(\tau)=\phiop(\tau)$. Finally assume that
  $\phiop(u)$ is a $p$-periodic power series for some maximal $p$. Denote the set of all $p$th
  roots of unity by $G(p)$.

  Then there is a unique function $y(z)$ satisfying $y(z) = z \phiop(y(z))$ which is
  analytic in a neighborhood of
  $0$ with $y(0)=0$. It has radius of convergence $\rho=\tau/\phiop(\tau)$
  around the origin. For $\abs{z}\le \rho$, it has exactly singularities at
  $z=\rho\zeta$ for $\zeta\in G(p)$. For $z\to \rho$, we have the singular
  expansion
  \begin{equation*}
    y(z) \stackrel{z\to\rho}{\sim} \sum_{j\ge 0} (-1)^j d_j\myBigl(1-\frac{z}{\rho}\myBigr)^{j/2}
  \end{equation*}
  for some computable constants $d_j$, $j\ge 0$. We have $d_0=\tau$ and
  $d_1=\sqrt{2\phiop(\tau)/\phiop''(\tau)}$. Additionally, we have $[z^n]y(z)=0$ for
  $n\not\equiv 1\pmod p$.
\end{proposition}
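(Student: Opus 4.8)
The plan is to obtain everything from the classical smooth implicit-function / singular inversion machinery of \cite[Chapter VI.7]{Flajolet-Sedgewick:ta:analy}, extended to track the periodic structure. First I would set $\psi(u) := u/\phiop(u)$, which is analytic near $u=0$ with $\psi(0)=0$ and $\psi'(0)=1/\phiop(0)\neq 0$. Hence $z\mapsto\psi$ has a local analytic inverse $y(z)$ with $y(0)=0$, $y(z)=z\phiop(y(z))$; uniqueness among functions analytic at $0$ vanishing there is immediate since the equation forces the Taylor coefficients recursively. Since $\phiop$ has nonnegative coefficients and is not affine linear, $y$ has nonnegative coefficients and, by the standard argument, $\psi$ is strictly increasing on $(0,R)$ up to the point where $\psi'$ vanishes; the hypothesis $\tau\phiop'(\tau)=\phiop(\tau)$ says precisely $\psi'(\tau)=0$, and $\psi'(u)=(\phiop(u)-u\phiop'(u))/\phiop(u)^2$ has a \emph{simple} zero at $\tau$ because $\psi''(\tau) = -\phiop''(\tau)/\phiop(\tau)\neq 0$ (here $\phiop''(\tau)>0$ as $\phiop$ is a nonconstant power series with nonnegative coefficients and at least one term of degree $\ge 2$; the periodicity precludes $\phiop$ being affine linear or having only a degree-$\le 1$ nonlinear part — this needs a short check when $p\ge 2$). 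Therefore $\rho:=\psi(\tau)=\tau/\phiop(\tau)$ is the radius of convergence of $y$, and near $z=\rho$ we invert $z-\rho = \psi(y)-\psi(\tau) = \tfrac12\psi''(\tau)(y-\tau)^2 + \cdots$, a local branch-point of square-root type. This yields the Puiseux expansion $y(z)\sim\sum_{j\ge0}(-1)^j d_j(1-z/\rho)^{j/2}$ with $d_0=\tau$; matching the quadratic term gives $d_1^2\cdot\tfrac12|\psi''(\tau)|=\rho$, i.e. $d_1=\sqrt{2\phiop(\tau)/\phiop''(\tau)}$ after substituting $\psi''(\tau)=-\phiop''(\tau)/\phiop(\tau)$ and $\rho=\tau/\phiop(\tau)$. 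The sign conventions are fixed by requiring $y$ to increase along the real axis toward $\tau$.

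For the periodicity statements I would argue as follows. Write $\phiop(u)=u^{-a}\widehat\phi(u^p)$ where $a\in\{0,1,\dots\}$ is chosen so that $u^a\phiop(u)$ is a genuine power series in $u^p$ (the hypothesis that $\phiop$ is "$p$-periodic for some maximal $p$" should be read in this normalized sense, matching the paper's \L ukasiewicz convention where $\phiop(u)=u\PathGF(u)$ and $u\PathGF(u)=Q(u^p)$; in that case $a=0$, $\widehat\phi=Q$, but I would keep $a$ general). Then a power series $y$ with $y(0)=0$ solving $y=z\phiop(y)$ must satisfy $y^{a+1}=z\,\widehat\phi(y^p)$. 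Substituting the ansatz $y(z)=z^{1/p}w(z)$ with $w$ a power series in $z$ — legitimate because the leading term of $y$ is $z\phiop(0)^{\text{(adjusted)}}$ and one checks the recursion is consistent only with exponents $\equiv 1\pmod p$ — one sees that all exponents appearing in $y$ are $\equiv 1\pmod p$; concretely, $y(\omega z)=\omega\,y(z)$ for every $p$th root of unity $\omega\in G(p)$ as formal power series, since $\omega y(z)$ also solves the equation with $z$ replaced by $z$ (using $\phiop(\omega u)=\omega^{-?}\dots$ — more cleanly: $(\omega y)=(\omega z')\phiop(\omega y)$ reduces to $y=z'\phiop(y)$ after the periodic substitution, with $z'$ a suitable rotation of $z$). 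Hence $[z^n]y(z)=0$ for $n\not\equiv 1\pmod p$. The same symmetry $y(\omega z)=\omega y(z)$ transports the singularity at $\rho$ to singularities at $\rho\zeta$, $\zeta\in G(p)$; and conversely Pringsheim's theorem plus the fact that $\psi'$ has no zero in $(0,\tau)$ shows $\rho$ is the unique singularity on $|z|=\rho$ up to these rotations, so the singularities on $|z|\le\rho$ are \emph{exactly} the $\rho\zeta$.

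The main obstacle I anticipate is purely bookkeeping around the normalization: the statement says "$\phiop$ is a $p$-periodic power series," yet $\phiop(0)\neq 0$ means $\phiop$ is literally periodic (a series in $u^p$) only in the degenerate sense, whereas the intended object in applications is $u\phiop(u)$ or $u\PathGF(u)$. I would resolve this by stating at the outset the precise meaning of "$p$-periodic" being used — namely that $p$ is the largest integer such that $u\phiop(u)\in\C[[u^p]]$ — and then running the substitution $y=z^{1/p}w(z^p)$ cleanly once, which simultaneously (i) justifies the vanishing of coefficients off the residue class $1\bmod p$, (ii) produces the group symmetry $y(\zeta z)=\zeta y(z)$, and (iii) locates all boundary singularities. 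Everything else — existence, uniqueness, the square-root nature of the singularity at $\rho$, and the explicit values $d_0=\tau$, $d_1=\sqrt{2\phiop(\tau)/\phiop''(\tau)}$ — is then a direct citation of \cite[Theorem~VI.6]{Flajolet-Sedgewick:ta:analy} applied on the slit disk, with the constants read off by comparing the two leading terms in the local quadratic inversion of $\psi$.
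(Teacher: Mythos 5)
The heart of this proposition---and the reason the paper proves it at all instead of just citing \cite[Theorem~VI.6]{Flajolet-Sedgewick:ta:analy}---is the claim that on $\abs{z}\le\rho$ the \emph{only} singularities of $y$ are the $p$ points $\rho\zeta$, $\zeta\in G(p)$ (this is exactly the assertion that Flajolet--Sedgewick relegate to a footnote on p.~405). Your argument for this part, ``Pringsheim's theorem plus the fact that $\psi'$ has no zero in $(0,\tau)$ shows $\rho$ is the unique singularity up to rotations,'' does not deliver it. Pringsheim only tells you that the positive real point $z=\rho$ is singular, and the symmetry $y(\zeta z)=\zeta y(z)$ then transports that singularity to each $\rho\zeta$; neither ingredient excludes an \emph{additional} singularity at some $\rho e^{i\theta}$ with $e^{i\theta}\notin G(p)$. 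Ruling those out requires a genuinely new step: either a daffodil-lemma argument (for $\abs{z}=\rho$ with $z/\abs{z}\notin G(p)$ one shows the strict inequality $\abs{y(z)}<y(\rho)=\tau$ using that the support of $y$ lies in $1+p\Z$, whence $\phiop(y(z))-y(z)\phiop'(y(z))\neq 0$ and the implicit function theorem continues $y$ analytically across such points), or the paper's device: write $\phiop(u)=\chi(u^p)$ with $\chi$ aperiodic, check that $\psi(u):=\chi(u)^p$ is aperiodic and satisfies all hypotheses of Theorem~VI.6 including $\tau^p\psi'(\tau^p)=\psi(\tau^p)$, let $Y$ be the solution of $Y=Z\psi(Y)$ (which has $Z=\rho^p$ as its unique dominant singularity by the aperiodic theorem), and verify that $z\chi(Y(z^p))$ solves the original equation, hence equals $y$ and is singular in $\abs{z}\le\rho$ exactly where $z^p=\rho^p$. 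One of these two ideas is missing from your proposal, and without it the ``exactly'' in the statement is unproved.

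There are also smaller problems you should fix. First, the periodicity hypothesis should be read as $\phiop(u)=\chi(u^p)$ directly (this is consistent with $\phiop(0)\neq 0$, e.g.\ $1+u^2$ with $p=2$, and matches the application, where $\phiop(u)=u\PathGF(u)=Q(u^p)$); your proposed normalization ``$p$ is the largest integer such that $u\phiop(u)\in\C[[u^p]]$'' would force $p=1$ whenever $\phiop(0)\neq0$, and the ansatz $y(z)=z^{1/p}w(z)$ introduces exponents in $\tfrac1p+\Z$ rather than in $1+p\Z$; the correct substitution is $y(z)=z\,w(z^p)$. The symmetry argument itself is fine once you note that $\phiop(\zeta u)=\phiop(u)$ for $\zeta\in G(p)$, so $\zeta^{-1}y(\zeta z)$ solves the same equation and equals $y$ by uniqueness. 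Second, your local computation of $d_1$ contains a slip: with $\psi(u)=u/\phiop(u)$ one has $\psi''(\tau)=-\tau\phiop''(\tau)/\phiop(\tau)^2$ (using $\phiop(\tau)=\tau\phiop'(\tau)$), not $-\phiop''(\tau)/\phiop(\tau)$; only with the correct value does $d_1^2=-2\rho/\psi''(\tau)$ simplify to $2\phiop(\tau)/\phiop''(\tau)$.
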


\begin{proof}
  Existence, uniqueness, radius of convergence as well as singular expansion
  around $z\to\rho$ of $y(z)$ are shown in
  \cite[Theorem~VI.6]{Flajolet-Sedgewick:ta:analy}.

  As $\phiop$ is a $p$-periodic power series and $\phiop(0)\neq 0$, there exists an aperiodic
  function $\chi$ such that $\phiop(u)=\chi(u^p)$. From the non-negativity of the
  coefficients of $\phiop(u)$, it is clear that $\chi(u)$ has
  non-negative coefficients and is analytic for $\abs{u}<R^p$. We consider
  $\psi(u)\coloneqq \chi(u)^p$. Then $\psi$ is again analytic for
  $\abs{u}<R^p$, it has clearly non-negative coefficients, $\psi(0)\neq 0$ and
  $\psi(u)$ is not an affine linear function. If $[u^m]\chi(u)>0$ and $[u^n]\chi(u)>0$
  for some $m<n$, then $[u^{pm}]\psi(u)>0$ as well as
  $[u^{pm+(n-m)}]\psi(u)>0$, which implies that $\psi$ is aperiodic.

  Finally, we have
  \begin{equation*}
    \tau^p\psi'(\tau^p)=p\tau^p\chi(\tau^p)^{p-1}\chi'(\tau^p)=\tau \phiop(\tau)^{p-1}\phiop'(\tau)=\phiop(\tau)^p=\chi(\tau^p)^p=\psi(\tau^p).
  \end{equation*}

  Considering the functional equation $Y(Z)=Z\psi(Y(Z))$, we see that all
  assumptions of \cite[Theorem~VI.6]{Flajolet-Sedgewick:ta:analy} are
  satisfied; thus it has a unique solution $Y(Z)$ with $Y(0)=0$ which is
  analytic around the origin. By the same result, $Y(Z)$ has radius of
  convergence
  \begin{equation*}
    \frac{\tau^p}{\psi(\tau^p)}=\frac{\tau^p}{\chi(\tau^p)^p}=\myBigl(\frac{\tau}{\phiop(\tau)}\myBigr)^p=\rho^p
  \end{equation*}
  and, as $\psi$ is aperiodic, the only singularity of $Y(Z)$ with
  $\abs{Z}\le\rho^p$ is $Z=\rho^p$.

  \newcommand{\ytilde}{\widetilde y}
  We consider the function $\ytilde(z)\coloneqq z\chi(Y(z^p))$. By definition,
  it is analytic for $\abs{z}<\rho$ and its only singularities with
  $\abs{z}\le\rho$ are those $z$ with $z^p=\rho^p$, i.e., $z=\rho\zeta$ for
  $\zeta\in G(p)$. It is also clear by definition that $[z^n]\ytilde(z)=0$
  for $n\not\equiv 1\pmod p$. We have $\ytilde(0)=0$ and
  \begin{equation*}
    z\phiop(\ytilde(z))=z\chi((\ytilde(z))^p) = z\chi(z^p \chi(Y(z^p))^p) =
    z\chi(z^p \psi(Y(z^p)))=z\chi(Y(z^p))=\ytilde(z)
  \end{equation*}
  because $z^p \psi(Y(z^p))=Y(z^p)$ by definition of $Y$. This implies that
  $y=\ytilde$.
\end{proof}

While the following proposition is particularly useful in the context of the previous one,
it also holds in a slightly more general setting. It gives a detailed description of the
singular expansions for $p$-periodic power series like above.

\begin{proposition}\label{prop:singularities-roots-of-unity}
  Let $p$ be a positive integer and let $y$ be analytic with radius of convergence 
  $0 < \rho \leq \infty$, where $[z^{n}]
  y(z) = 0$ for $n\not\equiv 1\pmod p$. Assume that $y(z)$ has $p$ dominant singularites
  located at $\zeta \rho$ for $\zeta \in G(p)$, and that for some $L \geq 0$ and $z \to
  \rho$, we have the singular expansion
  \[ y(z) \stackrel{z\to\rho}{=} \sum_{j=0}^{L-1} d_{j} \myBigl(1 - \frac{z}{\rho}\myBigr)^{-\alpha_{j}} +
    O\myBigl(\myBigl(1 - \frac{z}{\rho}\myBigr)^{-\alpha_{L}}\myBigr), \]
  where $\alpha_{0}$, $\alpha_{1}$, \dots, $\alpha_{L}$ are complex numbers such that
  $\Re(\alpha_{j}) \geq \Re(\alpha_{j+1})$ for all $0\leq j < L$.

  Then, for $\zeta\in G(p)$, the singular expansion of $y(z)$ for $z\to \zeta\rho$ is
  given by
  \[ y(z) \stackrel{z\to\zeta\rho}{=} \sum_{j=0}^{L-1} \zeta d_{j} \myBigl(1 - \frac{z}{\zeta\rho}\myBigr)^{-\alpha_{j}}
    + O\myBigl(\myBigl(1 - \frac{z}{\zeta\rho}\myBigr)^{-\alpha_{L}}\myBigr), \]
  i.e., the expansion for $z \to \zeta\rho$ can be obtained by multiplying the expansion
  for $z\to \rho$ with $\zeta$ and substituting $z\mapsto \zeta/\rho$. Finally, for the
  coefficients of $y(z)$ we find
  \begin{equation}\label{eq:p-periodic-coefficients}
    [z^{n}] y(z) = \iverson{p \mid 1-n} [z^{n}] \myBigl(p \sum_{j=0}^{L-1} d_{j}
    \myBigl(1 - \frac{z}{\rho}\myBigr)^{-\alpha_{j}} + O\myBigl(\myBigl(1 -
    \frac{z}{\rho}\myBigr)^{-\alpha_{L}}\myBigr) \myBigr),
  \end{equation}
  which can be made explicit easily by means of singularity analysis (cf.~\cite[Chapter
  VI.4]{Flajolet-Sedgewick:ta:analy}). In particular,
  \[ [z^{n}] y(z) = \iverson{p \mid 1-n}\myBigl(\sum_{j=0}^{L-1} \frac{p
      d_{j}}{\Gamma(\alpha_{j})} n^{\alpha_{j} - 1} \rho^{-n} + O\big( n^{\Re(\alpha_{0}) -
      2} \rho^{-n} \big) + O\big(n^{\Re(\alpha_L) - 1} \rho^{-n}\big)\myBigr). \]
\end{proposition}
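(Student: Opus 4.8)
The plan is to reduce the statement about singularities at all $p$th roots of unity to the single given singularity at $z=\rho$ by exploiting the sparsity pattern $[z^n]y(z)=0$ for $n\not\equiv 1\pmod p$. First I would write $y(z) = z\,h(z^p)$ where $h$ is analytic for $\abs{w}<\rho^p$; this is exactly the content of the coefficient-support hypothesis, since $y(z)=\sum_{k\ge 0} y_{kp+1} z^{kp+1}$ and so $h(w)=\sum_{k\ge 0} y_{kp+1} w^{k}$. For a fixed $\zeta\in G(p)$ we then have $y(\zeta z) = \zeta z\, h(\zeta^p z^p) = \zeta z\, h(z^p) = \zeta\, y(z)$, which is the key functional symmetry. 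Consequently the behaviour of $y$ near $z=\zeta\rho$ is governed by the behaviour of $y$ near $z=\rho$: writing $z=\zeta z'$ with $z'\to\rho$, we get $y(z) = \zeta\, y(z')$, and since $1-\frac{z}{\zeta\rho} = 1-\frac{z'}{\rho}$, substituting the given singular expansion for $y(z')$ immediately yields
\[
  y(z) \stackrel{z\to\zeta\rho}{=} \sum_{j=0}^{L-1} \zeta d_{j} \myBigl(1 - \frac{z}{\zeta\rho}\myBigr)^{-\alpha_{j}}
    + O\myBigl(\myBigl(1 - \frac{z}{\zeta\rho}\myBigr)^{-\alpha_{L}}\myBigr),
\]
with the same $O$-constant. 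The only subtlety here is that the error term must genuinely be uniform, which it is since it is literally the same function composed with the rotation $z\mapsto\zeta z$; on a sector of the form $\abs{\arg(1-z/(\zeta\rho))}\le\theta$ near $\zeta\rho$ this corresponds to a sector near $\rho$ of the same opening, so the $O$-bound transports verbatim. This settles the first assertion.

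For the coefficient formula, the idea is that singularity analysis with multiple dominant singularities (\cite[Chapter~VI.4 and Theorem~VI.5]{Flajolet-Sedgewick:ta:analy}) says $[z^n]y(z)$ is the sum of the contributions from each singularity $\zeta\rho$, $\zeta\in G(p)$. The contribution from $\zeta\rho$ of a term $\zeta d_j(1-z/(\zeta\rho))^{-\alpha_j}$ is
\[
  \zeta d_j \cdot (\zeta\rho)^{-n} \frac{n^{\alpha_j-1}}{\Gamma(\alpha_j)}\bigl(1+O(n^{-1})\bigr)
  = \zeta^{1-n} \rho^{-n} \frac{d_j n^{\alpha_j-1}}{\Gamma(\alpha_j)}\bigl(1+O(n^{-1})\bigr),
\]
using the standard transfer theorem and the fact that a rotation of the singularity by $\zeta$ rotates the $\rho^{-n}$ asymptotics by $\zeta^{-n}$. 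Summing over $\zeta\in G(p)$ introduces the factor $\sum_{\zeta\in G(p)}\zeta^{1-n}$, which equals $p$ if $p\mid 1-n$ and $0$ otherwise — this is the orthogonality relation for roots of unity, and it reproduces precisely the prefactor $\iverson{p\mid 1-n}\,p$ in~\eqref{eq:p-periodic-coefficients}. (Of course, consistency demands that $[z^n]y(z)=0$ when $p\nmid 1-n$, which we already know, so this is a sanity check rather than new information.) Collecting the contributions of all $L$ terms gives the displayed formula, and the explicit version follows by the standard expansion $[z^n](1-z)^{-\alpha} = \frac{n^{\alpha-1}}{\Gamma(\alpha)}\bigl(1+O(n^{-1})\bigr)$ for $\alpha\notin\{0,-1,-2,\dots\}$, with the first error term $O(n^{\Re(\alpha_0)-2}\rho^{-n})$ coming from the subleading correction to the leading singular term and the second error term $O(n^{\Re(\alpha_L)-1}\rho^{-n})$ coming from the remainder $O((1-z/\rho)^{-\alpha_L})$ in the expansion.

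The main obstacle — to the extent there is one — is the bookkeeping of the error terms: one must make sure the big-$O$ in the singular expansion is valid on a $\Delta$-domain (a slit disk slightly larger than $\abs{z}\le\rho$ with small sectors removed around each $\zeta\rho$), so that the transfer theorem applies simultaneously at all $p$ dominant singularities, and one must confirm that the contribution of the circle $\abs{z}=\rho(1+\varepsilon)$ away from the singularities is exponentially smaller, i.e.\ $O(\rho^{-n}(1+\varepsilon)^{-n})$, which is absorbed. Since $y$ has exactly the $p$ stated dominant singularities and is analytic elsewhere on $\abs{z}\le\rho$, such a $\Delta$-domain exists and the argument of \cite[Chapter~VI.4]{Flajolet-Sedgewick:ta:analy} goes through with only cosmetic changes; the sparsity relation then both forces and is consistent with the $\iverson{p\mid 1-n}$ factor. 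No genuinely new analytic input beyond \cite{Flajolet-Sedgewick:ta:analy} is needed.
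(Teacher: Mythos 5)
Your proposal is correct and follows essentially the same route as the paper: factor $y(z)=z\chi(z^p)$ from the coefficient-support hypothesis, deduce the symmetry $y(\zeta z)=\zeta y(z)$ to transport the singular expansion from $\rho$ to each $\zeta\rho$, and then sum the transfer contributions of all $p$ dominant singularities using $\sum_{\zeta\in G(p)}\zeta^{1-n}=p\iverson{p\mid 1-n}$. Your additional remarks on $\Delta$-domains and uniformity of the error terms are just a more explicit rendering of the same argument.
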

\begin{proof}
  As $[z^{n}] y(z) = 0$ for $n \not\equiv 1 \pmod{p}$ there is a function $\chi$, analytic
  around the origin, such that $y(z) = z \chi(z^{p})$. Thus,
  for every $\zeta\in G(p)$, we have
  \begin{equation*}
    y(\zeta z) = \zeta z\chi((\zeta z)^{p}) = \zeta z \chi(z^{p}) = \zeta y(z)
  \end{equation*}
  or, equivalently,
  \begin{equation*}
    y(z)=\zeta y\myBigl(\frac{z}{\zeta}\myBigr).
  \end{equation*}
  Thus the singular expansion for $z\to\zeta\rho$ follows from that for
  $z\to\rho$ by replacing $z$ with $z/\zeta$ and multiplication by $\zeta$.

  With the singular expansions at all the dominant singularities located at $\zeta\rho$
  for $\zeta\in G(p)$ at hand, we are able to extract the overall growth of the
  coefficients of $y(z)$ by first applying singularity analysis to every expansion
  separately, and then summing up all these contributions. When doing so, we use the
  well-known property of roots of unity that
  \begin{equation}\label{eq:sum-of-zetas}
    \sum_{\zeta\in G(p)} \zeta^{m} = p \iverson{p\mid m}
  \end{equation}
  for $m\in\mathbb{Z}$ in order to rewrite the occurring sums as 
  $\sum_{\zeta\in G(p)} \zeta^{1-n} = p \iverson{p\mid 1-n}$. Comparing the resulting
  asymptotic expansion with~\eqref{eq:p-periodic-coefficients} proves the statement.
\end{proof}

The following result is a consequence of Propositions~\ref{prop:lukasiewicz-ascents}
(resp.\ Corollary~\ref{cor:lukasiewicz-gf-implicit}),
\ref{prop:singular-inversion-expansion}, and \ref{prop:singularities-roots-of-unity}. It
shows that actually we have more than enough information to carry out the asymptotic
analysis of the number of ascents, although in general we do not know the function $V(z,t)$
explicitly.

\begin{corollary}\label{cor:V-derivatives}
  Let $V(z,t)$ be the bivariate generating function from
  Corollary~\ref{cor:lukasiewicz-gf-implicit} and let $V(z) = V(z,1)$.
  \begin{enumerate}
  \item Let $\tau > 0$ be the uniquely determined positive constant satisfying
    $\PathGF'(\tau) = 0$. Then $V(z)$ has radius of convergence $\rho := 1/\PathGF(\tau)$ with a
    square-root singularity for $z\to \rho$. If $\mathcal{S}$ has period $p$, then the
    dominant singularities (i.e., singularities with modulus $\rho$) are located at
    $\zeta \rho$ with $\zeta \in G(p)$. The corresponding
    expansions are given by
    \begin{equation}\label{eq:v-expansion}
      V(z) \stackrel{z\to\zeta\rho}{=} \zeta\tau - \zeta\sqrt{\frac{2 \PathGF(\tau)}{\PathGF''(\tau)}}\, \myBigl(1
      - \frac{z}{\zeta\rho}\myBigr)^{1/2} - \zeta\frac{\PathGF(\tau) \PathGF'''(\tau)}{3 \PathGF''(\tau)^{2}}\, \myBigl(1 -
      \frac{z}{\zeta\rho}\myBigr) + O\myBigl(\myBigl(1 - \frac{z}{\zeta\rho}\myBigr)^{3/2}\myBigr).
    \end{equation}
  \item The evaluation of the partial derivatives $\frac{\partial^{\nu}}{\partial t^{\nu}}
    V(z,t)$ at $t = 1$ can be expressed in terms of~$V(z)$. For instance, the first
    partial derivative is given as
    \begin{equation}\label{eq:v-diff-1}
      V_{t}(z) = - z \frac{(V(z) - z)^{r}}{V(z)^{r+2} \PathGF'(V(z))}.
    \end{equation}
  \end{enumerate}
\end{corollary}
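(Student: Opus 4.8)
The plan for part~(1) is to apply Proposition~\ref{prop:singular-inversion-expansion} to the polynomial $\phiop(u) \coloneqq u\PathGF(u) = \sum_{s\in\mathcal{S}} u^{1+s}$; keeping~\eqref{eq:lukasiewicz-gf-implicit} in the form $V(z) = z\phiop(V(z))$ rather than cancelling $V(z)$ is precisely what makes this possible, since $\PathGF(u)$ has a simple pole at $u=0$ whereas $\phiop(0) = 1 \neq 0$ (because $-1\in\mathcal{S}$). First I would check the hypotheses of that proposition: $\phiop$ has $\{0,1\}$-valued, hence non-negative, coefficients; it is not affine linear, because excluding $\mathcal{S} = \{-1,0\}$ forces some $b_{j}\geq 1$ and thus a monomial $u^{1+b_{j}}$ of degree at least~$2$; the condition $\tau\phiop'(\tau) = \phiop(\tau)$ becomes, after differentiating $\phiop = u\PathGF$, equivalent to $\tau^{2}\PathGF'(\tau) = 0$, i.e.\ to $\PathGF'(\tau) = 0$, and since $\PathGF''(u) > 0$ for $u>0$ (already the summand $2u^{-3}$ forces this) the function $\PathGF'$ is strictly increasing from $-\infty$ to $+\infty$ on $(0,\infty)$, so such a $\tau$ exists and is unique; finally, $p$-periodicity of $\phiop$ in the required sense is literally the defining property $u\PathGF(u) = Q(u^{p})$ of the period of $\mathcal{S}$.

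Once the hypotheses are in place, Proposition~\ref{prop:singular-inversion-expansion} yields the radius of convergence $\rho = \tau/\phiop(\tau) = 1/\PathGF(\tau)$, the dominant singularities at $\zeta\rho$ for $\zeta\in G(p)$, and the singular expansion in powers of $(1-z/\rho)^{1/2}$ with $d_{0} = \tau$ and $d_{1} = \sqrt{2\phiop(\tau)/\phiop''(\tau)}$. To rewrite the first two expansion coefficients in terms of $\PathGF$, I would use $\PathGF'(\tau) = 0$ to get $\phiop(\tau) = \tau\PathGF(\tau)$ and $\phiop''(\tau) = \tau\PathGF''(\tau)$, which turns $-d_{1}$ into $-\sqrt{2\PathGF(\tau)/\PathGF''(\tau)}$. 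For the coefficient of $(1-z/\rho)$ I would locally invert $z = V/\phiop(V)$ near $V=\tau$: writing $F(v) = v/\phiop(v)$ one finds $F(\tau) = \rho$, $F'(\tau) = 0$, and (after using $\tau\phiop'(\tau) = \phiop(\tau)$ and then $\phiop = u\PathGF$, $\PathGF'(\tau) = 0$) $F''(\tau) = -\PathGF''(\tau)/\PathGF(\tau)^{2}$, $F'''(\tau) = -\PathGF'''(\tau)/\PathGF(\tau)^{2}$; solving $\rho - z = -\tfrac12 F''(\tau)(V-\tau)^{2} - \tfrac16 F'''(\tau)(V-\tau)^{3} + \cdots$ for $V-\tau$ as a series in $(1-z/\rho)^{1/2}$ then gives exactly $-\PathGF(\tau)\PathGF'''(\tau)/(3\PathGF''(\tau)^{2})$. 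The expansions at the remaining dominant singularities $\zeta\rho$ follow from Proposition~\ref{prop:singularities-roots-of-unity} --- or directly from the identity $V(\zeta z) = \zeta V(z)$, which holds because $[z^{n}]V(z) = 0$ for $n\not\equiv 1\pmod p$ --- by substituting $z\mapsto z/\zeta$ and multiplying by~$\zeta$.

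For part~(2), the plan is to differentiate the functional equation $V(z,t) = zL(z,t,V(z,t))$ from~\eqref{eq:V-combinatorial-func} with respect to~$t$. Implicit differentiation gives $V_{t}(z,t)\bigl(1 - zL_{v}(z,t,V)\bigr) = zL_{t}(z,t,V)$, hence, setting $t=1$ where the $(t-1)$-summand of $L$ and its $v$-derivative vanish, $V_{t}(z) = z(z\PathGF_{+}(V(z)))^{r}/\bigl(1 - z^{2}\PathGF_{+}'(V(z))/(1-z\PathGF_{+}(V(z)))^{2}\bigr)$. The simplification now rests on two identities: evaluating~\eqref{eq:V-combinatorial-func} at $t=1$ gives $V(z) = z/(1 - z\PathGF_{+}(V(z)))$, so that $1 - z\PathGF_{+}(V(z)) = z/V(z)$ and $z\PathGF_{+}(V(z)) = (V(z)-z)/V(z)$; and $\PathGF_{+}(u) = \PathGF(u) - u^{-1}$ gives $\PathGF_{+}'(u) = \PathGF'(u) + u^{-2}$. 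Substituting, the numerator becomes $z\bigl((V-z)/V\bigr)^{r}$ and the denominator $1 - V^{2}\PathGF_{+}'(V) = 1 - V^{2}\PathGF'(V) - 1 = -V^{2}\PathGF'(V)$, which reproduces~\eqref{eq:v-diff-1} and is nonzero near $z=0$ since $V^{2}\PathGF'(V)\to-1$ there. The same scheme handles the higher $t$-derivatives: differentiating the functional equation $\nu$ times produces a linear equation for $\partial_{t}^{\nu}V$ with the same coefficient $1 - zL_{v}$ and a right-hand side built from lower-order $t$-derivatives, from $z$-derivatives of $V$, and from $V$; and since $V_{z}(z) = -1/(z^{2}\PathGF'(V(z)))$ (obtained by differentiating $V = z\phiop(V)$ in $z$ and using $z\PathGF(V(z)) = 1$) is itself expressible through $V$, every such quantity reduces to $V(z)$.

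I expect the main obstacle to be purely the bookkeeping: carrying the abstract data $\rho, d_{0}, d_{1}, d_{2}, \dots$ of Proposition~\ref{prop:singular-inversion-expansion} through the substitution $\phiop = u\PathGF$ while consistently exploiting $\PathGF'(\tau) = 0$, and, in part~(2), eliminating $\PathGF_{+}$ in favour of $\PathGF$ everywhere via $u\PathGF_{+}(u) = u\PathGF(u) - 1$. The one conceptual subtlety is the point already flagged after~\eqref{eq:lukasiewicz-gf-implicit}: one must work with the equation in the form $y = z\phiop(y)$ for $\phiop = u\PathGF(u)$ rather than with $\PathGF$ itself, so that $\phiop(0)\neq 0$ and the singular-inversion theorem applies.
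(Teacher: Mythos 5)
Your proposal follows essentially the same route as the paper: part~(1) is obtained by applying Proposition~\ref{prop:singular-inversion-expansion} (and then Proposition~\ref{prop:singularities-roots-of-unity} for the singularities at $\zeta\rho$) to $\phiop(u)=u\PathGF(u)$, which is exactly why the functional equation is kept in the form $V=z\phiop(V)$, and part~(2) is obtained by implicit differentiation of $V=zL(z,t,V)$ in $t$, solving the resulting linear equation for $V_t$ and simplifying via $z\PathGF(V(z))=1$; your explicit verification of the hypotheses and the local-inversion computation of the $(1-z/\rho)$-coefficient are correct and in fact more detailed than the paper's proof. The only nitpick is the claim that $\PathGF'$ increases ``to $+\infty$'' on $(0,\infty)$ — if the only up step is $b=1$ the limit is finite — but since $\PathGF'$ is strictly increasing, tends to $-\infty$ at $0^{+}$, and is eventually positive, the existence and uniqueness of $\tau$ is unaffected.
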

\begin{proof} Let $F(z,t,v)$ be given as in the statement of
  Proposition~\ref{prop:lukasiewicz-ascents}.
  \begin{enumerate}
  \item The singular expansion of $V(z)$ for $z\to \zeta \rho$ follows from
    applying Propositions~\ref{prop:singular-inversion-expansion}
    and~\ref{prop:singularities-roots-of-unity} to our given context: Plugging in $t=1$
    in~\eqref{eq:lukasiewicz-v-equation} (or, alternatively, the combinatorial
    interpretation of $V(z,t)$ as stated in Corollary~\ref{cor:lukasiewicz-gf-implicit})
    proves that $V(z)$ satisfies the
    functional equation $V(z) = z \phiop(V(z))$ with $\phiop(u) = u \PathGF(u)$. For this particular
    $\phiop(u)$, the fundamental constant $\tau$ is defined as the unique positive real number
    satisfying $\PathGF'(\tau) = 0$. Then, Proposition~\ref{prop:singular-inversion-expansion}
    yields the singular structure as well as the singular expansion for $z\to\rho$ after
    checking that $\phiop(u)$ satisfies the necessary conditions---and indeed, we have
    $\phiop(0) = 1 \neq 0$, and for step sets other than $\mathcal{S} = \{-1,
    0\}$, $\phiop$ is also a nonlinear function. With the computed expansion for $z\to\rho$,
    we obtain~\eqref{eq:v-expansion} from
    Proposition~\ref{prop:singularities-roots-of-unity}.

  \item As a consequence of $V(z,t)$ being a bivariate generating function
    where the coefficient of $z^{n}$ is given by a polynomial in $t$ (see
    Corollary~\ref{cor:lukasiewicz-gf-implicit}), and as we know that $V(z) = V(z,1)$ has
    radius of convergence $\rho = 1/\PathGF(\tau)$, we obtain that $V(z,t)$ is analytic in a
    small neighborhood of $(z,t) = (0,1)$. This allows us to implicitly differentiate the
    functional equation~\eqref{eq:V-combinatorial-func} with respect to $t$. Within the
    implicit derivative of this equation, the partial derivative
    $\frac{\partial }{\partial t} V(z,t)$ only occurs linearly, so that we can solve for
    it.

    Equation~\eqref{eq:v-diff-1} can now be obtained by setting $t = 1$ and using the
    relation $z \PathGF(V(z)) = 1$  (see Corollary~\ref{cor:lukasiewicz-gf-implicit}).
    Higher-order partial derivatives can be obtained by differentiating again with respect
    to $t$ before setting $t = 1$.\qedhere
  \end{enumerate}
\end{proof}

These observations allow us to employ singularity analysis (see, e.g.,
\cite[Chapter VI]{Flajolet-Sedgewick:ta:analy}) in order to carry out a precise analysis
of the number of $r$-ascents in certain families of \L ukasiewicz paths in the following
sections.

We conclude this section with a very useful observation with respect to the nature of the
structural constant $\tau$.

\begin{lemma}\label{lem:tau-1}
  Let $\mathcal{S}$ be some\footnote{Recall that we excluded $\mathcal{S} = \{-1, 0\}$ in
    the introduction.} \L ukasiewicz step set and let $\tau$ be the corresponding
  structural constant, i.e.\ the unique positive number satisfying $\PathGF'(\tau) = 0$. Then
  $\tau \leq 1$ with equality if and only if $\mathcal{S} = \{-1, 0, 1\}$ or $\mathcal{S} = \{-1, 1\}$.
\end{lemma}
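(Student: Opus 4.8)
The plan is to recall that $\PathGF(u) = u^{-1} + \PathGF_+(u)$ where $\PathGF_+(u) = \sum_{s \in \mathcal{S},\, s\geq 0} u^s$, so that $\PathGF'(u) = -u^{-2} + \PathGF_+'(u)$, and the defining equation $\PathGF'(\tau) = 0$ becomes $\tau^2 \PathGF_+'(\tau) = 1$. Since $\tau > 0$ and all coefficients of $\PathGF_+$ are non-negative integers, this is a clean algebraic constraint. First I would write $\PathGF_+'(u) = \sum_{s \geq 1} a_s s u^{s-1}$ where $a_s = \iverson{s \in \mathcal{S}}$ for $s \geq 1$ (the constant term of $\PathGF_+$, if present, contributes nothing to the derivative). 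Evaluating at $u = 1$ gives $\PathGF_+'(1) = \sum_{s \geq 1} a_s s$, which is a sum of distinct positive integers, hence $\PathGF_+'(1) \geq 1$, with equality if and only if the only up step that is $\geq 1$ is the step $1$ itself, i.e.\ $\mathcal{S} \cap \{1, 2, \dots\} = \{1\}$.

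Next I would exploit monotonicity. The function $u \mapsto u^2 \PathGF_+'(u)$ is strictly increasing on $(0, R)$ (it has non-negative power series coefficients and is not identically constant once we have excluded $\mathcal{S} = \{-1,0\}$, since then some $a_s > 0$ for $s \geq 1$), and $\tau$ is its unique positive root of the equation $u^2\PathGF_+'(u) = 1$. Therefore $\tau \leq 1$ if and only if the value at $1$ satisfies $1^2 \cdot \PathGF_+'(1) = \PathGF_+'(1) \geq 1$, which we just showed always holds; and $\tau = 1$ exactly when $\PathGF_+'(1) = 1$, i.e.\ when $1$ is the only positive up step. The remaining case analysis is then: the positive-step part of $\mathcal{S}$ is $\{1\}$, and the only freedom is whether the horizontal step $0$ is present. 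This yields exactly $\mathcal{S} = \{-1, 1\}$ or $\mathcal{S} = \{-1, 0, 1\}$, matching the claim.

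I would be careful about one subtlety: one must confirm that $u \mapsto u^2\PathGF_+'(u)$ is genuinely strictly increasing (not merely non-decreasing) near $\tau$, so that the inequality $\tau \leq 1$ transfers faithfully to $\PathGF_+'(1) \geq 1$ and the equality case is sharp. This follows because, having excluded $\mathcal{S} = \{-1, 0\}$, there is at least one step $s \in \mathcal{S}$ with $s \geq 1$, so $u^2\PathGF_+'(u)$ contains the monomial $a_s s\, u^{s+1}$ with $s + 1 \geq 2 > 0$ and positive coefficient; hence it is strictly increasing on $(0, R)$. The main (and really only) obstacle is bookkeeping: making sure the presence or absence of the horizontal step $0$ is handled correctly (it affects $\PathGF_+$ but not $\PathGF_+'$), and that no other step set sneaks into the equality case. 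Everything else is a one-line monotonicity argument.
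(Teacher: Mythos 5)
Your proposal is correct and follows essentially the same route as the paper: the paper observes that $\PathGF'$ is strictly increasing and that $\PathGF'(1) = -1 + \sum_{s\geq 0} s \geq 0$ with equality exactly for $\mathcal{S} \in \{\{-1,1\},\{-1,0,1\}\}$, which is the same computation as your $\tau^2\PathGF_+'(\tau)=1$ versus $\PathGF_+'(1)\geq 1$ up to multiplying the monotone function by $u^2$. The equality-case bookkeeping and the strict-monotonicity check you flag are handled identically in the paper.
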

\begin{proof}
  First, observe that $\PathGF'$ is a strictly increasing function. For $u \geq 1$, we have
  \[ \PathGF'(u) \geq \PathGF'(1) = -1 + \sum_{\substack{s\in\mathcal{S}\\ s \geq 0}} s \geq 0  \]
  with equality if and only if $u = 1$ and $\mathcal{S} \in \{\{-1, 0, 1\}, \{-1,
  1\}\}$. Monotonicity of $\PathGF'$ then implies the assertion of the lemma.
\end{proof}
\begin{remark}
  The number of \L ukasiewicz excursions of length $n$ is trivially bounded from above by
  $\abs{\mathcal{S}}^{n}$ which corresponds to all paths with the same step set but
  without any restrictions. Consequently, the radius of convergence $\rho$ of the
  generating function of excursions $V(z)/z$ is bounded from below by
  $\frac{1}{\abs{\mathcal{S}}}$.

  Assume that $\mathcal{S} \not\in \{\{-1, 0, 1\}, \{-1, 1\}\}$. In this case, $\tau < 1$
  and $\PathGF'(u) > 0$ for $\tau < u < 1$. This implies $\abs{\mathcal{S}} = \PathGF(1) > \PathGF(\tau) =
  1/\rho$, which means that the radius of convergence $\rho$ is strictly larger than the
  trivial bound $1/\abs{\mathcal{S}}$. In other words, for all but the two simple step
  sets $\{-1, 0, 1\}$ and $\{-1, 1\}$, the restriction to \L ukasiewicz excursions leads
  to an exponentially smaller number of admissible paths.
\end{remark}

The quantity $\PathGF'(1)$ is also referred to as the \emph{drift} of the walk (see,
e.g.,~\cite[Section 3.2]{Banderier-Flajolet:2002:lat-path}) and strongly influences the
asymptotic behavior of corresponding meanders.

\section{Analysis of Ascents}\label{sec:ascents}
\subsection{Analysis of Excursions}\label{sec:ascents:excursions}

In this section we focus on the analysis of \emph{excursions}, i.e., paths that start and
end on the horizontal axis. As mentioned in Section~\ref{sec:gf-analytic}, on the generating
function level, this corresponds to setting $v = 0$ in $F(z, t, v)$
from~\eqref{eq:lukasiewicz-ascents:p-plus}. Also note that from this point on it is quite
useful to replace $\PathGF_{+}(v) = \PathGF(v) - 1/v$ in  $F(z,t,v)$.

Recall that $E_{n,r}$ is the random variable modeling the number of $r$-ascents in a
random non-negative \L ukasiewicz excursion of length $n$ with respect to some given step
set $\mathcal{S}$.

\begin{theorem}\label{thm:ascents:excursions}
  Let $r\in\N$, $n\in \N_{0}$, and $p \geq 1$ be the period of the step set $\mathcal{S}$. Let $\tau$ be the
  structural constant, i.e., the unique positive solution of $\PathGF'(\tau) = 0$. Set $c := \tau \PathGF(\tau)$.

  Then, the expected number of $r$-ascents in \L ukasiewicz
  paths of length $n$ for $n\equiv 0\pmod{p}$ as well as the corresponding variance grow with $n\to\infty$
  according to the asymptotic expansions
  \begin{align}\begin{split}\label{eq:ascents:excursions:expectation}
    \E E_{n,r} = \frac{(c - 1)^{r}}{c^{r+2}} n
    + \frac{(c - 1)^{r-2}}{2 \tau^{2}c^{r+2} \PathGF''(\tau)^{2}}\Big(&
    \PathGF''(\tau)^{2}\tau^{2} \big(4c^{2} - (r+8)c + r+4\big)\\
    &- \PathGF''(\tau)\PathGF(\tau) \big(6c^{2} - 6(r+2) c + r^{2} + 5r +
    6\big)\\
    &- \PathGF'''(\tau)c(2c^{2} - (r+4)c + r+2)
    \Big)  + O(n^{-1/2})
    \end{split}
  \end{align}
  and
  \begin{align}\begin{split}\label{eq:ascents:excursions:variance}
      \V E_{n,r} = \myBigl(\frac{(c-1)^{r}}{c^{r+2}} + \frac{(2c - 2r -
        3)(c-1)^{2r}}{c^{2r+4}} - \frac{(c-1)^{2r-2} (2c - r - 2)^{2}}{c^{2r+3} \tau^{3}
        \PathGF''(\tau)} \myBigr) n + O(n^{1/2}).
  \end{split}\end{align}
  Additionally, for $n \not\equiv 0 \mod p$, we have $E_{n,r} = 0$. All $O$-constants
  depend implicitly on $r$.
\end{theorem}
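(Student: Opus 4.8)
The plan is to express the moments of $E_{n,r}$ through coefficients of $V(z)$ and its $t$-derivatives at $t=1$, and to feed the singular information from Section~\ref{sec:SA-inverse} into singularity analysis. By Corollary~\ref{cor:lukasiewicz-gf-implicit}, $p_{n}(t) := [z^{n+1}]V(z,t)$ is the polynomial recording, over all \L ukasiewicz excursions of length $n$, the number of $r$-ascents in the exponent of $t$; the index shift $n\mapsto n+1$ accounts for such an excursion corresponding to a tree with $n+1$ nodes. Under the equidistribution model this gives $\E E_{n,r} = p_{n}'(1)/p_{n}(1)$ and $\E(E_{n,r}^{2}) = p_{n}''(1)/p_{n}(1) + p_{n}'(1)/p_{n}(1)$, hence $\V E_{n,r} = p_{n}''(1)/p_{n}(1) + p_{n}'(1)/p_{n}(1) - \big(p_{n}'(1)/p_{n}(1)\big)^{2}$, where $p_{n}(1) = [z^{n+1}]V(z)$, $p_{n}'(1) = [z^{n+1}]V_{t}(z)$ and $p_{n}''(1) = [z^{n+1}]\partial_{t}^{2}V(z,t)|_{t=1}$. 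By Corollary~\ref{cor:V-combinatorial-interpretation}, $V(z,t)$ (and hence each of its $t$-derivatives) is supported on exponents $\equiv 1\pmod p$, so all three coefficients vanish whenever $p\nmid n$; this is precisely the degenerate case $E_{n,r} = 0$ in the statement, and I would assume $p\mid n$ from here on.

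Next I would collect the singular expansions at the dominant singularities $\zeta\rho$, $\zeta\in G(p)$, with $\rho = 1/\PathGF(\tau)$. For $V(z)$ this is Corollary~\ref{cor:V-derivatives}(1), the square-root singularity being genuine because $\PathGF(u) = u^{-1} + \PathGF_{+}(u)$ gives $\PathGF''(\tau) > 0$. Differentiating the functional equation~\eqref{eq:V-combinatorial-func} once and twice with respect to $t$ and setting $t = 1$ expresses $V_{t}(z)$ --- as in~\eqref{eq:v-diff-1} --- and $\partial_{t}^{2}V(z,t)|_{t=1}$ as explicit rational functions of $V(z)$, $z$, and derivatives of $\PathGF$ at $V(z)$. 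Substituting the expansion of $V(z)$, using the identity $z\PathGF(V(z)) = 1$, and exploiting that $\PathGF'(V(z))$ vanishes to order $(1-z/\rho)^{1/2}$ at $z = \rho$ (because $\PathGF'(\tau) = 0$) turns these into singular expansions $V_{t}(z) = O\big((1-z/\rho)^{-1/2}\big)$ and $\partial_{t}^{2}V(z,t)|_{t=1} = O\big((1-z/\rho)^{-3/2}\big)$, which I would carry to two further orders each; the evaluations $V(\rho) = \tau$ and $V(\rho) - \rho = \tau(c-1)/c$ with $c := \tau\PathGF(\tau)$ are what collapse $\rho$ and $\PathGF(\tau)$ into the constant $c$ of the statement. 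One also checks that these $t$-derivatives are again $p$-periodic power series whose dominant singularities are exactly at $\zeta\rho$, $\zeta\in G(p)$, so that Proposition~\ref{prop:singularities-roots-of-unity} applies.

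By Proposition~\ref{prop:singularities-roots-of-unity}, each of $[z^{n+1}]V$, $[z^{n+1}]V_{t}$, $[z^{n+1}]\partial_{t}^{2}V(z,t)|_{t=1}$ equals $p$ times the singularity-analysis transfer of its local expansion at $\rho$, the other roots of unity being absorbed into the factor $p$. The common factors $p$ and $\rho^{-n-1}$ then cancel in the ratios $p_{n}'(1)/p_{n}(1)$ and $p_{n}''(1)/p_{n}(1)$, and ordinary singularity analysis --- $[z^{n}](1-z/\rho)^{-\alpha} \sim \rho^{-n} n^{\alpha-1}/\Gamma(\alpha)$ together with its first correction term, plus the $n\mapsto n+1$ shift --- turns them into expansions in descending powers of $n^{1/2}$; the half-integer powers drop out (poles of $1/\Gamma$ at non-positive integers), leaving $\E E_{n,r} = \mu n + c_{0} + O(n^{-1/2})$. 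For the variance, $p_{n}''(1)/p_{n}(1)$ contributes a leading $n^{2}$ term which must --- and, after simplification, does --- cancel against $(\mu n)^{2}$, consistently with the variance of an additive path statistic being $O(n)$; the surviving linear term is $\sigma^{2} n$, with error $O(n^{1/2})$. Collecting the contributions, and folding the index shift into the constant term of the expectation, gives~\eqref{eq:ascents:excursions:expectation} and~\eqref{eq:ascents:excursions:variance}.

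The conceptual chain is routine; the main obstacle is the sheer volume and fragility of the symbolic computation: producing enough terms of the $V_{t}$- and $\partial_{t}^{2}V$-expansions, confirming the $n^{2}$-cancellation in the variance, and simplifying the raw output into the stated closed forms --- the three-line constant in~\eqref{eq:ascents:excursions:expectation} and the three-term coefficient in~\eqref{eq:ascents:excursions:variance}. I would carry this out with the SageMath package for asymptotic expansions, in the worksheet \texttt{lukasiewicz-excursions.ipynb}.
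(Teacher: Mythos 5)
Your proposal is correct and follows essentially the same route as the paper: both reduce the moments to coefficients of $V(z)$, $V_{t}(z)$, and $V_{tt}(z)$ via the factorial-moment identities, obtain their singular expansions from Corollary~\ref{cor:V-derivatives} together with Proposition~\ref{prop:singularities-roots-of-unity} to handle the $p$ dominant singularities, transfer by singularity analysis, normalize by the excursion count, and delegate the heavy symbolic simplification to SageMath. The only cosmetic difference is that you justify the degenerate case $E_{n,r}=0$ via the support of the power series~\eqref{eq:V-power-series} rather than directly from the non-existence of excursions of length $n\not\equiv 0\pmod p$, which is the same fact.
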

\begin{proof}
  While the proof for this theorem actually is quite straightforward, it involves
  some rather computationally expensive operations with asymptotic expansions, which we
  have carried out with the help of SageMath, see the corresponding worksheet as
  referenced at the end of Section~\ref{sec:introduction}.

  As discussed in the remark after the definition of periodic lattice paths, for a
  $p$-periodic step set $\mathcal{S}$ there are no excursions of length $n$ for $n
  \not\equiv 0\mod p$. Thus, the random variable $E_{n,r}$ degenerates to the constant $0$
  in these cases,
  allowing us to focus on the case where $n$ is a multiple of $p$.

  Based on the fact that the generating function enumerating $r$-ascents within \L
  ukasiewicz excursions is given by $F(z,t,0) = V(z,t)/z$, our general strategy for
  determining asymptotic expansions for the expected number of
  $r$-ascents and the corresponding variance is to compute expansions for the first and
  second factorial moment by normalizing the extracted coefficients of $V_{t}(z)/z$ and
  $V_{tt}(z)/z$.

  In order to normalize these extracted coefficients, we need to compute an asymptotic
  expansion for the number of $\mathcal{S}$-excursions of given length. To accomplish
  this, we could simply use the general framework developed by Banderier and Flajolet
  in~\cite[Theorem 3]{Banderier-Flajolet:2002:lat-path}---however, we choose to analyze
  this quantity more directly by applying singularity analysis to the singular expansion
  of $V(z) =  V(z,1)$ given in~\eqref{eq:v-expansion}. With the help of SageMath and
  Proposition~\ref{prop:singularities-roots-of-unity}, we immediately find
  \begin{align}\begin{split}\label{eq:V-coefs-asy}
      [z^{n}] \frac{V(z)}{z} & = p \sqrt{\frac{\PathGF(\tau)^{3}}{2\pi \PathGF''(\tau)}}\, \PathGF(\tau)^{n}
      n^{-3/2} \\
      & \quad  - \frac{p}{24} \sqrt{\frac{\PathGF(\tau)^{3} }{2\pi \PathGF''(\tau)^{7}}}\, \big(
      45 \PathGF''(\tau)^{3} + 5 \PathGF(\tau) \PathGF'''(\tau)^{2}
       \\ & \qquad\qquad\qquad\qquad\qquad - 3 \PathGF(\tau) \PathGF''(\tau) \PathGF''''(\tau)
      \big) \PathGF(\tau)^{n} n^{-5/2} \\ & \quad + O(\PathGF(\tau)^{n} n^{-3})
    \end{split}\end{align}
  for $n \equiv 0 \pmod{p}$.

  For the analysis of the expected value, we turn our attention to $V_{t}(z)/z$. By
  Corollary~\ref{cor:V-derivatives}, the derivative $V_{t}(z)$ can be expressed in terms
  of $V(z)$. Then, as $V(z)$ and $V_{t}(z)$ are both analytic with radius of convergence
  $\rho$, we can use the singular expansion of $V(z)$ together with~\eqref{eq:v-diff-1} to
  arrive at a singular expansion of $V_{t}(z)/z$. By~\eqref{eq:V-power-series} we know
  that Proposition~\ref{prop:singularities-roots-of-unity} can be used to obtain the
  singular expansion of $V_{t}(z)$ for $z\to \zeta\rho$ with $\zeta\in G(p)$.

  This allows us to extract the $n$th coefficient of
  $V_{t}(z)/z$ by means of singularity analysis---and dividing by 
  the growth of the number
  of excursions of length $n$ yields an asymptotic expansion for $\E E_{n,r}$. This
  gives~\eqref{eq:ascents:excursions:expectation}.

  In the same manner, by investigating the second derivative $V_{tt}(z)/z$, we can obtain
  an asymptotic expansion for the second factorial moment, $\E (E_{n,r}
  (E_{n,r}-1))$. Then, applying the well-known identity
  \[ \V E_{n,r} = \E (E_{n,r}(E_{n,r}-1)) + \E E_{n,r} - (\E E_{n,r})^{2}, \]
  proves~\eqref{eq:ascents:excursions:variance}.
\end{proof}

By means of Theorem~\ref{thm:ascents:excursions} we are immediately able to determine the
asymptotic behavior of interesting special cases. We are particularly interested in the
most basic setting: $\mathcal{S} = \{-1, 1\}$, i.e., Dyck paths.

\begin{example}[$r$-Ascents in Dyck paths]\label{example:r-ascents-dyck}
  In the case of Dyck paths, we have $u \PathGF(u) = 1 + u^{2}$. From there, it is easy to see
  that $\tau = 1$ and $\rho = 1/2$, and that the family of paths is $2$-periodic. By the
  same approach as in the proof of Theorem~\ref{thm:ascents:excursions}, we
  can determine the expected number and variance of $r$-ascents in Dyck paths of length
  $2n$ with higher precision than stated in Theorem~\ref{thm:ascents:excursions}, namely as
  \[
    \E D_{2n,r} = \frac{n}{2^{r+1}} - \frac{(r+1)(r-4)}{2^{r+3}} \\ + \frac{(r^{2} - 11r +
      22)(r+1)r}{2^{r+6}} n^{-1} + O(n^{-2})
  \]
  and
  \begin{multline*}
    \V D_{2n,r} = \myBigl(\frac{1}{2^{r+1}} - \frac{r^{2} - 2r + 3}{2^{2r+3}}\myBigr)n \\
    - \myBigl(\frac{r^{2} - 3r - 4}{2^{r+3}} - \frac{3r^{4} - 20r^{3} + 29r^{2} - 10r -
      14}{2^{2r+5}}\myBigr) + O(n^{-1/2}).
  \end{multline*}
  However, as we have a closed expression for $V(z)$, we can do even better. Because of
  \[ \frac{V(z)}{z} = \frac{1 - \sqrt{1 - 4z^{2}\,}}{2z^{2}}, \]
  we can also write down the generating function $V_{t}(z)/z$ for the expected number of
  $r$-ascents explicitly. We find the $2$-periodic power series
  \begin{equation}\label{eq:ascents:dyck:gf-expectation}
    \frac{V_{t}(z)}{z} = \frac{(1 - \sqrt{1 - 4z^{2}\,})^{r} (1 + \sqrt{1 - 4z^{2}})}{2^{r+1} \sqrt{1 - 4z^{2}\,}},
  \end{equation}
  for which, after substituting $Z = z^{2}$, we can apply Cauchy's integral formula in
  order to extract the expected values before normalization with the $n$th Catalan number
  $C_{n}$ explicitly.

  Considering a contour $\gamma$ that stays sufficiently close to the origin and winds
  around it exactly once, and by the integral substitution $Z = \frac{u}{(1+u)^{2}}$ we
  obtain
  \begin{align*}
    [Z^{n}] \frac{V_{t}(z)}{z} & = \frac{1}{2\pi i} \oint_{\gamma} \frac{V_{t}(z)/z}{Z^{n+1}}~dZ\\
                & = \frac{1}{2\pi i} \oint_{\tilde\gamma} \myBigl(\frac{u}{1+u}\myBigr)^{r}
                  \frac{1}{1 - u} \frac{(1 + u)^{2n+2}}{u^{n+1}} \frac{1 - u}{(1 +
                  u)^{3}}~du\\
                & = \frac{1}{2\pi i} \oint_{\tilde\gamma} \frac{(1 +
                  u)^{2n-r-1}}{u^{n-r+1}}~du = [u^{n-r}] (1+u)^{2n-r+1}\\
                & = \binom{2n-r-1}{n-1},
  \end{align*}
  where $\tilde\gamma$ denotes the image of $\gamma$ under the transformation; a curve
  that also stays close to the origin and winds around it once. After normalization, this
  proves the exact formula
  \[ \E D_{2n,r} = \frac{1}{C_{n}} \binom{2n-r-1}{n-1}. \]
\end{example}

\subsection{Analysis of Dispersed Excursions}\label{sec:ascents:dispersed}
Let $\mathcal{S}$ be a \L ukasiewicz step set where $0 \not\in\mathcal{S}$. In this
setting, we define a \emph{dispersed \L ukasiewicz excursion} to be an
$\mathcal{S}$-excursion where, additionally, horizontal steps ``$\rightarrow$'' can be
taken whenever the path is on its starting altitude. Observe that, by our definition of $r$-ascents, these
horizontal steps do not contribute towards ascents, as only the non-negative steps from
$\mathcal{S}$ are relevant.

The motivation to study this specific family of \L ukasiewicz paths originates
from~\cite{Kangro-Pourmoradnasseri-Theis:2016:ascents-dispersed-dyck}, where the authors
investigate the total number of $1$-ascents in dispersed Dyck paths using elementary
methods. Our goal in this section is to find asymptotic expansions for the number of
dispersed \L ukasiewicz excursions of given length as well as for the expected number of
$r$-ascents in these paths.

We begin our analysis by constructing a suitable bivariate generating function
enumerating dispersed \L ukasiewicz excursions with respect to their length and the number
of $r$-ascents.

\begin{proposition}
  Let $r\in\N$ and $V(z,t)$ as in Proposition~\ref{prop:lukasiewicz-ascents} or
  Corollary~\ref{cor:lukasiewicz-gf-implicit}. Then the
  generating function $D(z,t)$ enumerating dispersed $\mathcal{S}$-excursions where $z$
  marks the length of the excursion and $t$ marks the number of $r$-ascents is given by
  \begin{equation}\label{eq:dispersed:gf}
    D(z,t) = \frac{1}{z} \frac{V(z,t)}{1 - V(z,t)}.
  \end{equation}
\end{proposition}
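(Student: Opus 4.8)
The plan is to express $D(z,t)$ through a combinatorial decomposition of dispersed excursions at their contacts with the starting altitude, and then to rewrite everything in terms of $V(z,t)$ using the fact (Corollary~\ref{cor:lukasiewicz-gf-implicit}) that $V(z,t)/z$ is the bivariate generating function of $\mathcal{S}$-excursions by length and number of $r$-ascents.

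First I would introduce the generating function $A(z,t)$ of \emph{arches}, i.e., non-empty $\mathcal{S}$-excursions touching the starting altitude only at their two endpoints. Every non-empty $\mathcal{S}$-excursion factors uniquely into a sequence of arches by cutting at each return to altitude $0$, and conversely any such sequence yields an $\mathcal{S}$-excursion. Crucially, the number of $r$-ascents is additive along this factorisation: a maximal run of up steps cannot straddle a return to altitude $0$ because — and here the standing hypothesis $0\notin\mathcal{S}$ of this subsection is used — the only step arriving at altitude $0$ from a positive altitude is the down step $-1$, while every arch begins with an up step of height $\ge 1$. Hence $V(z,t)/z = \sum_{k\ge 0} A(z,t)^{k} = 1/(1 - A(z,t))$, which gives $A(z,t) = 1 - z/V(z,t) = (V(z,t)-z)/V(z,t)$.

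Next I would decompose a dispersed $\mathcal{S}$-excursion at its contacts with altitude $0$ in the same way. Each block between two consecutive contacts is either a single horizontal step $\rightarrow$ (which is only available at altitude $0$ and, by the definition of $r$-ascents, carries no $t$; generating function $z$) or an arch (generating function $A(z,t)$), and this decomposition is again a bijection. The additivity of the ascent count survives because horizontal-step blocks contain no up steps at all and therefore can never extend a run. Consequently
\[ D(z,t) = \sum_{k\ge 0} \bigl(z + A(z,t)\bigr)^{k} = \frac{1}{1 - z - A(z,t)}. \]
Substituting $A(z,t) = 1 - z/V(z,t)$ and simplifying, $1 - z - A(z,t) = z/V(z,t) - z = z\,(1 - V(z,t))/V(z,t)$, so $D(z,t) = V(z,t)/\bigl(z(1-V(z,t))\bigr)$, which is~\eqref{eq:dispersed:gf}. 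As a sanity check, $V(z,t) = z + O(z^{2})$ yields $D(z,t) = 1 + O(z)$, matching the single empty dispersed excursion.

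The main obstacle is the additivity claim for the $r$-ascent statistic across the factorisation points — in particular, verifying carefully that no maximal run of up steps can cross a visit to the starting altitude, and that horizontal-step blocks are ascent-neutral. This is exactly where the assumption $0\notin\mathcal{S}$ enters: it forces the step reaching altitude $0$ to be $-1$, so every arch starts with a genuine up step and ends with a down step; once this is established, the remainder is the routine algebraic simplification above.
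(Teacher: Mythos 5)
Your proof is correct, but it takes a different decomposition than the paper. The paper writes the class of dispersed excursions directly as $\mathcal{D} = (\mathcal{E}\,\rightarrow)^{*}\,\mathcal{E}$, where $\mathcal{E}$ is the class of $\mathcal{S}$-excursions with generating function $V(z,t)/z$ from Corollary~\ref{cor:lukasiewicz-gf-implicit}; translating this gives $D(z,t) = \frac{1}{1 - V(z,t)}\cdot\frac{V(z,t)}{z}$ in one step, with no need for arches. You instead refine all the way down to the first-return (arch) decomposition, derive $A(z,t) = 1 - z/V(z,t)$ from $V(z,t)/z = 1/(1-A(z,t))$, and then re-aggregate via $D = 1/(1-z-A)$; the algebra closes up to the same formula~\eqref{eq:dispersed:gf}. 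What your detour buys is an explicit verification that the $r$-ascent statistic is additive across every visit to the starting altitude --- using $0\notin\mathcal{S}$ to see that each arch starts with a step of height $\ge 1$ and ends with the down step $-1$, and that the inserted horizontal steps are ascent-neutral --- a point the paper's one-line translation of the symbolic equation into bivariate generating functions leaves implicit (and which is equally needed there, since marking with $t$ is only legitimate if the statistic is additive over the concatenation). The cost is an extra intermediate object and a slightly longer computation; the paper's grouping into (excursion, horizontal step) pairs is more economical because it reuses the already-established generating function of excursions wholesale.
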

\begin{proof}
  Let $\mathcal{E}$ denote the combinatorial class of $\mathcal{S}$-excursions. The
  corresponding bivariate generating function is given by $V(z,t)/z$, as proved in
  Corollary~\ref{cor:lukasiewicz-gf-implicit}.

  By the symbolic method (see~\cite[Chapter I]{Flajolet-Sedgewick:ta:analy}), the combinatorial class
  $\mathcal{D}$ of dispersed excursions can be constructed as
  \[ \mathcal{D} = (\mathcal{E} \rightarrow)^{*} \mathcal{E}.  \]
  Translating this combinatorial construction in the language of
  (bivariate) generating functions, we find
  \[ D(z,t) = \frac{1}{1 - \frac{V(z,t)}{z} z} \frac{V(z,t)}{z}, \]
  and simplification immediately yields~\eqref{eq:dispersed:gf}.
\end{proof}

In preparation for the analysis of the generating function $D(z,t)$, we have to
investigate the structure of the dominant singularities. In particular, the following
lemma states that in most cases, the dominant singularity of $D(z,1)$ comes from the
dominant square root singularities on the radius of convergence of $V(z)$.

\begin{lemma}\label{lem:v-is-less-than-1}
  The radius of convergence of $D(z,1)$ (as well as for the corresponding partial
  derivatives with respect to $t$, i.e., $\frac{\partial^{\nu}}{\partial t^{\nu}}
  D(z,t)|_{t=1}$) is given by $\rho = 1/\PathGF(\tau)$, where $\tau > 0$ is the structural
  constant with respect to the step set $\mathcal{S}$.
\end{lemma}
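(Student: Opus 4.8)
The plan is to establish the radius of convergence as $\rho$ by a lower and an upper bound, exploiting that $D(z,1)$ and all of the functions $\Psi_{\nu}(z)\coloneqq \frac{\partial^{\nu}}{\partial t^{\nu}}D(z,t)\big|_{t=1}$ (so $\Psi_{0}=D(z,1)$) have non-negative Taylor coefficients, being factorial-moment generating functions of a combinatorial family. By Pringsheim's theorem the radius of convergence of each $\Psi_{\nu}$ is then a singularity lying on the positive real axis, so it suffices to prove \textbf{(i)} that every $\Psi_{\nu}$ is analytic on the open disc $\abs{z}<\rho$, and \textbf{(ii)} that no $\Psi_{\nu}$ extends analytically to the point $z=\rho$.

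For part~(i)---and this is where the name of the lemma comes from---I would first show that $\abs{V(z)}<\tau\le 1$ for $\abs{z}<\rho$: the coefficients of $V$ are non-negative (Corollary~\ref{cor:lukasiewicz-gf-implicit}), so $\abs{V(z)}\le V(\abs{z})$; on $(0,\rho)$ the function $V$ is strictly increasing with $\lim_{x\to\rho^{-}}V(x)=\tau$ by~\eqref{eq:v-expansion}; and $\tau\le 1$ by Lemma~\ref{lem:tau-1}. Hence $1-V(z)$ never vanishes on $\abs{z}<\rho$, so $\frac{1}{1-V(z)}$ is analytic there; moreover $V(z)=0$ forces $z=0$ (from $V(z)\bigl(1-z\PathGF_{+}(V(z))\bigr)=z$), and $\PathGF'$ has no zero with $0<\abs{w}<\tau$, since $w^{2}\PathGF'(w)=-1+\sum_{s\in\mathcal{S},\,s\ge1}s\,w^{s+1}$ has, for $\abs{w}<\tau$, its sum of modulus strictly below $\sum_{s\ge1}s\,\tau^{s+1}=\tau^{2}\PathGF'(\tau)+1=1$, keeping $w^{2}\PathGF'(w)$ inside the open unit disc centred at $-1$. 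By Corollary~\ref{cor:V-derivatives} each $\frac{\partial^{\nu}}{\partial t^{\nu}}V(z,t)\big|_{t=1}$ is a rational function of $z$ and $V(z)$ whose denominator is a product of powers of $V(z)$ and $\PathGF'(V(z))$ (the quantity one divides by at each step of the iterated implicit differentiation of $V=zL(z,t,V)$, namely $1-z\,\partial_{v}L$, specialising at $t=1$, $v=V(z)$ to $-V(z)^{2}\PathGF'(V(z))$, as is seen from $z\PathGF(V(z))=1$). Since neither $V(z)$ nor $\PathGF'(V(z))$ vanishes on $0<\abs{z}<\rho$ and these derivatives are already analytic near $z=0$ (Corollary~\ref{cor:V-derivatives}), they are analytic on $\abs{z}<\rho$; and so is $\Psi_{\nu}$, being a polynomial in $\frac1z$, in these derivatives, and in $\frac{1}{1-V(z)}$. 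This yields the lower bound.

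For part~(ii) I would argue along the positive real axis. For $\nu=0$: if $\mathcal{S}=\{-1,1\}$ then $\tau=1$ and $D(z,1)\to\infty$ as $z\to\rho^{-}$; otherwise $D(z,1)=\frac1z\,g(V(z))$ with $g(v)=v/(1-v)$ biholomorphic near $v=\tau<1$, so $g(V(z))$ inherits the square-root branch point of $V$ at $\rho$ (the $(1-z/\rho)^{1/2}$-coefficient in~\eqref{eq:v-expansion} being nonzero), whence $D(z,1)$ is not analytic at $\rho$. For $\nu\ge1$, writing $W\coloneqq\frac{V(z,t)}{1-V(z,t)}$ and using $\frac{\partial}{\partial t}W=V_{t}(1+W)^{2}$, one checks by induction that $\frac{\partial^{\nu}}{\partial t^{\nu}}W=\nu!\,V_{t}^{\nu}(1+W)^{\nu+1}+(\text{further terms})$, each further term being a product of the power series $\frac{\partial^{j}}{\partial t^{j}}V$ and $1+W$, all of which have non-negative coefficients. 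Setting $t=1$ and comparing coefficients gives, for $x\in(0,\rho)$,
\[ \Psi_{\nu}(x)=\frac1x\,\frac{\partial^{\nu}}{\partial t^{\nu}}W(x,t)\Big|_{t=1}\ \ge\ \frac{\nu!}{x}\,V_{t}(x)^{\nu}\bigl(1-V(x)\bigr)^{-(\nu+1)}\ \ge\ \frac{\nu!}{\rho}\,V_{t}(x)^{\nu}, \]
using $0\le V(x)<1$. Finally $V_{t}(x)\to+\infty$ as $x\to\rho^{-}$: by~\eqref{eq:v-diff-1} one has $V_{t}(x)=-x\,(V(x)-x)^{r}\big/\bigl(V(x)^{r+2}\PathGF'(V(x))\bigr)$, whose numerator tends to $\rho(\tau-\rho)^{r}>0$---note $\rho=1/\PathGF(\tau)<\tau$, since $\tau\PathGF(\tau)=1+\sum_{s\in\mathcal{S},\,s\ge0}\tau^{s+1}>1$---while $\PathGF'(V(x))\to 0^{-}$ because $V(x)\uparrow\tau$ and $\PathGF'<0$ on $(0,\tau)$. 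Hence $\Psi_{\nu}(x)\to+\infty$, so $\Psi_{\nu}$ cannot be analytic at $\rho$, and combined with part~(i) this pins the radius of convergence at $\rho$.

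The step I expect to be the main obstacle is making part~(ii) uniform in $\nu$: written out, $\Psi_{\nu}$ is a combination of the individually singular functions $V,V_{t},V_{tt},\dots$, and one must exclude a cancellation of their singular parts at $\rho$. The plan sidesteps this by restricting to $x\in(0,\rho)$ and using non-negativity of all Taylor coefficients, which reduces the matter to the single coefficientwise estimate $\Psi_{\nu}\succeq\frac{\nu!}{z}V_{t}^{\nu}(1-V)^{-(\nu+1)}$ together with the elementary divergence of $V_{t}$ at $\rho$.
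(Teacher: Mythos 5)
Your argument is correct, and its core coincides with the paper's: the paper's entire proof is your first step, namely $\abs{V(z)}\le V(\abs{z})\le V(\rho)=\tau\le 1$ with strict inequality for $\abs{z}<\rho$ (Lemma~\ref{lem:tau-1} plus $0\notin\mathcal{S}$), so that $1-V(z)$ is zero-free on the open disc. Everything else you write is material the paper leaves implicit, and it is worth comparing. First, you verify that the $t$-derivatives of $V$ are themselves analytic on all of $\abs{z}<\rho$ by showing that $V(z)$ and $\PathGF'(V(z))$ do not vanish on the punctured disc (your estimate $\abs{w^{2}\PathGF'(w)+1}<1$ for $\abs{w}<\tau$ is a nice touch); the paper simply takes this from Corollary~\ref{cor:V-derivatives} without comment. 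Second, and more substantially, you prove that the radius of convergence is \emph{exactly} $\rho$ rather than merely at least $\rho$: the paper's proof only establishes the lower bound and relies on the explicit singular expansions computed later (in the proof of Theorem~\ref{thm:dispersed:tau-not-1}) to see that $\rho$ is genuinely singular. Your Pringsheim-plus-divergence argument, with the coefficientwise minorization $\frac{\partial^{\nu}}{\partial t^{\nu}}W\succeq \nu!\,V_{t}^{\nu}(1+W)^{\nu+1}$ to preclude cancellation of singular parts, handles this uniformly in $\nu$ and is a genuinely self-contained alternative to reading off the singular expansions. One small slip: the numerator $-x(V(x)-x)^{r}$ in \eqref{eq:v-diff-1} tends to $-\rho(\tau-\rho)^{r}<0$, not to $+\rho(\tau-\rho)^{r}$; since the denominator $V(x)^{r+2}\PathGF'(V(x))$ tends to $0^{-}$, the quotient still tends to $+\infty$, so your conclusion is unaffected.
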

\begin{proof}
  As $V(z)$ is a power series with non-negative coefficients, we have
  \[ \abs{V(z)} \leq V(\abs{z}) \leq V(\rho) = \tau \leq 1  \]
  for $\abs{z}\leq \rho$ by Lemma~\ref{lem:tau-1}. By the same lemma and because we
  assumed $0\not\in\mathcal{S}$, equality holds only
  in case of $\mathcal{S} = \{-1, 1\}$. Thus, the denominator $1 -
  V(z)$ of $D(z,1)$ does not contribute a pole for $\abs{z} < \rho$.
\end{proof}

Lemma~\ref{lem:v-is-less-than-1} tells us that in the general case of $\tau \neq 1$, 
the singularities of $D(z,1)$ are of the same type as the singularities of $V(z)$. Therefore,
the precise description of the singular structure of $V(z)$ given in 
Corollary~\ref{cor:V-derivatives} allows us to carry out the asymptotic analysis.

Recall that $D_{n,r}$ is the random variable modeling the number of $r$-ascents in a
random dispersed \L ukasiewicz excursion of length $n$ with respect to some step set
$\mathcal{S}$.

\begin{theorem}\label{thm:dispersed:tau-not-1}
  Let $p \geq 1$ be the period of the step set $\mathcal{S}$. Assume additionally that for
  the structural constant $\tau$ we have $\tau \neq 1$.

  Then $d_{n}$, the number of dispersed \L ukasiewicz excursions of length $n$, satisfies
  \begin{align}\begin{split}\label{eq:dispersed:tau-not-1:number}
      d_{n} & = \frac{1}{\sqrt{2\pi}} \frac{p\tau^{k} (\tau^{p}(p-k-1) + k+1)}{(1 -
        \tau^{p})^{2}} \sqrt{\frac{\PathGF(\tau)^{3}}{\PathGF''(\tau)}} \PathGF(\tau)^{n}n^{-3/2} + O(\PathGF(\tau)^{n} n^{-5/2})
    \end{split}\end{align}
  for $n\equiv k\mod p$ and $0\leq k\leq p-1$.
  Furthermore, the expected number of $r$-ascents grows with $n\to\infty$ according to the
  asymptotic expansion
  \begin{equation}\label{eq:dispersed:tau-not-1:expectation}
    \E D_{n,r} = \frac{(\tau \PathGF(\tau) - 1)^{r}}{(\tau \PathGF(\tau))^{r+2}} n + O(1).
  \end{equation}
  The $O$-constants depend implicitly on both $r$ as well as on the residue class
  of $n$ modulo $p$.
\end{theorem}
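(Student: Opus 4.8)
The plan is to apply singularity analysis to the generating function $D(z,t)$ from~\eqref{eq:dispersed:gf}, exploiting the precise description of the singular structure of $V(z)$ provided by Corollary~\ref{cor:V-derivatives}. Since $\tau\neq 1$, Lemma~\ref{lem:v-is-less-than-1} shows that $D(z,1)$ has radius of convergence $\rho=1/\PathGF(\tau)$ and that $1-V(z)$ does not vanish for $\abs{z}\le\rho$; combined with Corollary~\ref{cor:V-derivatives} this shows that the dominant singularities of $D(z,1)$ are located exactly at $\zeta\rho$ for $\zeta\in G(p)$, all of them of square-root type. For the expected value I will work with the first-moment generating function $D_{t}(z):=\frac{\partial}{\partial t}D(z,t)|_{t=1}=\frac{1}{z}\,\frac{V_{t}(z)}{(1-V(z))^{2}}$ (obtained by differentiating~\eqref{eq:dispersed:gf}), so that $\E D_{n,r}=[z^{n}]D_{t}(z)/d_{n}$, and I will substitute the formula~\eqref{eq:v-diff-1} for $V_{t}(z)$ together with the expansion~\eqref{eq:v-expansion} for $V(z)$.

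First I would compute the local expansion of $D(z,1)$ at each dominant singularity $z=\zeta\rho$. Inserting~\eqref{eq:v-expansion} into $D(z,1)=\frac1z\frac{V(z)}{1-V(z)}$ and expanding $1/z$ around $\zeta\rho$ gives, up to a summand analytic at $\zeta\rho$,
\[
  D(z,1) \stackrel{z\to\zeta\rho}{=} -\frac{\PathGF(\tau)}{(1-\zeta\tau)^{2}}\sqrt{\frac{2\PathGF(\tau)}{\PathGF''(\tau)}}\,\myBigl(1-\frac{z}{\zeta\rho}\myBigr)^{1/2} + O\myBigl(\myBigl(1-\frac{z}{\zeta\rho}\myBigr)^{3/2}\myBigr),
\]
where $1-\zeta\tau\neq 0$ because $\abs{\zeta\tau}=\tau<1$, and the factor $(1-\zeta\tau)^{-2}$ is the derivative of $v\mapsto v/(1-v)$ at $v=\zeta\tau$. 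For $D_{t}(z)$ the decisive point is that $V_{t}(z)$ is itself singular of type $(1-z/(\zeta\rho))^{-1/2}$ at \emph{every} $\zeta\rho$: in~\eqref{eq:v-diff-1} the denominator carries the factor $\PathGF'(V(z))$, and since $\mathcal{S}$ is $p$-periodic one has $\PathGF(\zeta u)=\zeta^{-1}\PathGF(u)$, hence $\PathGF^{(j)}(\zeta\tau)=\zeta^{-(j+1)}\PathGF^{(j)}(\tau)$ for all $j$; in particular $\PathGF'(\zeta\tau)=0$, so $\PathGF'(V(z))$ vanishes like $(1-z/(\zeta\rho))^{1/2}$ as $z\to\zeta\rho$, while $(V(z)-z)^{r}$ tends to the nonzero value $(\zeta(\tau-\rho))^{r}$ since $c=\tau\PathGF(\tau)=\phiop(\tau)=1+\sum_{j}\tau^{1+b_{j}}>1$ forces $\tau\neq\rho$. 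Carrying out the expansion yields, again up to a summand analytic at $\zeta\rho$,
\[
  D_{t}(z) \stackrel{z\to\zeta\rho}{=} \frac{\PathGF(\tau)\,(c-1)^{r}}{2\,c^{r+2}\,(1-\zeta\tau)^{2}}\sqrt{\frac{2\PathGF(\tau)}{\PathGF''(\tau)}}\,\myBigl(1-\frac{z}{\zeta\rho}\myBigr)^{-1/2} + O\myBigl(\myBigl(1-\frac{z}{\zeta\rho}\myBigr)^{1/2}\myBigr).
\]

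Then I would apply the transfer theorem at each of the $p$ dominant singularities and add the contributions, using $\Gamma(-1/2)=-2\sqrt{\pi}$ and $\Gamma(1/2)=\sqrt{\pi}$. The only $\zeta$-dependent quantity that survives the summation is $\sum_{\zeta\in G(p)}\zeta^{-n}(1-\zeta\tau)^{-2}$, which for $n\equiv k\pmod p$ with $0\le k\le p-1$ equals $\sum_{\zeta\in G(p)}\zeta^{-k}(1-\zeta\tau)^{-2}$; differentiating the elementary identity $\sum_{\zeta\in G(p)}\zeta^{-k}/(1-\zeta x)=px^{k}/(1-x^{p})$, which itself follows from~\eqref{eq:sum-of-zetas}, with respect to $x$ and setting $x=\tau$ gives $p\tau^{k}(\tau^{p}(p-k-1)+k+1)/(1-\tau^{p})^{2}$, precisely the $\zeta$-independent factor appearing in~\eqref{eq:dispersed:tau-not-1:number}. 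Collecting the constants, with $\rho=1/\PathGF(\tau)$, $c=\tau\PathGF(\tau)$ and $\sqrt{2\PathGF(\tau)/\PathGF''(\tau)}=d_{1}$ from Proposition~\ref{prop:singular-inversion-expansion}, reproduces~\eqref{eq:dispersed:tau-not-1:number} for $d_{n}$, and the same bookkeeping gives $[z^{n}]D_{t}(z)=\frac{(c-1)^{r}}{c^{r+2}}\,n\,d_{n}\,\big(1+O(n^{-1})\big)$; dividing by $d_{n}$ cancels all square-root and roots-of-unity factors and leaves $\E D_{n,r}=\frac{(c-1)^{r}}{c^{r+2}}\,n+O(1)=\frac{(\tau\PathGF(\tau)-1)^{r}}{(\tau\PathGF(\tau))^{r+2}}\,n+O(1)$, the $O(1)$ error arising from the ratio of the $n^{-1/2}$- and $n^{-3/2}$-corrections of numerator and denominator.

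I expect the main obstacle to be the bookkeeping of the $\zeta$-dependence at all $p$ dominant singularities simultaneously: recognizing that $V_{t}$ is singular at every $\zeta\rho$ rather than only at $\rho$, getting the scaling $\PathGF^{(j)}(\zeta\tau)=\zeta^{-(j+1)}\PathGF^{(j)}(\tau)$ right, and then assembling and evaluating the roots-of-unity sum so that the residue class $k$ enters correctly. The subsequent manipulation of the sub-leading constants---needed to certify the stated error orders $O(\PathGF(\tau)^{n}n^{-5/2})$ and $O(1)$---is routine but heavy, and in keeping with the rest of this article is best delegated to computer algebra.
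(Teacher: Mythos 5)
Your proposal is correct and follows essentially the same route as the paper: singularity analysis of $D(z,1)$ and $D_{t}(z,1)$ at all $p$ dominant singularities $\zeta\rho$, using the singular expansion of $V(z)$ and the expression~\eqref{eq:v-diff-1} for $V_{t}(z)$, followed by summation over $\zeta\in G(p)$; your local expansions and constants all check out against the paper's. The only (immaterial) deviation is that you evaluate $\sum_{\zeta\in G(p)}\zeta^{-n}(1-\zeta\tau)^{-2}$ by differentiating the identity $\sum_{\zeta}\zeta^{-m}/(1-\zeta x)=px^{m}/(1-x^{p})$, whereas the paper expands $(1-\zeta\tau)^{-m}$ via $(1-\zeta\tau)\sum_{j=0}^{p-1}(\tau\zeta)^{j}=1-\tau^{p}$ and applies~\eqref{eq:sum-of-zetas} directly.
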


  In a nutshell, the proof of this theorem involves a rigorous analysis of the generating functions
  $D(z,1)$ (for the overall number of dispersed excursions), as well as of $D_{t}(z,1) =
  \frac{1}{z} \frac{V_{t}(z)}{(1 - V(z))^{2}}$ (for the expected number of ascents in these
  paths). All computations are carried out in the corresponding SageMath worksheet,
  \href{https://benjamin-hackl.at/downloads/lukasiewicz-ascents/lukasiewicz-dispersed-excursions.ipynb}{\texttt{lukasiewicz-dispersed-excursions.ipynb}}. Furthermore,
  while our results as stated in~\eqref{eq:dispersed:tau-not-1:number}
  and~\eqref{eq:dispersed:tau-not-1:expectation} only list the asymptotic main term, expansions
  with higher precision are available in the worksheet as well (they just become rather
  messy very quickly).
  
\begin{proof}
  Before we delve into the analysis, let us recall the setting we have to deal with. As
  the period of the step set $\mathcal{S}$ is $p$, the function $V(z)$ (and
  corresponding derivatives with respect to $t$) has $p$ dominant square root
  singularities, located at $\zeta/\PathGF(\tau)$ with $\zeta \in G(p)$ (see
  Corollary~\ref{cor:V-derivatives}).

  Furthermore, as $\tau \neq 1$, Lemma~\ref{lem:v-is-less-than-1} tells us that the
  singular structure of $D(z,1)$ (and its derivatives with respect to $t$) is directly
  inherited from $V(z)$, meaning that the singularities of $D(z,1)$ are of the same type
  as the singularities of $V(z)$.

  Thus, after rewriting
  \[ D(z,1) = \frac{1}{z}\frac{V(z)}{1 - V(z)} = \frac{1}{z} (
    \frac{1}{1 - V(z)} - 1), \]
  we can use the expansion of $V(z)$ for $z\to
  \zeta/\PathGF(\tau)$ from~\eqref{eq:v-expansion} to compute the expansion for $D(z,1)$
  with $z\to \zeta/\PathGF(\tau)$, yielding
  \[
    D(z,1) \stackrel{z\to \zeta/\PathGF(\tau)}{=} \frac{\tau \PathGF(\tau)}{1 - \tau \zeta} -
    \frac{1}{(1 - \tau\zeta)^{2}} \sqrt{\frac{2 \PathGF(\tau)^{3}}{\PathGF''(\tau)}} \big(1 -
    \frac{z}{\zeta/\PathGF(\tau)}\big)^{1/2} + O\big(1 -
    \frac{z}{\zeta/\PathGF(\tau)}\big).
  \]
  By applying singularity analysis we are able to determine the contribution of the
  singularity located at $\zeta/\PathGF(\tau)$ to the overall growth of the coefficients of
  $D(z,1)$, which can then be obtained by summing up the contributions of all
  singularities on the radius of convergence. In our case, this translates to summing over
  all $p$th roots of unity $\zeta \in G(p)$.

  After doing so, we see that in the main term all roots of unity can be grouped together
  such that we find
  \[ \frac{1}{\sqrt{2\pi}} \myBigl(\sum_{\zeta\in G(p)} \frac{\zeta^{-n}}{(1 - \tau
      \zeta)^{2}}\myBigr) \sqrt{\frac{\PathGF(\tau)^{3}}{\PathGF''(\tau)}} \PathGF(\tau)^{n}n^{-3/2}.  \]
  In fact, when studying these expansions with higher precision, the corresponding sums
  that occur have the shape
  \[ \sum_{\zeta\in G(p)} \frac{\zeta^{\ell - n}}{(1 - \tau \zeta)^{m}}  \]
  for some integers $\ell$, $m \geq 0$. To find an explicit expression for this sum, we
  first recall~\eqref{eq:sum-of-zetas} as well as another elementary property of roots of unity, namely
  \[ (1 - \zeta\tau) \sum_{j=0}^{p-1} (\tau\zeta)^{j} = 1 - \tau^{p}. \]
  Now let $n \equiv k\mod p$ with $0\leq k \leq p-1$. Then we can rewrite
  \[ \sum_{\zeta\in G(p)} \frac{\zeta^{\ell - n}}{(1 - \tau\zeta)^{m}} = \frac{1}{(1 -
      \tau^{p})^{m}} \sum_{\zeta\in G(p)} \zeta^{\ell - k} (1 + \tau\zeta +
    (\tau\zeta)^{2} + \dots + (\tau\zeta)^{p-1})^{m}.  \]
  By~\eqref{eq:sum-of-zetas} we only need to determine those summands
  in $(1 + \tau \zeta + \dots + (\tau\zeta)^{p-1})^{m}$ involving
  $\tau^{j}$ with $0\leq j\leq m(p-1)$ and $j \equiv k-\ell\mod p$. This can
  be done easily for explicitly given values of $m$ and $\ell$, for example
  \[ \sum_{\zeta\in G(p)} \frac{\zeta^{-n}}{(1 - \tau\zeta)^{2}} = \frac{p \tau^{k}
      (\tau^{p}(p-k-1) + k+1)}{(1 - \tau^{p})^{2}}.  \]
  Plugging this into the previously obtained asymptotic expansion for the growth of the coefficients of $D(z,1)$
  yields~\eqref{eq:dispersed:tau-not-1:number}.

  For the expected value we focus on the generating function
  \[ D_{t}(z,1) = \frac{V_{t}(z)}{z} \frac{1}{(1 - V(z))^{2}} \]
  and proceed similarly to
  above. Using~\eqref{eq:v-expansion} and~\eqref{eq:v-diff-1} we are again able to compute
  the singular expansion of $D_{t}(z,1)$ for $z\to \zeta/\PathGF(\tau)$, namely
  \[
    \frac{1}{(1 - \tau\zeta)^{2}} \frac{(\tau \PathGF(\tau) - 1)^{r}}{\PathGF(\tau)^{r} \tau^{r+2}
      \sqrt{2 \PathGF(\tau) \PathGF''(\tau)}} \big(1 - \frac{z}{\zeta/\PathGF(\tau)}\big)^{-1/2} + O(1).
  \]
  Extracting the contributions of the singularity at $\zeta/\PathGF(\tau)$, summing up the
  contributions of all $p$ singularities, and then finally normalizing the result by
  dividing by the overall number of dispersed excursions of length $n$ we arrive
  at~\eqref{eq:dispersed:tau-not-1:expectation}.
\end{proof}

By Lemma~\ref{lem:tau-1}, the only family of \L ukasiewicz paths that is not covered by
Theorem~\ref{thm:dispersed:tau-not-1} is $\mathcal{S} = \{-1, 1\}$, the case of dispersed
Dyck paths. However, as everything is explicitly given, the analysis is quite straightforward.

\begin{proposition}\label{prop:ascents-dispersed-dyck}
  Let $d_{n}$ denote the total number of dispersed Dyck paths of length $n$, and let
  $D_{n,r}$ denote the random variable modeling the number of $r$-ascents in a random
  dispersed Dyck path of length $n$.

  Then, $d_{n}$ is given by
  \begin{equation}\label{eq:dyck-dispersed-number}
    d_{n} = \binom{n}{\lfloor n/2 \rfloor} = \sqrt{\frac{2}{\pi}} 2^{n} n^{-1/2} - \frac{2
      - (-1)^{n}}{2\sqrt{2\pi}} 2^{n} n^{-3/2} + O(2^{n} n^{-5/2}),
  \end{equation}
  and the expected number of $r$-ascents satisfies
  \begin{align}
    \begin{split}\label{eq:dispersed:expectation:dyck}
      \E D_{n,r} & = \frac{n}{2^{r+2}} - \sqrt{\frac{\pi}{2}} \frac{r-2}{2^{r+2}} n^{1/2} +
      \frac{(r-1)(r-4)}{2^{r+3}} \\ & \qquad  - \sqrt{\frac{\pi}{2}} \frac{(r-2)(2 - (-1)^{n})}{2^{r+4}}
      n^{-1/2} + O(n^{-1}).
    \end{split}
  \end{align}
\end{proposition}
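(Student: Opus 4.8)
Since $\mathcal{S} = \{-1, 1\}$ is precisely the step set excluded from Theorem~\ref{thm:dispersed:tau-not-1} --- here $\tau = 1$, so the factor $1 - V(z)$ in the denominator of $D(z,1)$ now vanishes at the dominant singularity $z = \rho = 1/2$ --- the plan is to exploit that for Dyck paths all relevant generating functions are explicit. Recall from Example~\ref{example:r-ascents-dyck} that $V(z) := V(z,1)$ satisfies $V(z)/z = (1 - \sqrt{1-4z^{2}})/(2z^{2})$, and write $w := \sqrt{1-4z^{2}}$ throughout.

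First I would handle the counting sequence $d_{n}$. Plugging the closed form of $V(z)$ into $D(z,1) = \frac{1}{z}\frac{V(z)}{1 - V(z)}$ from~\eqref{eq:dispersed:gf} and rationalizing (multiply numerator and denominator by $2z - 1 - w$ and use $w^{2} = 1 - 4z^{2}$), one collapses $D(z,1)$ to the closed form $D(z,1) = \frac{1}{2z}\bigl(\sqrt{(1+2z)/(1-2z)} - 1\bigr)$. Comparing with the classical expansions $\sum_{n\ge 0}\binom{2n}{n} z^{2n} = (1-4z^{2})^{-1/2}$ and $\sum_{n\ge 0}\binom{2n+1}{n} z^{2n+1} = \frac{1}{2z}\bigl((1-4z^{2})^{-1/2} - 1\bigr)$ (equivalently, splitting $\sqrt{(1+2z)/(1-2z)}$ into its even and odd parts) identifies $d_{n} = \binom{n}{\lfloor n/2\rfloor}$, which is the first equality in~\eqref{eq:dyck-dispersed-number}. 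Its asymptotic expansion then follows from Stirling's formula applied separately to the even- and odd-indexed subsequences; the two cases recombine into the parity factor $2 - (-1)^{n}$. Alternatively, one may apply singularity analysis directly to the closed form of $D(z,1)$: the singularity at $z = 1/2$ is of type $(1-2z)^{-1/2}$ and produces the $2^{n} n^{-1/2}$ main term, while the singularity at $z = -1/2$ is of type $(1+2z)^{1/2}$ and produces the $(-1)^{n} 2^{n} n^{-3/2}$ correction.

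For the expected value I would differentiate $D(z,t) = \frac{1}{z}\frac{V(z,t)}{1 - V(z,t)}$ with respect to $t$, which gives $D_{t}(z,1) = \frac{1}{z}\, V_{t}(z)\, (1 - V(z))^{-2}$. Using the explicit $V_{t}(z)/z$ from~\eqref{eq:ascents:dyck:gf-expectation} together with $(1 - V(z))^{2} = (1-2z)(1-w)/(2z^{2})$ --- again read off from the closed form of $V(z)$ --- this simplifies to
\[ D_{t}(z,1) = \frac{z^{2}\,(1-w)^{r-1}(1+w)}{2^{r}\, w\,(1-2z)}. \]
Its dominant singularities are once more at $z = \pm\frac12$: near $z = \frac12$ one has $w \sim \sqrt{2}\,(1-2z)^{1/2}$, so the local expansion is of type $(1-2z)^{-3/2}$, while near $z = -\frac12$ it is of type $(1+2z)^{1/2}$. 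Singularity analysis (cf.~\cite[Chapter VI]{Flajolet-Sedgewick:ta:analy}) then yields a full asymptotic expansion of $[z^{n}] D_{t}(z,1)$ as $2^{n}$ times a series in $n^{-1/2}$, with the parity-dependent terms originating from the singularity at $z = -\frac12$.

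Finally, $\E D_{n,r} = [z^{n}] D_{t}(z,1) / d_{n}$: dividing the two expansions obtained above (both of the form $2^{n}$ times a series in $n^{-1/2}$ whose parity-dependent parts must be tracked separately) and expanding the quotient produces~\eqref{eq:dispersed:expectation:dyck}. I expect no conceptual obstacle here; as in the preceding theorems, the only real work is propagating enough terms through the local expansions of $D_{t}(z,1)$ at both singularities and through the division by $d_{n}$ while keeping the residue classes ``$n$ even'' and ``$n$ odd'' apart, which is tedious but routine and which I would carry out with the SageMath asymptotics package as before.
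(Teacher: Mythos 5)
Your overall route is the same as the paper's: exploit the explicit closed forms for $V(z)$ and $V_{t}(z)$, collapse $D(z,1)$ to $\frac{1}{2z}\bigl(\sqrt{(1+2z)/(1-2z)}-1\bigr)$, identify $d_{n}=\binom{n}{\lfloor n/2\rfloor}$, and then run singularity analysis on $D_{t}(z,1)$ at the two dominant singularities $z=\pm\tfrac12$ before normalizing. Your simplified expression $D_{t}(z,1)=\frac{z^{2}(1-w)^{r-1}(1+w)}{2^{r}w(1-2z)}$ is correct and equivalent to the form the paper uses, and your treatment of $d_{n}$ is fine.

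There is, however, one concrete error: the singularity of $D_{t}(z,1)$ at $z=-\tfrac12$ is of type $(1+2z)^{-1/2}$, not $(1+2z)^{1/2}$. Writing $(1-w)^{r-1}(1+w)=P(w^{2})+wQ(w^{2})$ with polynomials $P,Q$, one has
\begin{equation*}
  D_{t}(z,1)=\frac{z^{2}}{2^{r}(1-2z)}\myBigl(\frac{P(1-4z^{2})}{w}+Q(1-4z^{2})\myBigr),
\end{equation*}
and since $P(0)=1\neq 0$ the term $P/w\sim\tfrac{1}{\sqrt2}(1+2z)^{-1/2}$ near $z=-\tfrac12$ (the situation differs from $D(z,1)$, where the odd prefactor $\frac1{2z}\sqrt{1+2z}\,(\dots)$ really does give a $(1+2z)^{1/2}$ singularity). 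This is not a harmless slip at the stated precision: the $(1+2z)^{-1/2}$ singularity contributes a term of order $(-1)^{n}2^{n}n^{-1/2}$ to $[z^{n}]D_{t}(z,1)$, which after division by $d_{n}\sim\sqrt{2/\pi}\,2^{n}n^{-1/2}$ enters $\E D_{n,r}$ already at the constant term. Indeed, this contribution is precisely what cancels the parity dependence that the division by $d_{n}$ (whose $n^{-3/2}$ term carries the factor $2-(-1)^{n}$) would otherwise inject into the constant term; with your claimed $(1+2z)^{1/2}$ type you would obtain a spurious parity-dependent constant instead of $\frac{(r-1)(r-4)}{2^{r+3}}$, and the $(2-(-1)^{n})$ factor in the $n^{-1/2}$ term of \eqref{eq:dispersed:expectation:dyck} would also come out wrong. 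A mechanical computation (e.g.\ with the SageMath package, as you propose) would of course produce the correct local expansion at $z=-\tfrac12$ automatically, but as written this step of your argument fails.
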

\begin{proof}
  In the case where $\tau = 1$, the zero in the denominator of $\frac{1}{1 - V(z)}$ combines with the
  square root singularity from $V(z)$ itself. From Example~\ref{example:r-ascents-dyck} we
  know that
  \[ V(z) = \frac{1 - \sqrt{1 - 4z^{2}}}{2z}\quad\text{ and }\quad V_{t}(z) = z \frac{(1 -
      \sqrt{1 - 4z^{2}}\,)^{r} (1 + \sqrt{1 - 4z^{2}}\,)}{2^{r+1} \sqrt{1 - 4z^{2}}}.  \]
  The number $d_{n}$ of dispersed Dyck paths of length $n$ can be read off as the
  coefficients of
  \[ D(z,1) = \frac{1}{z} \frac{V(z)}{1 - V(z)} = \frac{1}{2z} \myBigl(\sqrt{\frac{1 + 2z}{1
        - 2z}} - 1\myBigr)  = \frac{1}{\sqrt{1 - 4z^{2}}} + \frac{1}{2z}
    \myBigl(\frac{1}{\sqrt{1 - 4z^{2}}} - 1\myBigr). \]
  This proves~\eqref{eq:dyck-dispersed-number}, where the asymptotic part can be obtained
  by means of singularity analysis. The explicit formula for $d_{n}$
  in~\eqref{eq:dyck-dispersed-number} is also stated in~\cite[Lemma
  2]{Kangro-Pourmoradnasseri-Theis:2016:ascents-dispersed-dyck}.

  For the expected number of $r$-ascents, we consider
  \[ D_{t}(z,1) = \frac{1}{z} \frac{V_{t}(z)}{(1 - V(z))^{2}} = \frac{(1 - \sqrt{1 - 4z^{2}})^{r} (1 -
      4z^{2} + \sqrt{1 - 4z^{2}} (1 - 2z^{2}))}{2^{r+1} (1 + 2z)(1 - 2z)^{2}}.  \]
  Just as before, the coefficients of this function can also be extracted by means of
  singularity analysis; the dominant singularities can be found at $z = \pm
  1/2$. Extracting the coefficients and dividing by $d_{n}$
  yields~\eqref{eq:dispersed:expectation:dyck}.
\end{proof}

This completes our analysis of $r$-ascents in dispersed \L ukasiewicz excursions.

\subsection{Analysis of Meanders}\label{sec:ascents:meanders}

In this section we study ascents in meanders, i.e., non-negative \L ukasiewicz paths
without further restriction. The corresponding generating function can be obtained
from~\eqref{eq:lukasiewicz-ascents:p-plus} by setting $v = 1$, which allows arbitrary ending
altitude of the path.

In accordance to the results from~\cite[Theorem 4]{Banderier-Flajolet:2002:lat-path}, the
behavior of meanders depends on the sign of the drift (i.e., the quantity $\PathGF'(1)$). The
following theorem handles the case of positive drift (which, in our setting, is equivalent
to $\tau \neq 1$).

Recall that $M_{n,r}$ is the random variable modeling the number of $r$-ascents in a
random non-negative \L ukasiewicz path of length $n$ with respect to some given step set
$\mathcal{S}$.

\begin{theorem}\label{thm:meanders:tau-not-1}
  Let $\tau > 0$ be the structural constant, i.e., the unique positive
  solution of $\PathGF'(\tau) = 0$, and assume that $\tau \neq 1$.

  Then, with $\xi = 1/\PathGF(1)$, the expected number of $r$-ascents in \L ukasiewicz meanders
  of length $n$ as well
  as the corresponding variance grow with $n\to\infty$ according to the asymptotic
  expansions
  \begin{multline}\label{eq:meanders-expectation}
      \E M_{n,r} = \mu n + \frac{(\PathGF(1) - 1)^{r} (2
        \PathGF(1) - 1 - r)}{\PathGF(1)^{r+2}} + \frac{(\PathGF(1) - 1)^{r} V_{z}(\xi)}{\PathGF(1)^{r+1}(1 -
        V(\xi))} - \frac{V_{t}(\xi)}{1 - V(\xi)}
      \\ + O\myBigl(n^{5/2} \myBigl(\frac{\PathGF(\tau)}{\PathGF(1)}\myBigr)^{n}\myBigr),
  \end{multline}
  and
  \begin{equation}\label{eq:meanders-variance}
    \V M_{n,r} = \sigma^2 n + O(1),
  \end{equation}
  where $\mu$ and $\sigma^2$ are given by
  \[ \mu = \frac{(\PathGF(1) - 1)^{r}}{\PathGF(1)^{r+2}} \quad \text{ and }\quad
    \sigma^2 = \frac{(\PathGF(1)-1)^{r}}{\PathGF(1)^{r+2}} + \frac{(\PathGF(1)-1)^{2r}(2\PathGF(1) - 3 - 2r)}{\PathGF(1)^{2r+4}}.
  \]
  Moreover, for $n\to\infty$, $M_{n,r}$ is asymptotically normally distributed, i.e., for
  $x\in \R$ we have
  \[
    \P\myBigl(\frac{M_{n,r} - \mu n}{\sqrt{\sigma^{2} n}} \leq x \myBigr) =
    \frac{1}{\sqrt{2\pi}} \int_{-\infty}^{x} e^{-t^{2}/2}~dt + O(n^{-1/2}).
  \]
  All $O$-constants depend implicitly on $r$.
\end{theorem}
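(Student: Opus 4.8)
The plan is to set $v=1$ in the trivariate generating function of Proposition~\ref{prop:lukasiewicz-ascents}, analyse the resulting bivariate function $F(z,t,1)$ by meromorphic coefficient asymptotics for the moments, and treat the Gaussian limit law separately via the quasi-powers framework. First I would substitute $\PathGF_{+}(1)=\PathGF(1)-1$ into~\eqref{eq:lukasiewicz-ascents:p-plus}; since $L(z,1,1)=1/(1-z(\PathGF(1)-1))$, a short simplification collapses the factor $L(z,t,1)/(1-zL(z,t,1))$ and gives the clean meander generating function
\[ F(z,1,1)=\frac{1-V(z)}{1-z\PathGF(1)}, \]
which has a simple pole at $z=\xi=1/\PathGF(1)$. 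Because $\PathGF$ is strictly convex on $(0,\infty)$ with minimum at $\tau$, the hypothesis $\tau\neq 1$ forces $\PathGF(\tau)<\PathGF(1)$, hence the square-root radius $\rho=1/\PathGF(\tau)$ of $V(z)$ satisfies $\rho>\xi$. Thus $V(z)$ is analytic at $\xi$, the pole at $\xi$ is strictly dominant, and the square-root singularities of $V$ at $\zeta\rho$, $\zeta\in G(p)$ (Corollary~\ref{cor:V-derivatives}), are exponentially subdominant. Meromorphic coefficient extraction together with the transfer of the $V$-singularities via Proposition~\ref{prop:singularities-roots-of-unity} then yields the number of meanders as $[z^{n}]F(z,1,1)=(1-V(\xi))\PathGF(1)^{n}+O(\PathGF(\tau)^{n}n^{-3/2})$, and in particular $1-V(\xi)>1-\tau\geq 0$ by Lemma~\ref{lem:tau-1}.

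For the moments I would differentiate $F(z,t,1)$ in $t$ at $t=1$. With $g(z,t)=L(z,t,1)/(1-zL(z,t,1))$ one has $g(z,1)=1/(1-z\PathGF(1))$ and $\partial_{t}g|_{t=1}=(z(\PathGF(1)-1))^{r}/(1-z\PathGF(1))^{2}$, so that
\[ F_{t}(z,1,1)=-\frac{V_{t}(z)}{1-z\PathGF(1)}+\frac{(1-V(z))\,(z(\PathGF(1)-1))^{r}}{(1-z\PathGF(1))^{2}}, \]
where $V_{t}(z)$ is given explicitly by~\eqref{eq:v-diff-1} and is analytic at $\xi$ (but carries a $(1-z/\rho)^{-1/2}$ singularity at $\rho$, since $\PathGF'(V(\rho))=\PathGF'(\tau)=0$). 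The double pole at $\xi$ produces the linear term $\mu n$ with $\mu=(\PathGF(1)-1)^{r}/\PathGF(1)^{r+2}$; its subdominant part, involving $V_{z}(\xi)$, together with the simple-pole contribution $-V_{t}(\xi)\PathGF(1)^{n}$, gives the stated constant after division by $(1-V(\xi))\PathGF(1)^{n}$ — which is exactly the origin of the factor $1/(1-V(\xi))$ appearing there. The error is governed by the $V$-singularities at $\zeta\rho$ and is exponentially small, of order $(\PathGF(\tau)/\PathGF(1))^{n}$ up to a power of $n$. The variance is obtained in the same way from $F_{tt}(z,1,1)$, which has a triple pole at $\xi$ contributing an $n^{2}$-term to the second factorial moment; plugging everything into $\V M_{n,r}=\E(M_{n,r}(M_{n,r}-1))+\E M_{n,r}-(\E M_{n,r})^{2}$, the $n^{2}$-contributions cancel and one is left with $\sigma^{2}n+O(1)$. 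These are routine but lengthy manipulations of truncated asymptotic expansions, carried out with SageMath.

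For asymptotic normality I would invoke the meromorphic quasi-powers schema (Hwang's theorem, \cite[Ch.~IX]{Flajolet-Sedgewick:ta:analy}). For $u$ in a fixed complex neighbourhood of $1$, the equation $1-zL(z,u,1)=0$, whose solutions are algebraic (indeed rational in $z$ after clearing $1-z(\PathGF(1)-1)$), has a unique simple root $\varrho(u)$ near $\xi$ by the implicit function theorem (the $z$-derivative of $1-zL(z,1,1)$ at $\xi$ is nonzero, because the numerator $1-z\PathGF(1)$ vanishes there), analytic in $u$ with $\varrho(1)=\xi$. Fixing $R$ with $\xi<R<\rho$, the function $z\mapsto V(z,u)$ remains analytic on $\lvert z\rvert\le R$ for $u$ sufficiently close to $1$ (continuity of its dominant square-root singularity), and $1-V(\varrho(u),u)\neq 0$; hence $z\mapsto F(z,u,1)$ is meromorphic on $\lvert z\rvert\le R$ with $\varrho(u)$ as its unique (simple) pole. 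Consequently $[z^{n}]F(z,u,1)=c(u)\,\varrho(u)^{-n-1}(1+O(A^{n}))$ for some $A<1$, uniformly for $u$ near $1$, with $c$ and $\varrho$ analytic; the quasi-powers theorem then applies, provided the variability condition $\sigma^{2}\neq 0$ holds, which is confirmed by the explicit formula for $\sigma^{2}$. This yields the Gaussian limit law with the stated $O(n^{-1/2})$ speed of convergence, and the mean $-n\varrho'(1)/\varrho(1)$ and the corresponding second-order expression reproduce $\mu n$ and $\sigma^{2}n$ from above, providing an internal consistency check.

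I expect the main obstacle to be two-fold. On the technical side, it is the bookkeeping needed to isolate the constant term of $\E M_{n,r}$ (several singular contributions arise at the same order and must be combined with the correct $V_{z}(\xi)$- and $V_{t}(\xi)$-dependence) and to verify the cancellation of the $n^{2}$-terms in $\V M_{n,r}$. On the conceptual side, it is establishing the uniform meromorphy hypothesis for the quasi-powers argument: one must check that, for $u$ in a full neighbourhood of $1$, the pole $\varrho(u)$ is the \emph{only} singularity of $F(z,u,1)$ in a disk of radius $R>\xi$ — in particular that the square-root singularities of $V(z,u)$ (and any further root of the kernel $1-zL(z,u,1)$) stay outside that disk.
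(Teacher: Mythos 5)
Your overall route is the same as the paper's: set $v=1$, observe that $F(z,1,1)=(1-V(z))/(1-z\PathGF(1))$ has a strictly dominant simple pole at $\xi=1/\PathGF(1)$ because $\tau\neq1$ forces $\PathGF(\tau)<\PathGF(1)$, extract the factorial moments from the $t$-derivatives at $t=1$, and get the Gaussian law from a meromorphic perturbation/quasi-powers argument (the paper uses Rouch\'e's theorem plus \cite[Theorem IX.9]{Flajolet-Sedgewick:ta:analy} to guarantee the single movable pole for $\abs{t-1}$ small, which is exactly the uniform-meromorphy point you correctly flag as the remaining thing to check).

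There is, however, a concrete computational error in your first derivative. Writing $c=\PathGF(1)-1$ and $g(z,t)=L(z,t,1)/(1-zL(z,t,1))$, you have $\partial_t g\big|_{t=1}=L_t/(1-zL)^2\big|_{t=1}$ with $L_t(z,1,1)=(cz)^r$ \emph{and} $1-zL(z,1,1)=(1-z\PathGF(1))/(1-cz)$, so
\[
\partial_t g\big|_{t=1}=\frac{(cz)^{r}(1-cz)^{2}}{(1-z\PathGF(1))^{2}},
\]
whereas your expression omits the factor $(1-cz)^{2}$. Since $(1-c\xi)^{2}=1/\PathGF(1)^{2}$, carrying your formula through the double-pole extraction would give $\mu=(\PathGF(1)-1)^{r}/\PathGF(1)^{r}$ instead of the stated $(\PathGF(1)-1)^{r}/\PathGF(1)^{r+2}$, i.e.\ it contradicts the theorem you are proving; the same missing factor would corrupt the constant term and the second factorial moment. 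The paper's formula $\partial_t F(z,t,1)|_{t=1}=\frac{(cz)^{r}(cz-1)^{2}(1-V(z))}{(1-z\PathGF(1))^{2}}-\frac{V_t(z)}{1-z\PathGF(1)}$ has the factor in place. Once this is corrected, the rest of your plan (cancellation of the $n^{2}$ terms in the variance, error terms of order $(\PathGF(\tau)/\PathGF(1))^{n}$ times a power of $n$ from the square-root singularities at $\zeta\rho$) goes through as in the paper.
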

\begin{proof}
  Just as in the analysis of excursions and dispersed excursions, the first quantity we
  require is $m_{n}$, the total number of meanders of length $n$ associated to
  $\mathcal{S}$. Setting $v = t = 1$ in~\eqref{eq:lukasiewicz-ascents:p-plus} and simplification yields
  \[ F(z,1,1) = \frac{1 - V(z)}{1 - z\PathGF(1)}.  \]
  From Corollary~\ref{cor:V-derivatives} we know that $V(z)$ has radius of
  convergence $1/\PathGF(\tau)$. As $\PathGF(u)$ is strictly convex for $u > 0$ and $\tau$ solves
  $\PathGF'(\tau) = 0$, this means that $\PathGF(\tau) < \PathGF(u)$ for all $u > 0$ with $u \neq
  \tau$. Hence, as $1/\PathGF(1) < 1/\PathGF(\tau)$, the dominant singularity of $F(z,1,1)$ is
  the simple pole located at $\xi := 1/\PathGF(1)$. Extracting coefficients yields
  \begin{equation}\label{eq:meanders-count}
    m_{n} = [z^{n}]F(z,1,1) = (1 - V(\xi)) \PathGF(1)^{n} + O(n^{-3/2} \PathGF(\tau)^{n}).
  \end{equation}
  The error term can directly be deduced from the fact that the next relevant singularity
  of $F(z,1,1)$ is of square-root type with modulus $1/\PathGF(\tau)$ (coming from
  $V(z)$). Observe that~\eqref{eq:meanders-count} could also have been obtained by
  applying~\cite[Theorem 4]{Banderier-Flajolet:2002:lat-path} for our given step set
  $\mathcal{S}$.

  In order to determine the expectation $\E M_{n,r}$, we differentiate $F(z,t,1)$ with
  respect to $t$, set $t = 1$, and then extract the coefficients of the resulting
  generating function. By construction, these coefficients are $m_{n}\cdot \E M_{n,r}$,
  allowing us to obtain an asymptotic expansion after normalizing the result.

  Carrying out the computations leads to
  \[ \frac{\partial }{\partial t} F(z,t,1)\big|_{t=1} = \frac{(cz)^{r}(cz - 1)^{2} (1
      - V(z))}{(1 - z\PathGF(1))^{2}} - \frac{V_{t}(z)}{1 - z\PathGF(1)},  \]
  where, for the sake of brevity, we define $c := \PathGF(1) - 1 = \PathGF_{+}(1)$. Determining the
  growth of the coefficients then gives
  \begin{multline*}
    \frac{c^{r}(1 - V(\xi))}{\PathGF(1)^{r+2}} n \PathGF(1)^{n} + \myBigl(\frac{(1 - V(\xi)) c^{r} (2
      \PathGF(1) - 1 - r) + c^{r} \PathGF(1) V_{z}(\xi)}{\PathGF(1)^{r+2}} - V_{t}(\xi)\myBigr) \PathGF(1)^{n} \\+
    O(n^{-1/2} \PathGF(\tau)^{n}),
    \end{multline*}
  which, after dividing by the expansion of $m_{n}$,
  proves~\eqref{eq:meanders-expectation}.

  Analogously, for the second partial derivative of $F(z,t,1)$ with respect to $t$, we find
  \[ \frac{\partial^{2} }{\partial t^{2}} F(z,t,1)\big|_{t=1} = -\frac{2z (cz)^{2r}
      (cz - 1)^{3} (1 - V(z))}{(1 - z\PathGF(1))^{3}} - \frac{2 (cz)^{r} (cz - 1)^{2}
      V_{t}(z)}{(1 - z\PathGF(1))^{2}} - \frac{V_{tt}(z)}{1 - z\PathGF(1)},  \]
  allowing us to determine the asymptotic growth of the unnormalized second factorial
  moment, $m_{n} \E (M_{n,r} (M_{n,r} - 1))$. Dividing by $m_{n}$ and computing the
  variance by means of $\V M_{n,r} = \E(M_{n,r} (M_{n,r}-1)) + \E M_{n,r} - (\E
  M_{n,r})^{2}$ then yields~\eqref{eq:meanders-variance}.

  In order to prove that $M_{n,r}$ is asymptotically normally distributed, we observe that
  \[ F(z,t,1) = \frac{(1 - V(z,t))(1 + (t-1)(1 - cz)(cz)^{r})}{1 - z\PathGF(1) - (t-1)(1 -
      cz)(cz)^{r}} \]
  has a unique simple pole at $z = 1/\PathGF(1)$ for $t = 1$ and, by Rouch\'e's theorem for
  $\abs{z} < \rho$, it has a single pole for sufficiently small $\abs{t-1}$. This allows
  us to apply the theorem on singularity pertubation for meromorphic
  functions~\cite[Theorem IX.9]{Flajolet-Sedgewick:ta:analy}, which proves the normal
  limiting distribution and thus concludes this proof.
\end{proof}
\begin{remark}[Computation of constants]
  Within~\eqref{eq:meanders-expectation} and~\eqref{eq:meanders-variance}, the asymptotic
  expansions for the expected number and variance of $r$-ascents in meanders, constants of
  the type $V(\xi)$, $V_{z}(\xi)$, $V_{t}(\xi)$, where $\xi = 1/\PathGF(1)$, occur. For higher
  precision than in the theorem, also higher derivatives (as well as mixed derivatives) occur.

  Although the function $V(z,t)$ is only given implicitly, by the following observation
  all of those constants can actually be computed.
  By taking the functional equation~\eqref{eq:lukasiewicz-gf-implicit} and rewriting it as
  \[ \frac{1}{z} = \PathGF(V(z)),  \]
  we can see that for $z = 1/\PathGF(1)$ we obtain the relation
  \[ \PathGF(1) = \PathGF\myBigl(V\myBigl(\frac{1}{\PathGF(1)}\myBigr)\myBigr). \]
  Then,
  because we know that $V(1/\PathGF(1)) > 0$ and that $\PathGF(u)$ is strictly convex for $u > 0$, the
  constant can be determined as the unique positive solution of $\PathGF(u) = \PathGF(1)$ satisfying $u
  \neq 1$.

  For determining the value of the constants involving derivatives, we make use of the
  same approach as used in Corollary~\ref{cor:V-derivatives}. By means of implicit
  differentiation we are able to rewrite any derivative of the form
  $\frac{\partial^{\nu_{1}+\nu_{2}}}{\partial t^{\nu_{1}} \partial z^{\nu_{2}}} V(z,t)|_{t=1}$ in terms of $V(z)$,
  allowing us to express all constants in terms of $V(\xi)$.
\end{remark}

By Lemma~\ref{lem:tau-1}, Theorem~\ref{thm:meanders:tau-not-1} covers all step sets except
for $\mathcal{S} = \{-1, 1\}$ and $\mathcal{S} = \{-1, 0, 1\}$. In these cases, we
have a similar situation to what we had in Section~\ref{sec:ascents:dispersed}: the square
root singularity coming from $V(z)$ combines with the zero in the denominator.

The following propositions close this gap.
\begin{proposition}\label{prop:meanders:dyck}
  The expected number of
  $r$-ascents in the \L ukasiewicz meanders of length $n$ associated to $\mathcal{S} =
  \{-1, 1\}$ as
  well as the corresponding variance grow with $n\to\infty$ according to the asymptotic
  expansions
  \begin{equation}\label{eq:meanders-expectation:dyck}
    \E M_{n,r} = \frac{n}{2^{r+2}} + \frac{\sqrt{2\pi} (r-2)}{2^{r+3}} n^{1/2} -
    \frac{r^{2} - r - 8}{2^{r+3}} + \frac{\sqrt{2\pi} ((2 - (-1)^{n})(r-2)}{2^{r+5}} n^{-1/2} + O(n^{-1}),
  \end{equation}
  and
  \begin{multline}\label{eq:meanders-variance:dyck}
    \V M_{n,r} = \frac{2^{r+3} - r^{2}(\pi - 2) + 4r (\pi - 3) - 4\pi + 10}{2^{2r+5}} n \\
    + \frac{\sqrt{2\pi} (2^{r+2} (r-2) - r^{3} + 3r^{2} - 2r + 4)}{2^{2r+5}} n^{1/2} + O(1).
  \end{multline}
\end{proposition}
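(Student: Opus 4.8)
The plan is to rerun the scheme from the proof of Theorem~\ref{thm:meanders:tau-not-1}, the only difference being that for $\mathcal{S} = \{-1, 1\}$ every object involved is available explicitly. Here $\PathGF(u) = u^{-1} + u$, so that $\PathGF(1) = 2$, $\tau = 1$, $\rho = 1/2$, and $c := \PathGF(1) - 1 = \PathGF_{+}(1) = 1$; by Example~\ref{example:r-ascents-dyck} we already have
\[ V(z) = \frac{1 - \sqrt{1 - 4z^{2}}}{2z}, \qquad V_{t}(z) = z\,\frac{(1 - \sqrt{1 - 4z^{2}})^{r}\,(1 + \sqrt{1 - 4z^{2}})}{2^{r+1}\sqrt{1 - 4z^{2}}}, \]
and $V_{tt}(z)$ results from differentiating the functional equation~\eqref{eq:V-combinatorial-func} twice with respect to $t$ and then setting $t = 1$, as in Corollary~\ref{cor:V-derivatives}. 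The feature that distinguishes this case from Theorem~\ref{thm:meanders:tau-not-1} is that the simple pole of the meander generating function at $z = 1/\PathGF(1) = 1/2$ now coincides with a square-root singularity of $V(z)$, so that all relevant functions acquire half-integer singular exponents there; in addition there is a second dominant singularity at $z = -1/2$, which is responsible for the $(-1)^{n}$ terms appearing in~\eqref{eq:meanders-expectation:dyck} and~\eqref{eq:meanders-variance:dyck}.

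I would first record the meander count. Setting $v = t = 1$ in~\eqref{eq:lukasiewicz-ascents:p-plus} and simplifying gives
\[ F(z,1,1) = \frac{1 - V(z)}{1 - 2z} = \frac{1}{2z}\myBigl(\sqrt{\frac{1 + 2z}{1 - 2z}} - 1\myBigr), \]
which near $z = 1/2$ behaves like $\sqrt{2}\,(1 - 2z)^{-1/2}$ and near $z = -1/2$ like a constant multiple of $(1 + 2z)^{1/2}$; singularity analysis at both points therefore yields $m_{n} = [z^{n}] F(z,1,1) = \sqrt{2/\pi}\,2^{n} n^{-1/2} + \dots$ to the precision needed. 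Next I would specialize the formula for $\frac{\partial}{\partial t} F(z,t,1)\big|_{t=1}$ derived in the proof of Theorem~\ref{thm:meanders:tau-not-1} to $c = 1$, namely
\[ \frac{\partial}{\partial t} F(z,t,1)\Big|_{t=1} = \frac{z^{r}(z - 1)^{2}(1 - V(z))}{(1 - 2z)^{2}} - \frac{V_{t}(z)}{1 - 2z}, \]
plug in the closed forms for $V(z)$ and $V_{t}(z)$, expand locally around $z = 1/2$ (where the leading singular exponent is $-3/2$) and around $z = -1/2$, and apply singularity analysis; since the coefficients of this function are $m_{n}\,\E M_{n,r}$, dividing by the expansion of $m_{n}$ produces~\eqref{eq:meanders-expectation:dyck}.

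For the variance I would proceed identically with
\[ \frac{\partial^{2}}{\partial t^{2}} F(z,t,1)\Big|_{t=1} = -\frac{2z^{2r+1}(z - 1)^{3}(1 - V(z))}{(1 - 2z)^{3}} - \frac{2z^{r}(z - 1)^{2} V_{t}(z)}{(1 - 2z)^{2}} - \frac{V_{tt}(z)}{1 - 2z}, \]
extract the second factorial moment $\E\bigl(M_{n,r}(M_{n,r} - 1)\bigr)$ by singularity analysis and normalization by $m_{n}$, and then assemble $\V M_{n,r} = \E\bigl(M_{n,r}(M_{n,r}-1)\bigr) + \E M_{n,r} - (\E M_{n,r})^{2}$ to obtain~\eqref{eq:meanders-variance:dyck}. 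The main obstacle is computational rather than conceptual: the singular exponent at $z = 1/2$ climbs to $-5/2$ in the second-derivative term, so the local expansions must be carried to several orders; both dominant singularities at $z = \pm 1/2$ have to be tracked simultaneously to recover the alternating contributions; and when forming $\E\bigl(M_{n,r}(M_{n,r}-1)\bigr) - (\E M_{n,r})^{2}$ the quadratic (and, in the unnormalized intermediate steps, higher) terms in $n$ must cancel exactly, leaving a genuinely linear variance. As elsewhere in the paper, this bookkeeping is carried out with SageMath; see the worksheet \href{https://benjamin-hackl.at/downloads/lukasiewicz-ascents/lukasiewicz-meanders.ipynb}{\texttt{lukasiewicz-meanders.ipynb}}.
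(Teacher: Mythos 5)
Your proposal is correct and follows essentially the same route as the paper's own proof: explicit $V(z)$ and $V_t(z)$, the closed form $F(z,1,1)=\frac{1-V(z)}{1-2z}$ with dominant singularities at $z=\pm 1/2$, the same specialized formulas for $\partial_t F(z,t,1)|_{t=1}$ and $\partial_t^2 F(z,t,1)|_{t=1}$, singularity analysis at both singularities, normalization by $m_n$, and the standard second-factorial-moment identity for the variance. The only (immaterial) difference is that you track the local singular exponents a bit more explicitly than the paper does.
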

\begin{proof}
  The analysis of ascents in this case is pretty much straightforward, as\footnote{See also Example~\ref{example:r-ascents-dyck}.}
  \[ V(z) = \frac{1 - \sqrt{1 - 4z^{2}}}{2z}, \]
  and
  therefore all generating functions involved in this analysis are given explicitly.

  The total number of meanders can now either be obtained by following~\cite[Theorem
  4]{Banderier-Flajolet:2002:lat-path}, or simply by extracting the coefficients of
  \[ F(z,1,1) = - \frac{1 - 2z - \sqrt{1 - 4z^{2}}}{2z (1 - 2z)},  \]
  where the dominant singularities are located at $z = \pm 1/2$. In any case, we find that
  for $n \to \infty$ there are
  \begin{equation}\label{eq:meanders:dyck-number-of}
    \sqrt{\frac{2}{\pi}} 2^{n} n^{-1/2} - \sqrt{\frac{2}{\pi}} \frac{2 - (-1)^{n}}{4}
    2^{n} n^{-3/2} + \sqrt{\frac{2}{\pi}} \frac{13 - 12 (-1)^{n}}{32} 2^{n} n^{-5/2} +
    O(2^{n} n^{-7/2})
  \end{equation}
  meanders with steps $\mathcal{S} = \{-1, 1\}$ of length $n$.

  Then, by plugging in
  \[ V_{t}(z) = \frac{z (1 - \sqrt{1 - 4z^{2}})^{r} (1 + \sqrt{1 - 4z^{2}})}{2^{r+1}
      \sqrt{1  - 4z^{2}}} \]
  and the formula for $V(z)$ into
  \[ \frac{\partial }{\partial t}F(z,t,1) \big|_{t=1} = \frac{z^{r} (1 - z)^{2} (1 -
      V(z))}{(1 - 2z)^{2}} - \frac{V_{t}(z)}{1 - 2z},  \]
  we have an explicit representation of the generating function for the expected number of
  $r$-ascents before normalization. Extracting the coefficients by means of singularity
  analysis (the location $z = \pm 1/2$ of the dominant singularities is known from above)
  and then dividing by~\eqref{eq:meanders:dyck-number-of}
  yields~\eqref{eq:meanders-expectation:dyck}.

  For computing the variance, we proceed similarly: we determine the asymptotic behavior of
  the second factorial moment $\E(M_{n,r} (M_{n,r} - 1))$ by extracting the coefficients
  of $\frac{\partial^{2}}{\partial t^{2}} F(z,t,1) |_{t=1}$ and normalizing the result
  by dividing by~\eqref{eq:meanders:dyck-number-of}. Then,
  \eqref{eq:meanders-variance:dyck} follows from
  \[ \V M_{n,r} = \E(M_{n,r}(M_{n,r} - 1)) + \E M_{n,r} - (\E M_{n,r})^{2}. \qedhere \]
\end{proof}
\begin{proposition}\label{prop:meanders:motzkin}
  The expected number of
  $r$-ascents in the \L ukasiewicz meanders of length $n$ associated to $\mathcal{S} =
  \{-1, 0, 1\}$ as
  well as the corresponding variance grow with $n\to\infty$ according to the asymptotic
  expansions
  \begin{multline}\label{eq:meanders-expectation:motzkin}
    \E M_{n,r} = \frac{2^{r}}{3^{r+2}} n + \frac{\sqrt{3 \pi} (r - 4) 2^{r-2}}{3^{r+2}}
    n^{1/2} - (3r^{2} - r - 96) \frac{2^{r-4}}{3^{r+2}} \nopagebreak \\  + \frac{\sqrt{3\pi} (r-4)
      2^{r-6}}{3^{r}} n^{-1/2} + O(n^{-1})
  \end{multline}
  and
  \begin{multline}\label{eq:meanders-variance:motzkin}
    \V M_{n,r} = \frac{3^{r+2} 2^{r+4} - 2^{2r} (3r^{2}(\pi - 2) - 8r(3\pi - 10) +48\pi -
      144)}{16\cdot 3^{2r+4}} n \\ + \frac{\sqrt{3\pi} (72 (r-4) 6^{r} - 2^{2r}(3r^{3} -
      9r^{2} - 28r - 32))}{32\cdot 3^{2r+4}} n^{1/2} + O(1).
  \end{multline}
\end{proposition}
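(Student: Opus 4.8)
The plan is to mirror the proof of Proposition~\ref{prop:meanders:dyck}, exploiting that for $\mathcal{S} = \{-1, 0, 1\}$ everything can be made explicit. First I would record the data for this step set: $\PathGF(u) = u^{-1} + 1 + u$, so $u\PathGF(u) = 1 + u + u^{2}$ is aperiodic ($p = 1$); by Lemma~\ref{lem:tau-1} the structural constant is $\tau = 1$; and $\PathGF(1) = 3$, $\PathGF''(1) = 2$, $\PathGF_{+}(1) = 2$, so $\xi = 1/\PathGF(1) = 1/3$ coincides with the radius of convergence $\rho = 1/\PathGF(\tau)$ of $V(z)$ — this is exactly the degenerate situation in which the simple pole of $F(z,1,1)$ merges with the square-root singularity of $V(z)$. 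The functional equation~\eqref{eq:lukasiewicz-gf-implicit} here reads $V(z) = z(1 + V(z) + V(z)^{2})$, which one solves as $V(z) = \frac{1 - z - \sqrt{1 - 2z - 3z^{2}}}{2z}$; since $1 - 2z - 3z^{2} = (1 - 3z)(1 + z)$, the unique dominant singularity sits at $z = 1/3$, and the next one, at $z = -1$, has strictly larger modulus and contributes only exponentially smaller terms (in particular there is no $(-1)^{n}$ oscillation, in contrast to the Dyck case). A useful simplification at this stage is $V(z) - z = z(V(z) + V(z)^{2}) = zV(z)(1 + V(z))$; together with $\PathGF'(V) = 1 - V^{-2}$ this turns~\eqref{eq:v-diff-1} into the explicit form
\[ V_{t}(z) = -z^{r+1}\,\frac{(1 + V(z))^{r-1}}{V(z) - 1}, \]
and differentiating~\eqref{eq:V-combinatorial-func} twice with respect to $t$ and setting $t = 1$ (where $V_{tt}$ occurs linearly) yields $V_{tt}(z)$ explicitly in terms of $V(z)$ and $z$ as well.

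Next I would count the meanders: setting $v = t = 1$ in~\eqref{eq:lukasiewicz-ascents:p-plus} gives $F(z,1,1) = \frac{1 - V(z)}{1 - 3z}$, which simplifies to
\[ F(z,1,1) = \frac{1}{2z}\myBigl(\sqrt{\frac{1 + z}{1 - 3z}} - 1\myBigr), \]
so the dominant singularity at $z = 1/3$ is of type $(1 - 3z)^{-1/2}$. Singularity analysis (cf.~\cite[Chapter VI]{Flajolet-Sedgewick:ta:analy}) then produces the expansion of $m_{n} = [z^{n}]F(z,1,1)$, whose leading term is a constant multiple of $3^{n} n^{-1/2}$; I would push this expansion far enough (as was done in~\eqref{eq:meanders:dyck-number-of} for the Dyck case), since the later normalization consumes several terms. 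Alternatively $m_{n}$ can be obtained from~\cite[Theorem 4]{Banderier-Flajolet:2002:lat-path}.

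For the moments I would use the two formulas for $\frac{\partial}{\partial t}F(z,t,1)\big|_{t=1}$ and $\frac{\partial^{2}}{\partial t^{2}}F(z,t,1)\big|_{t=1}$ already derived inside the proof of Theorem~\ref{thm:meanders:tau-not-1}, now with $c := \PathGF(1) - 1 = 2$, and substitute the explicit $V(z)$, $V_{t}(z)$, $V_{tt}(z)$ from the first step. This produces explicit generating functions for the unnormalized first and second factorial moments $m_{n}\,\E M_{n,r}$ and $m_{n}\,\E(M_{n,r}(M_{n,r}-1))$. The structural point that makes the final answers linear in $n$ is that $1 - V(1/3) = 1 - \tau = 0$, so the apparent $(1-3z)^{-2}$ (respectively $(1-3z)^{-3}$) behaviour is damped by the factor $1 - V(z)$, which vanishes like $(1-3z)^{1/2}$, leaving dominant exponents $-3/2$ and $-5/2$; after division by $m_{n}$ the expectation then grows linearly (with an $n^{1/2}$ correction) and the variance grows linearly as well. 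Extracting the coefficients by singularity analysis, dividing by the expansion of $m_{n}$, and finally applying $\V M_{n,r} = \E(M_{n,r}(M_{n,r}-1)) + \E M_{n,r} - (\E M_{n,r})^{2}$ yields~\eqref{eq:meanders-expectation:motzkin} and~\eqref{eq:meanders-variance:motzkin}; these are the genuinely heavy computations and would be carried out with the asymptotics package~\cite{Hackl-Heuberger-Krenn:2016:asy-sagemath} in the accompanying worksheet.

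The main obstacle is bookkeeping rather than anything conceptual: one must expand three generating functions (for $m_{n}$, and for the two unnormalized factorial moments) at the merged pole/square-root singularity $z = 1/3$ to one order deeper than the term reported — both because of the $(1-3z)^{1/2}$ cancellation coming from $1 - V(z)$ and because normalization by $m_{n}$ eats lower-order terms — while keeping the dependence on $r$ exact throughout. A secondary technical point is the explicit form of $V_{tt}(z)$: it is determined unambiguously by differentiating~\eqref{eq:V-combinatorial-func} twice in $t$ and solving, but it is bulkier than $V_{t}(z)$, so I would simplify it using $V - z = zV(1 + V)$ and $z\PathGF(V) = 1$ before substituting it into the moment formulas. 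No analytic input beyond Corollary~\ref{cor:V-derivatives} and standard singularity analysis is needed; unlike in Theorem~\ref{thm:meanders:tau-not-1}, no limiting distribution is claimed here, so the singularity-perturbation argument of~\cite[Theorem IX.9]{Flajolet-Sedgewick:ta:analy} is not required.
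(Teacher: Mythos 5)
Your proposal is correct and follows essentially the same route as the paper, which proves this proposition by carrying out the Dyck-case argument of Proposition~\ref{prop:meanders:dyck} with the explicit $V(z) = \frac{1 - z - \sqrt{1 - 2z - 3z^{2}}}{2z}$ and the single dominant singularity at $z = 1/3$; your explicit simplification $V_{t}(z) = -z^{r+1}(1+V(z))^{r-1}/(V(z)-1)$ and your observation that $1 - V(z)$ vanishes like $(1-3z)^{1/2}$ are consistent with \eqref{eq:v-diff-1} and with the structure exploited there. The only detail you add beyond the paper is the correct remark that, unlike the Dyck case, the second singularity at $z=-1$ is non-dominant, so no $(-1)^{n}$ oscillation appears.
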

\begin{proof}
  The asymptotic expansions for expectation and variance can be obtained with an analogous
  approach as in the proof of Proposition~\ref{prop:meanders:dyck}. In this case, we have
  \[ V(z) = \frac{1 - z - \sqrt{1 - 2z - 3z^{2}}}{2z},  \]
  and the dominant singularity of $F(z,1,1)$ (as well as for the corresponding
  derivatives with respect to $t$) is located at $z = 1/3$.
\end{proof}

\bibliographystyle{bibstyle/amsplainurl}
\bibliography{bib/cheub.bib}

\end{document}